\documentclass[11pt]{article}
\usepackage{hyperref}
\usepackage[T1]{fontenc}
\usepackage{amsfonts,eufrak}
\usepackage[english]{babel}
\usepackage{a4wide,times}

\usepackage[latin1]{inputenc}
\usepackage{amssymb}
\usepackage{epsfig}
\usepackage{upgreek}
\usepackage{amsbsy}
\usepackage{verbatim}
\usepackage{color}
\usepackage{mathrsfs}
\usepackage{graphicx}
\usepackage{epstopdf}
\usepackage{makeidx}
\usepackage{amsmath,amsbsy}
\usepackage{mathabx}
\usepackage{amsthm}
\usepackage{fleqn}
\usepackage{xargs}
\usepackage[capitalise]{cleveref}
\usepackage[normalem]{ulem}
\usepackage[prependcaption]{todonotes}
\newcommandx{\fab}[2][1=]{\todo[inline, author={Fab}, linecolor=green,backgroundcolor=red!25,bordercolor=red,#1]{#2}}
\newcommandx{\fabnote}[2][1=]{\todo[author={Fab}, linecolor=red,backgroundcolor=red!25,bordercolor=red,#1]{#2}}

\theoremstyle{plain}
\newtheorem{thm}{Theorem}[section]
\newtheorem{cor}[thm]{Corollary}
\newtheorem{lem}[thm]{Lemma}
\newtheorem{prop}[thm]{Proposition}

\theoremstyle{definition}

\usepackage{dsfont}

\theoremstyle{remark}
\newtheorem{rem}{\bf Remark}[section]
\theoremstyle{remark}

\newtheorem{com*}{\bf Comment}
\usepackage{fancyhdr}

\makeatletter
\def \newequation#1#2{
   \@definecounter{#1}
   \@namedef{the#1}{\hbox{#2}}
   \@namedef{#1}{$$\refstepcounter{#1}}
   \@namedef{end#1}{
      \eqno \csname the#1\endcsname $$\global\@ignoretrue
      }
}
\makeatother
\newequation{E1}{($E_{b,\sigma,W}$)}
   
\makeatletter
\def \newequation#1#2{
   \@definecounter{#1}
   \@namedef{the#1}{\hbox{#2}}
   \@namedef{#1}{$$\refstepcounter{#1}}
   \@namedef{end#1}{
      \eqno \csname the#1\endcsname $$\global\@ignoretrue
      }
   }
\makeatother
\newequation{hyp3}{($\mathcal{H}_{b,\sigma}$)}

\makeatletter
\def \newequation#1#2{
   \@definecounter{#1}
   \@namedef{the#1}{\hbox{#2}}
   \@namedef{#1}{$$\refstepcounter{#1}}
   \@namedef{end#1}{
      \eqno \csname the#1\endcsname $$\global\@ignoretrue
      }
   }
\makeatother
\newequation{shleg}{(ShLeg)}

\makeatletter
\def \newequation#1#2{
   \@definecounter{#1}
   \@namedef{the#1}{\hbox{#2}}
   \@namedef{#1}{$$\refstepcounter{#1}}
   \@namedef{end#1}{
      \eqno \csname the#1\endcsname $$\global\@ignoretrue
      }
   }
\makeatother
\newequation{kl}{(KL)}

\makeatletter
\def \newequation#1#2{
   \@definecounter{#1}
   \@namedef{the#1}{\hbox{#2}}
   \@namedef{#1}{$$\refstepcounter{#1}}
   \@namedef{end#1}{
      \eqno \csname the#1\endcsname $$\global\@ignoretrue
      }
   }
\makeatother
\newequation{haar}{(Haar)}

\newcommand{\ds}{\displaystyle}  
\newcommand{\Cloc}{\ensuremath{\mathcal{C}_{\mathrm{loc}}}}
\def\HFBM{\mathbf{(C_{H,\zeta})}}
\def\ATROIS{\mathbf{C}_{{\rm TV}}(\gamma)}
\def\ATROISbis{(\mathbf{C}_{{\cal W}_1})}
\def\ATROISter{(\mathbf{C}_{{\cal W}_1}')}
\def\AUN{(\mathbf{A}_{\beta,q,\frac{1}{2}})}
\def\AUNgamma{(\mathbf{A}_{\beta,q,\gamma})}

\def\ADEUX{\mathbf{(P)}}
\def\Exp{\mathbf{E}}
\def\ee{e}
\def\proba{\pi}
\def\tzero{{t_0}}
\def\R{{\mathbb{R}}}
\def\ER{{\mathbb{R}}}

\def\N{{\mathbb{N}}}
\def\E{{\mathbf{E}}}

\def\L{{\cal L}}

\def\B{{\cal B}}
\def\P{{\mathbb{P}}}
\def\PE{{\mathbb{P}}}

\def\F{{\cal F}}
\def\W{{\cal W}}

\def\a{\alpha}
\def\b{\beta}
\def\g{\gamma}
\def\d{\delta}

\def\s{\sigma}
\def\ve{\varepsilon}

\def\Mpiq{\mathsf{M}_{\pi,q}}

\def\LSF{{\bf (LSF)}}

\author{Gilles Pag\`es~\thanks{Sorbonne Universit\'e,  Laboratoire de Probabilit\'es, Statistique et Mod\'elisation, UMR~8001, case 158, 4, pl. Jussieu, F-75252 Paris Cedex 5, France. E-mail: \texttt{gilles.pages@sorbonne-universite.fr}}  $\;$and 
Fabien Panloup~\thanks{Univ Angers, CNRS, LAREMA, SFR MATHSTIC, F-49000 Angers, France. E-mail: \texttt{fabien.panloup@univ-angers.fr}}
}
\title{\bf Convergence rate  of the occupation measure of { classes of ergodic processes toward their invariant distribution in mean Wasserstein distance}}

\begin{document}
\maketitle
%\todo{Dans la preuve du Corollaire 2.4, l'quivalence avec (2.17) de Wang utilise que $b$ et $\sigma$ sont Lipschitz : le mentionner}

\begin{abstract}
In~\cite{FG}, Fournier and Guillin obtained some bounds of the $L^p$-mean rate of convergence in Wasserstein distance of empirical distributions for a class of stationary mixing processes. 
In this paper, we propose to extend their strategy of proof and provide general criterions which allow to keep similar rates for a larger class of processes. These results (which do not require regularization techniques) lead to various applications to  occupation measures of ergodic processes which may be not stationary or not Markovian under an assumption of \textit{conditional} convergence to equilibrium in Total Variation or Wasserstein distance. We then provide explicit conditions which lead to these rates for Brownian diffusions and additive SDEs driven by fractional Brownian Motions {or by Gaussian processes with stationary increments}.
\end{abstract}

\textit{Mathematics Subject Classification:} {Primary, 60F25; 60J60; 60G22  Secondary 60E15; 60G10.}
%68W40; 60H30; 62L20; 60J60}

{\textit{Keywords:} Wasserstein distance ; occupation measure, invariant distribution ; total variation distance ; ergodic processes ; mean-reversion ; contraction ; Brownian diffusion ;  SDEs driven by fractional Brownian motion.}
%\end{keywords}

%regularit\'e de $b$ et $\s$ ds le thms principaux \`a v\'erifier\, 
%Gestion de la section Confluence r\'esiduelle:   $\HTVO$, $\HWO$ vs $H^0_{\rho}$, 
%question de $\s$ $d\times q$ (a priori ok) ou $d\times d$.}
%
%\noindent 
%{-- Inserer resultat sur la cvgce forte du schema \`a $\s$ non constant (ne servira pas) et a $\s$ constant qui servira.} 
%\%section{Introduction}
\section{Introduction}

%{2/Questions principales \'evoqu\'ees avec Gilles ce vendredi : Voir \'egalement dans l'article de Wang problème ou point à comprendre concernant la factorisation (dans le cas $q<2$) car on divise par $|.|^{2-q}$...}

%Our aim is to emphasize that the methods developed in~\cite{Dereich13} and~\cite{FG} to provide non-asymptotic bound for the Wasserstein distance ${\cal W}_p(\mu,\nu)$ between two probability measures with finite $p$-moments and more specifically between the occupation measure of a sample of i.i.d. or stationary  $??$-mixing random vectors $L^p$-integrable  can be extended and applied to other fields of application in continuous time to evaluate how fast the occupation measure of an ergodic process converges toward its (marginal) invariant distribution in Markovian and non-markovian frameworks, starting or not from its stationary regime. For other extensions, in connection with discrepancies, Kolmogorov-Smirnov distance and their applications to quasi-Monte Carlo simulation, we refer to another recent contribution~\cite{PanPagDiscrep}. \dots
\medskip
%In this paper we will extend the fields of application of Fournier-Guillin's Theorem~1 in~\cite{FG}) on the $L^p$-mean rate of convergence in Wasserstein distances  in the Strong Law of Large Numbers (SLLN).
In this paper we will extend the fields of application of Fournier-Guillin's Theorem~1 in~\cite{FG}) on the $L^p$-mean rate of convergence in Wasserstein distances  in the Strong Law of Large Numbers (SLLN).

% i.e.  the empirical  measure of a $\nu$-distributed i.i.d. sample of  random vectors to a continuous time setting. 
Our objective is to establish using the same approach similar results in continuous time for various classes of right continuous left limited (a.k.a. c\`adl\`ag) mean-reverting or contracting ergodic processes by estimating the rate of convergence of their occupation measure  which  $a.s.$ converges toward their (marginal) invariant distribution, still in $L^p$-Wasserstein distance. These processes can be Markovian or not, at equilibrium (i.e. stationary) or not. 

% i.e.  the empirical  measure of a $\nu$-distributed i.i.d. sample of  random vectors to a continuous time setting. 

Let us recall that, for a given norm $|\,.\,|$ on $\ER^d$, when $p\ge 1$, the $p$-Wasserstein distance is defined  for every $\mu$, $\nu\!\in{\cal P}_p(\R^d)$, the space of probability distributions on ${\mathcal B}or(\R^d)$ (Borel sets of $\R^d$) having (at least) $p$-finite moments, by
$$
{\cal W}_p^{|\,.\,|}(\mu,\nu)=\inf \left\{ \left[\int_{\R^d} \hskip-0,25cm |x-y|^p\pi(dx,dy)\right]^\frac{1}{p}\!\!,\, \pi \!\in {\cal P}_{\!p}(\R^d\times \R^d), \pi(dx\times\R^d)=\mu, \, \pi( \R^d\times dy)=\nu \right\}\!,
$$
(when $p\,{\ge1}$, {${\cal W}_p^{|\,.\,|}$} is a distance.) In the sequel, we will only write ${\cal W}_p$ to alleviate notation.
It is well-known that ${\cal P}_p(\ER^d)$  equipped with  ${\cal W}_p$ is  a Polish space on which convergence of sequences is characterized by weak convergence combined with convergence of  {$p$-moments}  (see $e.g.$~\cite{Bolley08} or~\cite{Villani} for details). The $p$-Wasserstein distance is now widely used in probabilistic and statistical applications. In statistics, this distance usually produces a robust alternative to Kullback-Leibler divergence taking into account the underlying metric structure. In probability theory, the Wasserstein distance is also widely used for quantifying the rate of convergence to equilibrium or analyzing the robustness of stochastic algorithms.

The starting point   in~\cite{FG} --~which partially relies on ideas from a former work~\cite{Dereich13}~--  is to establish general upper-bounds of the $p$-Wasserstein distance ${\cal W}_p(\mu,\nu)$ between two probability distributions $\mu$ and $\nu\!\in{\cal P}_q(\R^d)$ {for} $q>p$, by means of a telescopic splitting of the two measures based on nested refined partitions of hypercubes.   In~\cite{Dereich13}, the first objective of the authors was an application to the optimal mean rate  random quantization (a new proof of the so-called  Pierce  Lemma)  but the paper  also provides a  (partial) result on the mean $p$-Wasserstein rate of convergence  in the {SLLN}. In~\cite{FG} a complete analysis of this question is carried out. This paper also applies their approach  to evaluate in a discrete  time setting the $p$-Wasserstein convergence rate of the (random) empirical measure $\nu_n(\omega,d\xi):= \frac 1n \sum_{k=0}^{n-1}\delta_{X_k(\omega)}{(d\xi)}$ toward  the marginal invariant distribution  $\nu$  of some classes of stationary mixing sequences.  {Up to a logarithmic term when $p=d/2$, the main result of  \cite{FG} reads
\begin{equation}\label{theo:FGres} \|{\cal W}_p(\nu_n,\nu)\|_p\lesssim n^{-\left(\frac{1}{d}\wedge \frac{1}{2p}\right)},
\end{equation}
where for a random variable $X$, $\|X\|_p=\E[|X|^p]^{\frac{1}{p}}$.} All these approaches are based on the $L^1$ --~or $L^2(\P)$~-- convergence rate of $\nu_n(\omega,A)$ toward $\nu(A)$ along the whole class of hypercubes   $A\subset\R^d$ obtained by dilatation-translation form the unit hypercube.

The main contribution of  this paper is not  to switch to continuous time which is essentially straightforward but rather to emphasize that the field of application of this ``methodology (or ``toolbox")  turns out {to be} much wider than  the striking Theorem~1 on the SLLN in~\cite{FG} {thanks  to several} natural {extensions that we propose} {(see {e.g.}  \cref{prop:fund} and \cref{thm:abstrait})} . {As a consequence, which was our initial objective,} {it is in particular possible to move beyond Poincar\'e or $\rho$-mixing assumptions that restrict applications even within a Markovian framework}. Doing so, we will be able to  apply {this extended approach}  {not only to various} {Markovian but also  non-Markovian} processes sharing some mean-reverting and/or contraction properties (which implies that they have, {at least  in some sense in a non-Markovian framework,} a stationary regime) at {equilibrium but {\it also out of}  equilibrium}.

More precisely, our paper is mainly structured around the following types of general statements:

\noindent  {$\bullet$ For an $({\cal F}_t)$-adapted  process $(X_t)_{t\ge0}$ (not necessarily stationary) with occupation measure{s} $\nu_t(\omega,d\xi)= \frac 1t \int_0^t \delta_{X_s(\omega)}ds$, {$t>0$,} a slight relaxation of the tools of \cite{FG} allows to get some bounds for $\|{\cal W}_p(\nu_t,\nu)\|_p$ as soon as}
%\noindent $\blacktriangleright$ it is possible to \textit{almost} preserve the Wasserstein rates of \eqref{theo:FGres} for the occupation measure $\nu_t(\omega,d\xi)= \frac 1t \int_0^t \delta_{X_s(\omega)}ds$ of an $({\cal F}_t)$-adapted  process $(X_t)_{t\ge0}$ as soon as
{\begin{itemize}
\item[$\blacktriangleright$] the conditional distributions  $({\cal L}(X_t|{\cal F}_s))_{t\ge s}$  converge to equilibrium in total variation\footnote{Wasserstein criterions are also provided.}, uniformly in $s$. 
\item[$\blacktriangleright$] Some appropriate uniform controls hold for the moments of the process.
\end{itemize}
$\bullet$ The bounds for $\|{\cal W}_p(\nu_t,\nu)\|_p$ are mainly governed by the rate of convergence in the first condition. In particular, when the rate is integrable  and the controls of the moments hold for sufficiently large order $q$, then, one \textit{almost} retrieves the rate of  \eqref{theo:FGres}, \textit{i.e.}
\[
\big\| {\cal W}_p(\nu_t,\nu) \big\|_p\le C_{p,d,\varepsilon} t^{-\left(\frac{1}{2p}\wedge\left(\frac{1}{d}-\ve\right)\right)} \quad \mbox{ for every small enough $\ve>0$}.
\]
for some real constant $C_{p,d,\varepsilon}\!\in (0, +\infty)$.}

%The field of applications could be resumed as follows: for a càdlàg process $(X_t)_{t\ge0}$ with 

% with occupation measure $\nu_t(\omega,d\xi)= \frac 1t \int_0^t \delta_{X_s(\omega)}ds$ and invariant distribution $\nu$ (which needs
%{@G: ne devrait-on pas mettre un pseudo-résultat ici ? Par exemple, In this paper, we are going to show that under  \textit{an assumption of conditional convergence to equilibrium} and sufficient control of the moments of the process, we are able to retrieve almost the same rate as \eqref{theo:FGres} for the occupation measure  }

%{@G: Je trouve qu'on digresse un peu trop dans ce qui suit.}
%{This result may for instance allow to replace 

{ These general results then allow to consider many examples  in the class {of}  diffusion processes  solutions to Brownian SDEs and  more generally  to  various classes of Feller Markov processes, at least when the invariant distribution $\nu$ is unique (without Poincar\'e-type assumptions). They may also apply to non-Markovian processes such as SDEs driven by fractional Brownian motion or asymptotically homogeneous Markov processes. Some of these examples will be studied in details in the paper.}

%For instance, typical examples can be found in the class {of}  diffusions processes  solutions to Brownian SDEs and  more generally  to  various classes of Feller Markov processes, at least when the invariant distribution $\nu$ is unique. In fact the  control of the  $\mathcal{W}_p$ -rate of convergence of the occupation measure $\nu_t(\omega,d\xi)= \frac 1t \int_0^t \delta_{X_s(\omega)}ds$ toward $\nu$  {is} important for applications {beyond the case of standard ergodic diffusions}.

 So far, to control such a rate of convergence {in some general settings}, {a usual technique was to regularize} both the occupation measure and $\nu$ by a Gaussian noise (\cite{WangFY24,ChassPag25}) which automatically produce sub-optimal rates due to the exogenous noise introduced into the  problem. Thus, among others, on may mention the analysis of the mean convergence rate of the occupation measure  of a contracting McKean-Vlasov SDEs or  that of its (simulable) time discretization schemes toward its invariant distribution (see e.g.~\cite{DuRenetal23},~\cite{ChassPag24}).  It involves to analyze the mean convergence rate for the underlying stationary Brownian diffusion connected with the MkV which exists as soon as the MkV SDE has a stationary regime.

%Then we can use, thanks to the splitting procedure developed in~\cite{Dereich13} and~\cite{FG}, the rate of convergence to $0$ of  $\|\nu_t(\omega,A)-\nu(A)\|_r$ (with $r=1$ or $2$ in practice) on hypercubes $A$ of $\R^d$ similar to the unit hypercube  to estimate $\big\| \mathcal{W}_p(\nu_t(\omega,dx,\nu)\big)\|_{L^p(\P)}$. 
%{We are going to show that under  a generic assumption of \textit{conditional} convergence to equilibrium} and sufficient control of the moments of the process, these rates can be}

 %we obtain the following quasi-generic estimate {for $p> 0$}:
%\[
%\big\| {\cal W}_p(\nu_t,\nu) \big\|_p\le C_{p,d,\varepsilon} t^{-\left(\frac{1}{2p}\wedge\left(\frac{1}{d}-\ve\right)\right)} \quad \mbox{ for every small enough $\ve>0$}.
%\]
%for some real constant $C_{p,d,\varepsilon}\!\in (0, +\infty)$. In other words

%Thus, in a Markovian setting, under appropriate mean-reverting/contraction and moment assumptions, we obtain the following quasi-generic estimate {for $p> 0$}:
%\[
%\big\| {\cal W}_p(\nu_t,\nu) \big\|_p\le C_{p,d,\varepsilon} t^{-\left(\frac{1}{2p}\wedge\left(\frac{1}{d}-\ve\right)\right)} \quad \mbox{ for every small enough $\ve>0$}.
%\]
%for some real constant $C_{p,d,\varepsilon}\!\in (0, +\infty)$. This rate of decay is faster than those obtained by a regularization approach. 

Before going further, let us note that in a companion paper (see~\cite{PagPanDiscrep}), we propose another field of application of this methodology to compare the rate of convergence with respect to Kolmogorov-Smirnov distances. When the distributions are $[0,1]^d$-supported (one often speaks in such a framework of \textit{discrepancy at the origin}) this yields  the rate of convergence in the Quasi-Monte Carlo Method for functions with finite variation (see~\cite{Nied1992} or~\cite{GilPag2026}).

%This idea is natural for basically two reasons: first the  $K$--$S$ distance relies on comparisons between distributions on hypercubes obtained by dilatation-translation form the unit hypercube, secondly  when dealing with $[0,1]^d$-supported distributions, in particular the uniform one, the $K$--$S$ is simply the so-called {\em discrepancy at the origin} which controls the convergence rate rate in the so-called Quasi-Monte Carlo Method for functions with finite variation in the Hardy \& Krause or measure sense (see~\cite{Nied1992} or~\cite{Pagesbook}).

The paper is organized as follows:  Section~\ref{sec:2} contains our main theoretical results and some  applications. It  is three-fold. First we propose some technical extensions of the main results from~\cite{FG} and to some extent from~\cite{Dereich13} on  upper-{bounding} the  mean $p$-Wasserstein distance  between an occupation measure $\nu_t$ and {a} distribution $\nu$. In particular we prove that we can ``decouple" the measure occupation $\nu_t$ and $\nu$  and the $L^1(\P)$  distributions used to bound the error between these distributions which allows to handle naturally non-stationary settings. We then state a continuous time  abstract general result, Theorem~\ref{thm:abstrait}, inspired by its counterpart in~\cite{FG} but in which stationarity  plays no role {\em a priori}. All the results obtained on classes of processes investigated in the paper finally refer to this theorem. 

In view of more specific applications, several general criterions that follow in this section are drawn from this first  more universal result. Among them, a result {on} stationary Markov processes satisfying a Poincar\'e inequality in \cref{prop:Poincare} (this result is the closest to those established in~\cite{FG}), a still rather general criterion, based on conditional distribution of a right continuous adapted process  (see \cref{thm:possiblynonmarkov}) shows how one can proceed to consider non-stationary processes possibly sharing no Markov property. Nevertheless we first apply it to Markov processes having an invariant distribution but starting from any regime {(see Propositions~\ref{prop:Markovsetting} and~\ref{prop:markovcontraction}  and \cref{cor:Contraction} for criterions)}, based on a convergence of the semi-group in total variation and for $\mathcal{W}_1$ distance (for the last two) respectively to the invariant distribution  respectively. 

In the last subsection we consider applications, first to Brownian diffusions (Theorem~\ref{thm:diffusionsconfluentes} and Corollary~\ref{cor:A tous moments}) or simply weakly mean-reverting (see~Theorem~\ref{prop:weakreverting}) and secondly classes of non-Markovian SDEs driven by fractional Brownian motions {fBm) or generalizations. This last part contains results for general Gaussian driving processes with some given memory properties (and appropriate Lyapunov conditions), then for general equations driven by fBm and finally in the specific case of Ornstein-Uhlenbeck SDEs where explicit computations on the covariance allow to improve the bounds on the conditional rate (see Theorem~\ref{cor:fractrao},  Corollary~\ref{cor2:fracappl} and Theorem~\ref{thm:specou})}. This non-Markovian application also provides a family of applications of our main result  where the rate of \textit{conditional convergence} to equilibrium may be non-integrable. 
  
  {Section~\ref{sec:proofmain} is devoted to the proof of Theorem~\ref{thm:abstrait} and \cref{sec:4bis} to the  proof of Theorem~\ref{thm:possiblynonmarkov} and the more tractable criterions drawn from it. Sections~\ref{sec:5bis} and \ref{sec:6} are respectively devoted to proofs  related to the applications to Brownian diffusions and fractionally driven SDEs.}
\normalsize
\paragraph{Notation}  
\noindent $\bullet$  {$\langle\cdot\,,\, \cdot\rangle$  and} $|\cdot|$  respectively denote the canonical inner product and Euclidean norm on $\R^r$, $r\in \N$.  

\noindent $\bullet$  Let $A\!\in \mathbb{M}_{d,d'}(\R)$ be a matrix with $d$ rows and $d'$ columns and real valued entries. Its Fr\"obenius norm, denoted $\|A\|_{_F}$, is defined by  $\|A\|_{_F}= \big( \sum_{ij}a_{ij}^2\big)^{\frac 12}$.

 \noindent $\bullet$    ${\cal P}_0(\R^d)$  denotes the convex set of probability distributions on $(\R^d, {\cal B}or(\R^d))$. It is usually equipped with the Baire $\sigma$-field $\sigma(\mu\mapsto \int fd\mu, \, f\!\in C_b(\R^d,\R))$ which turns out to be the Borel $\sigma$-field of the (metrizable) weak topology. 
 
\noindent  $\bullet$ For every $p>0$,  
%As the invariant distribution $\nu $ is unique, it follows form  {Let $p\ge 1$.} 
${\cal P}_p(\R^d)$  denotes the  convex subset of ${\cal P}(\R^d)$ of probability distributions  having a finite $p$-moment.

%\noindent $\bullet$   {@Fabien: J'ai enlev\'e la d\'ef de $\mathcal{W}_p$ ici car mise en d\'ebut d'intro. \`A voir}. 
%When $p\ge 1$,  $\W_p$ denotes the $p$-Wasserstein distance  on this set   defined for every $\mu$, $\mu'\!\in {\cal P}_p(\R^d)$ by
%\[
%  \W_p(\mu, \mu') = \inf\bigg\{\Big(\int |\xi-\xi'|^p \pi(d(\xi,\xi'))\Big)^{\frac 1p}, \pi(d\xi\times \R^d)= \mu,\; \pi(\R^d \times d\xi') =\mu'\bigg\}<+\infty.
%\]
%We know (see e.g.~\cite{Bolley08}) that $({\cal P}_p(\R^d), \W_p)$ is a Polish space (i.e. separable and complete) for every $p\ge1$.
%When $p\!\in (0,1)$,  $\W_p$ is still well-defined for distributions on  ${\cal P}_p(\R^d)$, however $\W_p$ is no longer a distance but $\W_p^p$ is, which justifies to provide results in that case.

\noindent $\bullet$   Let $q\!\in (0, +\infty)$. To alleviate notation, we will denote $\|\cdot\|_q:= \|\cdot\|_{L^q(\P)}$ the $L^q(\P) $-norm (or pseudo-norm) with respect to $\P$ defined on the   spaces $L_{\ER^r}^q(\Omega,{\cal A}, \P)$ of random variables or vectors $X:(\Omega,{\cal A})\to \ER^r$ such that $\E\, |X|^q<+\infty$.

\noindent $\bullet$   Let $q>0$. $ \mathbb{L}^q([0, +\infty))$  denotes the set of  $q$-integrable functions $f:[0, +\infty)\to \R$ with respect to the Lebesgue measure i.e. such that $\int_{[0, +\infty)} |f(y)|^qdu<+\infty$. 

\section{Main Results}\label{sec:2}
 \subsection{An abstract extension of Fournier--Guillin's Theorem}
 {In this section, we propose an extended version of the celebrated first Fournier--Guillin Theorem from~\cite{FG}. For that purpose, {we consider $(X_t)_{t\ge 0}$ an  $\R^d$-valued $(\F_t)_{t\ge 0}$-adapted right continuous  process -- hence ${\cal B}or(\R_+)\!\otimes \!{\cal A}$-bi-measurable --  defined  on a  stochastic basis $(\Omega, {\cal A}, (\F_t)_{t\ge 0}, \P)$ satisfying the usual conditions. We define for every $t>0$ and every $\omega\!\in \Omega$ the {\em occupation} measure by} 
\begin{equation}\label{eq:nuempiric}
\nu_t (\omega, d\xi)= \frac 1t \int_0^t \delta_{X_s(\omega)}(d\xi)ds\!\in {\cal P}(\R^d)
\end{equation}
which makes up a family of random distributions satisfying for every $p\ge0$, $t>0$ and  every $A\!\in  \B or(\R^d)$, 
\[
\Omega \ni \omega\mapsto \nu_t(\omega,A)\!\in {\cal P}_p(\R^d) \mbox{ is a $[0,1]$-valued}, {\cal F}_t\mbox{-measurable random variable on } (\Omega, {\cal A}).
\] 
Then, the random measure $\nu_t(\cdot,dx):\Omega\to {\cal P}_0(\R^d)$ is $\F_t$-measurable so that the family is $(\F_t)_{t>0}$-adapted. The same is true when viewing $\nu_t(\omega, d\xi)$ as a ${\cal P}_p(\R^d)$-valued random measure, $p>0$.

{We denote by $\nu\!\in {\cal P}_p(\R^d)$ a probability distribution which \textit{is an attractor} for $(\nu_t)_{t\ge0}$ in a sense made precise below (see \textit{e.g.} $\AUNgamma(i)$). The distribution $\nu$, e.g. in a Markovian framework but not only will appear to be a ($1$-marginal) invariant distribution of the process $(X_t)_{t\ge 0}$ in the sense that $X_t \sim \nu$ for every $t>0$ if $X_0 \sim \nu$.} 

{Now, let us recall} some notations from~\cite{FG} and a proposition which is the combination of Lemma{s}~5 and~6 from~\cite{FG} and  will be the key result on which we rely throughout this (part of) the paper. 

For every integer $\ell\ge 1$ we first introduce  the tiling (or partition) ${\cal P}_{\ell}$  of the hypercube $(-1,1]^d$ by $2^{d\ell}$ translations of the centered hypercube $(-2^{-\ell}, 2^{-\ell}]$.   We also introduce the sequence of $\ell^{\infty}$-rings $({\cal B}_n)_{n\ge 0}$ defined by
\[
\B_0= (-1,1]^d, \quad \B_n= (-2^{n}, 2^{n}]^d \setminus  (-2^{n-1}, 2^{n-1}]^d, \; n\ge 1.
\]
We also borrow from~\cite{FG} the notation $M_{\mu}(q)$ for the $\L^q(\mu)$-moment of a probability  distribution $\mu$   on $(\R^d, {\cal B}or(\R^d))$  w.r.t. to the norm $|\cdot|$, defined by
\[
M_{\mu}(q) = \int_{\R^d}|\xi|^q\mu(d\xi).
\]

Note that, for every $n\ge 1$,  
\begin{equation}\label{eq:q-Moment}
\mu(\B_n)\le \mu\big( \xi: |\xi|_{\infty}\ge 2^{n-1}\big)\le \int_{\R^d}|\xi|^q_\infty\mu(d\xi) 2^{-(n-1)q}\le K_d M_{\mu}(q)2^{-(n-1)q}.
\end{equation}
where $|\cdot|_{\infty}$ denotes the $\ell^{\infty}$-norm and $K_d=\sup_{|\xi|\le 1}|\xi|_\infty$.

The following Proposition, which is the combination of Lemma~5 and~6 from~\cite{FG}, is the key result on which we rely in this paper. 
\begin{prop}[A first general non-asymptotic bound] Let $p\!\in (0, +\infty)$ and let $d\ge1$.  There exists a positive constant $K_{p,d}$ such that for every  $\mu,\nu\!\in{\cal P}_p(\ER^d)$
\begin{equation}\label{eq:W-major}
{\cal W}_p^p(\mu,\nu) \le K_{p,d} \sum_{n\ge 0}2^{pn} \sum_{\ell\ge 0}2^{-p\ell}\sum_{F\in {\cal P}_{\ell}}\big| \mu(2^nF\cap \B_n)Ê- \nu(2^nF\cap \B_n)Ê\big|,
\end{equation}
where $2^nF = \{2^nx, \; x\!\in F\}$. 
\end{prop}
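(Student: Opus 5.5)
We prove the bound in two stages, following Lemmas~5 and~6 of~\cite{FG}: first a bound for measures living on the unit cube $(-1,1]^d$, then a scaling and gluing argument over the rings $\B_n$. Throughout, for $p\in(0,1)$ we work with ${\cal W}_p^p$, the optimal transport cost for $|x-y|^p$. We only use the transport-cost formulation and subadditivity of the optimal cost under sums of sub-probability measures with equal masses.

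\emph{Step 1 (measures on the unit cube).} Let $\mu,\nu$ be probability measures supported in $(-1,1]^d$ (or, more generally, finite measures of equal mass). We claim that
\[
{\cal W}_p^p(\mu,\nu)\le K\sum_{\ell\ge0}2^{-p\ell}\sum_{F\in{\cal P}_\ell}|\mu(F)-\nu(F)|.
\]
To prove it, we build a coupling recursively along the dyadic tree. At level $\ell$, inside each cell $F\in{\cal P}_\ell$, we transport the common mass $\mu(F)\wedge\nu(F)$ as a coupling that matches the children cells at level $\ell+1$. The excess mass $|\mu(F)-\nu(F)|$ is moved across the parent cell, at cost at most $(\mathrm{diam}\,F_{\text{parent}})^p\lesssim 2^{-p\ell}$ per unit mass.

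More concretely, we follow FG's construction:
\[
\mu=\sum_F \frac{\mu(F)\wedge\nu(F)}{\ldots}
\]
Equivalently, the unmatched mass at level $\ell+1$ inside a parent $C\in{\cal P}_\ell$ is $\frac12\sum_{F\subset C}|\mu(F)-\nu(F)|$ minus what was already unmatched at level $\ell$, normalized so the total masses agree. Summing over $\ell$ and using that each unit of mass unmatched first at level $\ell$ travels a distance at most $\lesssim 2^{-\ell}$ gives the claim. Since any coupling bounds ${\cal W}_p^p$, and the cost $|x-y|^p$ is at most $(d^{1/2}2^{1-\ell})^p$ within a level-$\ell$ cell, the constant depends only on $p,d$. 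A cleaner route is the standard telescoping: define $\mu_\ell=\sum_{F\in{\cal P}_\ell}\mu(F)\,\bar\mu_F$, where $\bar\mu_F$ is the normalized restriction of $\mu$ to $F$. Then we compare $\mu$ with $\nu$ through modified measures that agree on the cells of ${\cal P}_\ell$. For $p\ge1$ we use the triangle inequality; for $p<1$ we use the subadditivity of ${\cal W}_p^p$.

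\emph{Step 2 (gluing the rings).} For general $\mu,\nu\in{\cal P}_p(\R^d)$, we write $\mu=\sum_n\mu|_{\B_n}$ and $\nu=\sum_n\nu|_{\B_n}$. The masses $\mu(\B_n)$ and $\nu(\B_n)$ differ, so we transport:
\begin{itemize}
\item the common part $\mu(\B_n)\wedge\nu(\B_n)$ within $\B_n$, via Step~1 rescaled by $2^n$: since $\B_n\subset 2^n(-1,1]^d$, the scaling multiplies costs by $2^{pn}$, giving the terms with $F\in{\cal P}_\ell$, $\ell\ge1$;
\item the excess $|\mu(\B_n)-\nu(\B_n)|$, sent to (or from) a reservoir at the origin, at cost at most $\lesssim 2^{pn}|\mu(\B_n)-\nu(\B_n)|$. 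This is the $\ell=0$ term, since ${\cal P}_0=\{(-1,1]^d\}$.
\end{itemize}
The reservoir is balanced because the total masses are equal, and moving mass between two rings through the origin costs at most $2^{p}$ times the sum of the two radii to the power $p$. When the masses are unequal, Step~1 is applied to $\mu|_{\B_n}$ against a rescaled $\nu|_{\B_n}$. We control the rescaling error by
\[
\sum_F\Big|\nu(F)\tfrac{\mu(\B_n)}{\nu(\B_n)}-\nu(F)\Big|\le|\mu(\B_n)-\nu(\B_n)|,
\]
which in turn is absorbed into the $\ell=0$ term thanks to the factor $\sum_\ell 2^{-p\ell}<\infty$.

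The main obstacle is Step~1's recursive coupling with mass mismatch, specifically keeping the bookkeeping of unmatched mass so that each level contributes only $\sum_F|\mu(F)-\nu(F)|$, rather than a sum over ancestors. I would handle this exactly as FG's Lemma~5: by induction on the depth, using the $p$-cost subadditivity (for $p<1$) or Minkowski's inequality (for $p\ge1$). The rest consists of scaling and summation.
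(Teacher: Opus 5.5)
Your two steps are exactly the route the paper takes: it gives no argument of its own and simply invokes Fournier--Guillin's Lemma~5 (your Step~1 on $(-1,1]^d$) and Lemma~6 (your Step~2: rescale each ring $\B_n$ by $2^{-n}$, apply Step~1 to the common mass $\mu(\B_n)\wedge\nu(\B_n)$, and route the excess $|\mu(\B_n)-\nu(\B_n)|$ through the origin, which gives the $\ell=0$ term). One caution on Step~1: your ``cleaner route'' is vacuous as written, since $\sum_F\mu(F)\bar\mu_F=\mu$, and telescoping with the triangle or Minkowski inequality would not give the stated $2^{-p\ell}$-weighted bound when $p>1$; rely only on the single recursive coupling and the mixture subadditivity of the transport cost, which is valid for every $p>0$ and where the bookkeeping reduces to $(\mu(F)\wedge\nu(F))\sum_{C}\big|\tfrac{\mu(C)}{\mu(F)}-\tfrac{\nu(C)}{\nu(F)}\big|\le\sum_{C}|\mu(C)-\nu(C)|+|\mu(F)-\nu(F)|$ over the children $C$ of $F$, the analogue of the rescaling-error inequality you already use in Step~2.
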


%\subsection{A general result in continuous time} We now state a natural extension to continuous time stochastic processes ofLet $(\Omega, {\cal A}, (\F_t)_{t\ge 0}, \P)$  denote a stochastic basis on which is defined  an homogeneous $\R^d$-valued $(\F_t)_{t\ge 0}$-adapted c\`ad process 
%Throughout the paper, we will denote by $\nu_t$ its occupation measure defined $a.s.$ and for all $t>0$ by:
%
%\begin{equation}\label{eq:nuempiric}
%\nu_t (\omega, d\xi)= \frac 1t \int_0^t \delta_{X_s(\omega)}(d\xi)ds. 
%\end{equation}
%

The following statement  is an extension of  a result originally established in~[Section 3,~\cite{FG}]. Here, we propose in a a continuous-time setting to provide a set of alleviated conditions (see~\cref{rem:replacebymu} for details) which allow to obtain a bound on the expected Wasserstein distance between the occupation measure and its target (invariant distribution in a Markov setting).

\begin{prop}\label{prop:fund} Let $\nu\!\in {\cal P}(\R^d)$ and let $\nu_t(\omega, dx))_{t>0}$ be a ${\cal P}(\R^d)$-valued family of random distributions  defined  on a probability space $(\Omega, {\cal A}, \P)$.
Let $p\!\in (0, +\infty)$  and let $q>p$. Let $\tzero >0$, $\beta,\gamma\!\in (0, \frac 12]$, $K_{\tzero,\b}\!\in (0,+\infty)$ and, for every $t\ge \tzero$,  let  {$\pi=(\proba_t)_{t\ge \tzero}$ be probability distributions such that the following assumption holds true}
% $\nu_t$ satisfies
\begin{equation}\label{eq:Abeta}
\AUNgamma\left\{\begin{array}{ll} (i)&\forall\, t\ge \tzero, \; \forall\, A \!\in {\cal B}or(\R^d),\; \E\, \big| \nu_t(A)-\nu(A)\big| \le K_{\tzero,\b}\min \bigg(\proba_t(A), \frac{\proba_t(A)^{\b}}{t^\gamma}\bigg)\\
(ii) & {\Mpiq}:= {1\vee}\sup_{t\ge t_0}  {M_{\proba_t}(q))}<+\infty.
\end{array}\right.
\end{equation}
%\tcr{F: le $1\vee$ me semble n\'ecessaire car il y a du $M(q)
%\tcr{\tcr
%\tcr{@G: j'ai ajout\'e $q$ et $\mathbf{P}$ en indice, pas certain que ça soit absolument n\'ecessaire de faire apparaître cette d\'ependance, je te laisse d\'ecider (voir par exemple prop. sur Poincar\'e pour te faire une id\'ee.}
Then,  there exists a real constant $K_{\b,p,d,\tzero,\gamma}$ such that,  for every $t\ge \tzero$, 
\begin{equation}\label{eq:UpperW_p}
\E\, {\cal W}_p^p\big(\nu_t, \nu) \le K_{\b,p,d,\tzero,\gamma}{2^q} {\Mpiq}\sum_{n\ge 0} 2^{pn} \sum_{\ell\ge 0} 2^{-p\ell} \min\bigg( 2^{-nq} ,\frac{2^{-nq\b}}{t^\gamma} 2^{d\ell(1-\b)}  \bigg).
\end{equation}
\end{prop}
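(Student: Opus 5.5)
We take expectations in the general bound \eqref{eq:W-major} applied with $\mu=\nu_t(\omega,\cdot)$. This is legitimate by Tonelli, since all terms are nonnegative; if $\nu_t$ has no finite $p$-moment, the right-hand side still bounds ${\cal W}_p^p$, and a truncation or monotone-limit argument handles it. We obtain
\[
\E\,{\cal W}_p^p(\nu_t,\nu)\le K_{p,d}\sum_{n\ge0}2^{pn}\sum_{\ell\ge0}2^{-p\ell}\sum_{F\in{\cal P}_\ell}\E\big|\nu_t(2^nF\cap\B_n)-\nu(2^nF\cap\B_n)\big| .
\]
We then apply $\AUNgamma(i)$ to each set $A=2^nF\cap\B_n$. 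Since $\min(a,b)$ summed over $F$ is at most the minimum of the two separate sums, it suffices to bound, for fixed $n,\ell$,
\[
S_1=\sum_{F\in{\cal P}_\ell}\proba_t(2^nF\cap\B_n)\quad\text{and}\quad S_2=t^{-\gamma}\sum_{F\in{\cal P}_\ell}\proba_t(2^nF\cap\B_n)^{\b}.
\]
This gives the bound $K_{\tzero,\b}\min(S_1,S_2)$ for the inner sum over $F$.

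For $S_1$, the sets $2^nF\cap\B_n$, $F\in{\cal P}_\ell$, partition $\B_n$. Hence $S_1=\proba_t(\B_n)$, and \eqref{eq:q-Moment} gives
\[
S_1\le K_d M_{\proba_t}(q)2^{-(n-1)q}\le K_d2^q\Mpiq 2^{-nq}.
\]
For $n=0$ we simply use $S_1\le 1\le \Mpiq$.

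For $S_2$, we use Hölder (concavity of $x\mapsto x^\b$) over the $2^{d\ell}$ cells:
\[
\sum_F a_F^\b\le (2^{d\ell})^{1-\b}\Big(\sum_F a_F\Big)^\b .
\]
This yields
\[
S_2\le t^{-\gamma}2^{d\ell(1-\b)}\proba_t(\B_n)^\b\le t^{-\gamma}2^{d\ell(1-\b)}(K_d2^q\Mpiq)^\b 2^{-nq\b}.
\]
Since $\Mpiq\ge1$ and $\b\le 1$, we have $(K_d2^q\Mpiq)^\b\le (K_d\vee1)2^q\Mpiq$. This is why the factor $1\vee$ appears in $\Mpiq$.

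Combining the two bounds, the minimum is at most $C\,2^q\Mpiq\min\big(2^{-nq},t^{-\gamma}2^{-nq\b}2^{d\ell(1-\b)}\big)$. Plugging this back in gives \eqref{eq:UpperW_p} with constant $K_{p,d}K_{\tzero,\b}(K_d\vee1)$. This constant depends only on $\b,p,d,\tzero,\gamma$ through $K_{\tzero,\b}$.

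The only delicate points are bookkeeping ones. The first is the interchange of expectation and the infinite sums, handled by Tonelli. The second is to make sure the minimum is taken after summing over $F$, rather than cell by cell with mismatched branches; this is valid because $\sum\min\le\min\sum$. Finiteness of the resulting series is not required by the statement, so no convergence condition on $q$ versus $p$ needs checking here.
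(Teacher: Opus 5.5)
Your proposal is correct and follows the paper's proof essentially step for step. You take expectations in \eqref{eq:W-major}, apply $\AUNgamma(i)$ cell by cell, use $\sum_F\min\le\min\sum_F$, identify $\sum_F\proba_t(2^nF\cap\B_n)=\proba_t(\B_n)$, apply H\"older over the $2^{d\ell}$ cells, and conclude with the tail bound \eqref{eq:q-Moment}. You are in fact slightly more careful than the paper on two points it leaves implicit: the $n=0$ term, and absorbing $(K_d2^q\Mpiq)^\b$ into $(K_d\vee1)2^q\Mpiq$ via $\Mpiq\ge1$.
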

%{@Gilles: j'ai enlevé la dépendance en $q$ mais ça fait apparaître un $2^q$.}
\begin{rem}\label{rem:replacebymu}
$\blacktriangleright$ As we will see later, providing a version of this result involving some general $\beta$ will allow to considerably extend the scope of application of the FG-theorem. As well, the reader can remark that the above result depends on the distribution $\proba_t$ only by its moments. By this relaxation, we will be able to consider dynamics in non-stationary regime.

\noindent $\blacktriangleright$ {The attractive properties of $\nu$ in $\AUNgamma(i)$ imply that $\nu$ is an invariant distribution in the Markovian setting.  However, $\nu$ must be more generally viewed as an attractor since the result may apply to non-Markovian settings such as asymptotically homogeneous dynamics, McKean-Vlasov SDEs of SDEs driven by fractional Brownian fBm}.
%Nevertheless, it can be noted that the above result does not use the invariance property and may be applied in non-Markovian cases, where $\nu$ can be viewed as an attractor (think for instance to asymptotically homogeneous dynamics, to McKean-Vlasov SDEs for instance or to SDEs driven by fBms).{Petite répétition avec ce qui précède.}
\end{rem}

%\tcr{@Gilles : If the RHS involves another distribution $\mu$, the result is still true.}
\noindent{\em Proof.}  It follows from $\AUNgamma$ that
\[
\hspace{-1cm}\sum_{F\in {\cal P}_{\ell}}\Exp\,\big| \nu_t(2^nF\cap \B_n)- \nu(2^nF\cap \B_n)\big|\le \min\bigg( \underbrace{2 \sum_{F\in {\cal P}_{\ell}} \pi_t(2^nF\cap \B_n)}_{= 2\,\pi_t(\B_n)}, \, t^{-\gamma}\sum_{F\in {\cal P}_{\ell}}  \proba_t(2^nF\cap \B_n)^{\b}\bigg).
\]

Let $r= \frac{1}{\b}$, $s= \frac{1}{1-\b}>1$ be two H\"older conjugate exponents. One has 
\begin{align*}
\sum_{F\in {\cal P}_{\ell}}  \proba_t(2^nF\cap \B_n)^{\b} & \le \Big(\sum_{F\in {\cal P}_{\ell}} 1\Big )^{1-\b} \Big( \sum_{F\in {\cal P}_{\ell}} \proba_t(2^nF\cap \B_n)\Big)^{\b}\\
& = 2^{d\ell(1-\b)} \proba_t(\B_n)^\b,
\end{align*}
so that
\[
\sum_{F\in {\cal P}_{\ell}}\Exp\,\big| \nu_t(2^nF\cap \B_n)Ê- \nu(2^nF\cap \B_n)Ê\big|\le \min\bigg(2\proba_t(\B_n), \frac{2^{d\ell(1-\b)} \proba_t(\B_n)^\b}{t^\gamma}\bigg).
\]
 By~\eqref{eq:q-Moment}, $\proba_t(\B_n)\le K_d{\Mpiq}2^{-q(n-1)}$, which yields the announced result.\hfill$\Box$

\bigskip
This yields the following abstract  result whose formulation slightly differs from that in~\cite{FG} since we emphasize the fact that $q$ is a structural variable linked to $\nu_t$ and the choice of $p<q$ is free.
%\tcr{Stabiliser partout la d\'ependance en $q$: met-on une constante qui d\'epend de tous les par. dont $q$...}
\begin{thm}\label{thm:abstrait}
%\dots (cf. proof ci-apr\`es)
Let $\nu$ and $\nu_t$, $t>0$, be as in the previous proposition. Let $p$, $q>0$, $q>p$, be such that $\nu(|\xi|^q)<+\infty$. Assume that $\AUNgamma$ is in force (or simply that~\eqref{eq:UpperW_p} holds true)  for some $\b\!\in (0, \frac 12]$ and some $\tzero>0$. Then there exists a real constant $K= K_{\b,p,{q},d,\tzero}>0$ such that for every $t\ge \tzero$,
\[
\E\, {\cal W}^p_p(\nu_t,\nu) \le K {{\Mpiq}}\left\{ \begin{array}{ll}
			 t^{- \frac{\gamma p}{d(1-\b)}} \mbox{\bf 1}_{\{\frac pq<\frac{d}{d+q}\}}+t^{- \frac{\gamma(q-p)}{q(1-\b)}}\mbox{\bf 1}_{\{\frac{d}{d+q} < \frac pq<1\}}&  \mbox{ if $p<d(1-\b)$},\\
 			% and $\b q<p<q$},\\
			%t^{-\frac 12}&  \mbox{ if $p>d(1-\b)$ and $p<\b q$},\\
			t^{- \gamma }\log(1+t)\mbox{\bf 1}_{\{ \frac pq<\b\}}+t^{- \frac{\gamma(q-p)}{q(1-\b)}}\mbox{\bf 1}_{\{\b < \frac pq<1\}}&  \mbox{ if $p=d(1-\b)$},\\
			% and  $\b q<p<q$},\\
			%t^{- \frac 12}\log(1+t)&  \mbox{ if $p=d(1-\b)$ and  $p<\b q$},\\
			t^{-\gamma}\mbox{\bf 1}_{\{ \frac pq<\b\}}+t^{-\frac{\gamma(q-p)}{q(1-\b)}}\mbox{\bf 1}_{\{\b < \frac pq<1\}}  & \mbox{ if $p>d(1-\b)$}.
			% and $\frac{dq}{d+q}<p<q$},\\
			%t^{- \frac{p}{2d(1-\b)}}&  \mbox{ if $p<d(1-\b)$ and $p<\frac{dq}{d+q}$}.\\
\end{array}\right.
\]
%\tcr{Que penses-tu d'\'ecrire plutôt
%\[
%\E\, {\cal W}_p(\nu_t,\nu)^p \le K \begin{cases} t^{-\frac 12} &  \mbox{ if $p>d(1-\b)$ and $p<\b q$},\\
%t^{- \frac 12}\log(1+t)&  \mbox{ if $p=d(1-\b)$ and  $p<\b q$}\\
%t^{-\Theta(p,q,d,\b)} &\mbox{else,}
%\end{cases}
%\]
%with $\Theta(p,q,d,\b)\in(0,1/2)$ and satisfies $\Theta(p,q,d,\b)=\ldots..$.
%}
%\tcb{@Fabien: pas trop d'accord car l\`a on est au coeur du truc quand m\^eme. En particulier quand $d$ est grand on est dans le dernier cas, celui qui lorsque $\beta = \frac 12$ donne $\| {\cal W}_p(\nu_t,\nu)\|_{L^p} = O(t^{-\frac 1d})$ qui est le r\'esultat que ``les gens'' retiennent. En tout  cas pour moi ce sont les cas $p<\ldots$ qui sont les plus important m\^eme si j'ai raisonn\'e  \`a $p$ d\'ecroissant. En pratique $p=1$ ou $2$ alors que  $d$ peut \^etre grand, surout par les temps qui courent \dots  F\& G adoptent une strat\'egie interm\'ediaire en mariant les 6 cas 2 par 2. Je voulais me d\'emarquer.}
In particular, if $\AUNgamma$ holds for some $q>\frac{p}{\beta}$, then there exists  a finite constant $K=K_{\b,p,d,\tzero}>0$ exists such that

\begin{equation}\label{eq:toutun}
\big\| {\cal W}_p(\nu_t,\nu) \big\|_p=\E[{\cal W}^p_p(\nu_t,\nu)]^{\frac{1}{p}}\le {K}[{{\Mpiq}}]^{\frac 1p}\left\{ \begin{array}{ll}
 t^{-\frac{\gamma}{d(1-\beta)}}&\mbox{ if $p< d(1-\beta)$}\\
 t^{-\frac{\gamma}{p}}(\log(1+t))^{-\frac 1p}&\mbox{ if $p=  d(1-\beta)$}\\

t^{-\frac{\gamma}{p}} &\mbox{ if $p>d(1-\beta)$}.
							
							\end{array}\right.
\end{equation}
%In the sequel,~\eqref{eq:toutun} will be shortly (and slightly abusively) written:
%$$\big\| {\cal W}_p(\nu_t,\nu) \big\|_p\lesssim t^{-\frac{1}{2p}\wedge\left(\frac{1}{d}\right)^{-}}.$$
\end{thm}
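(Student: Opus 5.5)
The plan is to start from the bound \eqref{eq:UpperW_p} of \cref{prop:fund} and estimate the double series
\[
S(t)=\sum_{n\ge 0} 2^{pn}\sum_{\ell\ge 0}2^{-p\ell}\min\Big(2^{-nq},\,t^{-\gamma}2^{-nq\b}2^{d\ell(1-\b)}\Big)
\]
by splitting both sums at well-chosen thresholds. The constant factor $2^q{\Mpiq}$ is kept aside and absorbed into $K{\Mpiq}$.

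First the inner sum in $\ell$ is handled for fixed $n$. The second term of the minimum beats the first exactly when $2^{d\ell(1-\b)}\le t^{\gamma}2^{-nq(1-\b)}$, that is, when $\ell\le \ell_n$ with $2^{\ell_n}\approx \big(t^{\gamma}2^{-nq(1-\b)}\big)^{1/(d(1-\b))}$. Note that $\ell_n\ge 0$ only when $2^{nq(1-\b)}\le t^\gamma$. For $\ell>\ell_n$ the tail is $\sum_{\ell>\ell_n}2^{-p\ell}2^{-nq}\lesssim 2^{-nq}2^{-p\ell_n}$. For $\ell\le\ell_n$ the terms are $t^{-\gamma}2^{-nq\b}\sum_{\ell\le \ell_n}2^{\ell(d(1-\b)-p)}$, and this geometric sum behaves in three ways:
\begin{itemize}
\item if $p>d(1-\b)$, it is $O(1)$;
\item if $p=d(1-\b)$, it is $O(1+\ell_n)$, which gives the $\log(1+t)$;
\item if $p<d(1-\b)$, it is $O(2^{\ell_n(d(1-\b)-p)})$, and this matches the tail term $2^{-nq}2^{-p\ell_n}$.
\end{itemize}
For $n$ with $\ell_n<0$ we simply use the first term, so the inner sum is $\lesssim 2^{-nq}$.

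Next the outer sum in $n$ is split at $n_t$ with $2^{n_tq(1-\b)}\approx t^\gamma$. For $n>n_t$ the contribution is $\sum_{n>n_t}2^{(p-q)n}\lesssim 2^{-(q-p)n_t}=t^{-\gamma(q-p)/(q(1-\b))}$, which uses $q>p$. This is the common second term in all three cases. For $n\le n_t$ the contribution depends on the case.
\begin{itemize}
\item If $p>d(1-\b)$, the sum is $t^{-\gamma}\sum_{n\le n_t}2^{n(p-q\b)}$. This is $O(t^{-\gamma})$ when $p<q\b$; when $p>q\b$ it is dominated by the endpoint and gives $t^{-\gamma}2^{n_t(p-q\b)}=t^{-\gamma(q-p)/(q(1-\b))}$ after a short check of exponents.
\item If $p=d(1-\b)$, there is an additional factor $\ell_n\lesssim \log t$, which yields $t^{-\gamma}\log(1+t)$ when $p<q\b$.
\item If $p<d(1-\b)$, the terms are $2^{pn}2^{-nq}2^{-p\ell_n}=2^{n(p-q)}t^{-\gamma p/(d(1-\b))}2^{nqp/d}$. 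Summing up to $n_t$, the exponent $p-q+qp/d$ is negative exactly when $p/q<d/(d+q)$, which gives $t^{-\gamma p/(d(1-\b))}$. Otherwise the endpoint $n_t$ dominates and gives again $t^{-\gamma(q-p)/(q(1-\b))}$.
\end{itemize}
One checks that, in each regime, the indicator describes which of the two terms dominates, so bounding by their sum is harmless. The equality cases (such as $p/q=\b$) are excluded, as in the statement.

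For \eqref{eq:toutun}, assume $q>p/\b$, so $p/q<\b\le 1/2$. Then, in the case $p<d(1-\b)$, we also have $p/q<d/(d+q)$ for $q$ large, or one can choose $q$ appropriately. The first term then dominates the second because $q$ is large: one checks that $(q-p)/(q(1-\b))\ge p/(d(1-\b))$ whenever $p/q\le d/(d+q)$, and similarly $\ge 1$ compared with $\gamma$ when $p/q<\b$. Taking the $p$-th root gives the rates. The main obstacle is purely bookkeeping: the careful exponent comparison at the threshold $n_t$, and making sure the constants, including the $2^q$, the moment ${\Mpiq}$ and the $\log$ factor coming from $\ell_n\lesssim\log t$, are uniform in $t\ge \tzero$.
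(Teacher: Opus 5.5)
Your two-level splitting is the paper's own route. Your inner $\ell$-sum is what \cref{lem:tech} bounds (with $u=2^{-2n\beta q}$), your $n_t$ is the paper's tipping index $n_0(t)$, and carrying $t^{\gamma}$ from the start replaces the paper's final substitution for $\gamma<\frac12$.

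The gap is in the critical case $p=d(1-\beta)$ with $\beta<p/q<1$, which you do not treat. There $p-q\beta>0$. Plugging your bound $1+\ell_n\lesssim\log t$ into $t^{-\gamma}\sum_{n\le n_t}2^{n(p-q\beta)}(1+\ell_n)$ gives $t^{-\gamma(q-p)/(q(1-\beta))}\log t$, a spurious logarithm that the statement does not contain. The missing observation is that $\ell_n$ is not just $O(\log t)$. From $2^{\ell_n d(1-\beta)}=t^{\gamma}2^{-nq(1-\beta)}=2^{(n_t-n)q(1-\beta)}$ you get $\ell_n=\frac qd(n_t-n)$ up to rounding. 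So $\ell_n$ stays bounded exactly where the weights $2^{n(p-q\beta)}$ concentrate, and
\[
t^{-\gamma}\sum_{0\le n\le n_t}2^{n(p-q\beta)}(1+\ell_n)\lesssim t^{-\gamma}2^{n_t(p-q\beta)}\sum_{k\ge 0}2^{-k(p-q\beta)}(1+k)\lesssim t^{-\frac{\gamma(q-p)}{q(1-\beta)}}.
\]
This is the ``reversed indexation'' step in the paper's proof, and it is what removes the log.

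In the ``in particular'' part, ``$p/q<d/(d+q)$ for $q$ large, or one can choose $q$ appropriately'' is not an argument. Here $q$ is the exponent for which $(\mathbf{A}_{\beta,q,\gamma})$ is assumed, and that assumption only passes to smaller exponents, which makes $p/q<d/(d+q)$ harder, not easier. The hypothesis $q>p/\beta$ suffices by itself. If $p<d(1-\beta)$, then $d-p>d\beta$, so $\frac{dp}{d-p}<\frac p\beta<q$, which is equivalent to $\frac pq<\frac d{d+q}$. When $p\ge d(1-\beta)$, $q>p/\beta$ is literally the condition $p/q<\beta$ of the first sub-case.

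Finally, taking $p$-th roots in the critical case yields $t^{-\gamma/p}(\log(1+t))^{1/p}$. The exponent $-\frac1p$ on the logarithm in \eqref{eq:toutun} is a sign typo in the statement.
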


The proof of this theorem is postponed to Section~\ref{sec:proofmain}. {The constant $K$ may depend on $q$ but not on the distribution unlike ${{\Mpiq}}$. {This explains why we do not hide the constant ${{\Mpiq}}$ in  the constant $K$.} {Note that in this paper where our objective is to offer a very general framework in which these methods can be applied, we have chosen to leave aside the optimization of constants (this would clearly make the discussion too technical).} %\tcr{@Gil: revoir l'ordre ? Faire cro\^itre en $p$ ?}
%\noindent \textcolor{magenta}{@G \&F: 1/ Formuler ainsi comme Fournier-Guillin ou en $\| \, {\cal W}_p(\nu_t,\nu)\|_p$? 
%\\
%2/ L\`a je reproduis exactement la formulation de [FG] mais je pr\'e\`ererais utiliser celle que je prends plus tard dans les propositions o\`u j'encadre $p$ avec des trucs en $q$ et pas le contraire car $q$ est une donn\'ee du pb et $p$ un choix d'objet auquel on s'int\'eressel.

%We begin by recalling a (slightly extended) continous-time version of~\cite[Theorem 15]{FG}. 
In the sequel, $(P_t)_{t\ge0}$ denote{s} a Markov semi-group with invariant distribution $\nu$ and $ {\rm Var}_{\mu}(f)=\|f-\mu(f)\|_{\mathbb{L}^2(\mu)}$ and $$\nu_t(\omega)=\frac{1}{t}\int_0^t \delta_{X_s} ds.$$

\subsection{General criterions: from Poincar\'e to non-stationary non-Markov processes}\label{sec:genecrite}
In this section, we propose to provide general criterions which ensure {$\AUNgamma$} and in turn~\cref{thm:abstrait}.
\subsubsection{Poincar\'e setting} \label{subsec:Poincaree}
In~\cite{FG}, it was shown that such an assumption is fulfilled when Poincar\'e inequality holds. We begin by stating a slightly extended continuous-time version of~\cite[Theorem 15]{FG}.

%The setting is the closest to the discrete time mixing framework considered in~\cite{FG}. This is why we present it as a first illustration of Theorem~\ref{thm:abstrait}.

\begin{prop}[Under Poincar\'e-type Inequality] \label{prop:Poincare} Let $(P_t)_{t\ge 0}$ be a Markov transition semi-group  on $\R^d$ having  $\nu$   a unique stationary distribution $\nu$ such that  $M_\nu(q)=\int|x|^{q}\nu(dx)<+\infty$ for some  $q>1$. Assume that  there exists $\ee\!\in \mathbb{L}^1([0, +\infty))$ such that the following Poincar\'e-type inequalities hold
\begin{equation}\label{eq:VrarPoincare}
\ADEUX\hspace{1cm} \forall \,t\!\in  [0, +\infty),\;\forall\, f \!\in\mathbb{L}^2(\nu),\quad {\rm Var}_{\nu}(P_t f)\le \ee(t) {\rm Var}_{\nu}(f).
 \end{equation}
When $X_0\sim{\nu}$, \emph{i.e.} when  $(X_t)_{t\ge0}$ is under its stationary regime, for every $\tzero>0$, Assumption   $(\mathbf{A}_{\frac{1}{2},q,\frac{1}{2}})$ holds true with $\proba_t=\nu$,  ${\Mpiq}=M_{\nu}(q)$.
Then there exists a real constant $K= K_{\b,p,q,d,\tzero}>0$ such that for every $t\ge \tzero$,
\[
\E\, {\cal W}^p_p(\nu_t,\nu) \le K\left\{ \begin{array}{ll}
			 t^{- \frac{p}{d}} \mbox{\bf 1}_{\{\frac pq<\frac{d}{d+q}\}}+t^{-(1- \frac{p}{q})}\mbox{\bf 1}_{\{\frac{d}{d+q} < \frac pq<1\}}&  \mbox{ if $p<\frac d2$},\\
			t^{- \frac 12}\log(1+t)\mbox{\bf 1}_{\{ \frac pq<\frac 12\}}+t^{-(1- \frac{p}{q})}\mbox{\bf 1}_{\{\frac 12 < \frac pq<1\}}&  \mbox{ if $p=\frac d2$},\\
 			t^{-\frac 12}\mbox{\bf 1}_{\{ \frac pq<\frac 12\}}+t^{-(1- \frac{p}{q})}\mbox{\bf 1}_{\{\frac 1 2< \frac pq<1\}}  & \mbox{ if $p>\frac d2$}.\\
			% and $\b q<p<q$},\\
			%t^{-\frac 12}&  \mbox{ if $p>d(1-\b)$ and $p<\b q$},\\
			
			% and  $\b q<p<q$},\\
			%t^{- \frac 12}\log(1+t)&  \mbox{ if $p=d(1-\b)$ and  $p<\b q$},\\
			
			% and $\frac{dq}{d+q}<p<q$},\\
			%t^{- \frac{p}{2d(1-\b)}}&  \mbox{ if $p<d(1-\b)$ and $p<\frac{dq}{d+q}$}.\\
\end{array}\right.
\]
\end{prop}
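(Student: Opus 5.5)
The plan is to verify $(\mathbf{A}_{\frac12,q,\frac12})$ with $\proba_t=\nu$ and then apply \cref{thm:abstrait} with $\b=\gamma=\frac12$. For $\b=\frac12$ we have $d(1-\b)=\frac d2$ and $\frac{\gamma(q-p)}{q(1-\b)}=1-\frac pq$, so the three regimes of \cref{thm:abstrait} reduce exactly to the displayed ones.

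Condition (ii) is immediate, since $\proba_t=\nu$ gives $\Mpiq=1\vee M_\nu(q)<+\infty$. For condition (i), fix $A\in{\cal B}or(\R^d)$ and set $f=\mathbf 1_A-\nu(A)$. The first term of the minimum is trivial. Stationarity gives $\E\,\nu_t(A)=\nu(A)$, so
\[
\E|\nu_t(A)-\nu(A)|\le \E\,\nu_t(A)+\nu(A)=2\nu(A),
\]
which yields the bound $2\nu(A)$ and hence $K\,\proba_t(A)$ with $K\ge 2$.

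The second term of the minimum is the core of the argument. First use Cauchy--Schwarz:
\[
\E|\nu_t(A)-\nu(A)|\le \Big(\E\Big(\tfrac1t\int_0^t f(X_s)ds\Big)^2\Big)^{1/2}.
\]
Expanding the square and using the Markov property together with stationarity (assuming $X$ is the Markov process with semigroup $(P_t)$ started from $\nu$), we get
\[
\E\Big(\int_0^t f(X_s)ds\Big)^2=2\int_0^t\!\!\int_s^t \E[f(X_s)P_{u-s}f(X_s)]\,du\,ds=2\int_0^t\!\!\int_0^{t-s}\langle f,P_vf\rangle_{\nu}\,dv\,ds.
\]
Since $\nu(f)=0$ and $\nu$ is invariant, $P_vf$ is centered, so Cauchy--Schwarz in $\mathbb L^2(\nu)$ and $\ADEUX$ give
\[
|\langle f,P_vf\rangle_\nu|\le {\rm Var}_\nu(f)^{1/2}{\rm Var}_\nu(P_vf)^{1/2}\le \ee(v)^{1/2}{\rm Var}_\nu(f).
\]
Here I read ${\rm Var}$ as the $\mathbb L^2$-norm, as defined in the paper; if it is instead the squared norm, one gets $\ee(v)$ in place of $\ee(v)^{1/2}$.

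This exposes the main obstacle: we need $\int_0^\infty \ee(v)^{1/2}dv<\infty$, but only $\ee\in\mathbb L^1$ is assumed. The interpretation that works is that ${\rm Var}$ in $\ADEUX$ is the variance (squared norm), giving the factor $\ee(v)$, which is integrable. A semigroup argument also helps: ${\rm Var}_\nu(P_tf)$ is nonincreasing in $t$ (by Jensen and invariance), and $\langle f,P_vf\rangle$ can be handled via $P_v=P_{v/2}P_{v/2}$ when the semigroup is reversible. I would adopt the variance reading, possibly noting the reversibility alternative.

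With this, and using ${\rm Var}_\nu(f)=\nu(A)(1-\nu(A))\le\nu(A)$, we obtain
\[
\E\Big(\tfrac1t\int_0^t f(X_s)ds\Big)^2\le \frac{2\|\ee\|_{\mathbb L^1}\,\nu(A)}{t}.
\]
Taking square roots gives $\E|\nu_t(A)-\nu(A)|\le \sqrt{2\|\ee\|_1}\,\nu(A)^{1/2}t^{-1/2}$ for every $t\ge\tzero$. Setting $K_{\tzero,\frac12}=2\vee\sqrt{2\|\ee\|_1}$ establishes $(\mathbf{A}_{\frac12,q,\frac12})$, and \cref{thm:abstrait} then gives the three displayed rates, with the constant absorbing $M_\nu(q)$.
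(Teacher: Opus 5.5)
Your route is the paper's: expand the second moment and use the Markov property and stationarity to reduce to $\langle f,P_vf\rangle_\nu$. Then apply Cauchy--Schwarz and $\ADEUX$, add the trivial bound $2\nu(A)$, and invoke Theorem~\ref{thm:abstrait} with $\beta=\gamma=\frac12$; your reduction of the three regimes is correct.

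\textbf{The gap.} In the one step that matters, you have the two readings of ${\rm Var}$ swapped. Your display $|\langle f,P_vf\rangle_\nu|\le {\rm Var}_\nu(f)^{1/2}{\rm Var}_\nu(P_vf)^{1/2}\le e(v)^{1/2}{\rm Var}_\nu(f)$ is exactly the \emph{variance} (squared-norm) reading. Under that reading you only get $\int_0^\infty e(v)^{1/2}\,dv$, and $e\in\mathbb{L}^1$ alone does not make this finite (e.g. $e(v)=(1+v)^{-3/2}$). It is the \emph{norm} reading, i.e. the paper's definition ${\rm Var}_\mu(f)=\|f-\mu(f)\|_{\mathbb{L}^2(\mu)}$, that gives
$|\langle f,P_vf\rangle_\nu|\le\|f\|_{\mathbb{L}^2(\nu)}\|P_vf\|_{\mathbb{L}^2(\nu)}\le e(v)\|f\|^2_{\mathbb{L}^2(\nu)}=e(v)\nu(A)(1-\nu(A))$, and hence your bound $2\|e\|_{\mathbb{L}^1}\nu(A)/t$. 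So the interpretation you explicitly adopt is the one under which your argument breaks. Your final inequality is only justified under the reading you discard.

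The paper's own proof is also loose here. It writes ${\rm Var}_\nu(f_A)=\nu(A)(1-\nu(A))$ but then uses $e(u-s)$ rather than $e(u-s)^{1/2}$, which is consistent only with the norm reading of $\ADEUX$. The simplest fix is to adopt that norm reading explicitly.

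\textbf{Fixing the variance reading.} The hints you mention do not suffice. Monotonicity of $t\mapsto\|P_tf-\nu(f)\|_{\mathbb{L}^2(\nu)}$ alone does not make it integrable (think of $1/t$), and reversibility is not assumed. What does work is submultiplicativity, which uses that $\ADEUX$ is uniform in $f$.
\begin{itemize}
\item Let $r(t)$ be the operator norm of $P_t$ on $\{f\in\mathbb{L}^2(\nu):\nu(f)=0\}$; this space is stable under $P_t$ by invariance.
\item Then $r\le 1$ (Jensen and invariance), $r(t+s)\le r(t)r(s)$, and $r(t)\le e(t)^{1/2}$.
\item Since $e\in\mathbb{L}^1$, there is $T>0$ with $e(T)<1$, whence $r(t)\le e(T)^{\lfloor t/T\rfloor/2}$.
\item So $r$ decays exponentially and $\int_0^\infty r(v)\,dv\le T/(1-e(T)^{1/2})<+\infty$. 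This quantity replaces $\|e\|_{\mathbb{L}^1}$ in your constant.
\end{itemize}
With either fix, the rest of your argument goes through exactly as in the paper.
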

\noindent {The proof of this proposition is postponed to Section \ref{sec:Poincarre}.}

\medskip
\begin{rem} $\blacktriangleright$ As already mentioned in~\cite{FG} the  critical sub-cases  -- $\frac q2$ in the  two last cases and $\frac{dq}{d+q}$ in the first one --  could be treated separately and would introduce an additional $\log$ term. This would add still more technicalities for a small benefit.

  \noindent  $\blacktriangleright$ The terminology ``Poincar\'e-type inequality''  certainly follows from the fact that $\ADEUX$ holds if Poincar\'e's inequality holds. Actually (see \emph{e.g.}~\cite[Theorem 4.2.5]{Bakry-Gentil-Ledoux} for background), it is well known that a Poincar\'e inequality  with constant $C$ for $\nu$ is equivalent to $\ADEUX$ with $\ee(t)= e^{-\frac{2 t}{C}}$. 

  \noindent  $\blacktriangleright$ {To ensure $\AUNgamma$ with $\beta=\gamma=1/2$}, it is enough that the inequality of $\ADEUX$ holds for $f=1_{A}$ with $A\in{\cal B}(\ER^d)$. Nevertheless, by a density argument it can be checked that if $\ADEUX$ only holds for every indicator functions $1_A$ of Borel sets, then it also holds for every $f\in \mathbb{L}^2(\nu)$ (see~\cite[p.136]{Bakry-Gentil-Ledoux}).
  %tcr{En gros dans , ils expliquent qu'il suffit que l'inegalit\'e de Poincar\'e (celle sans decroissance expo) soit vraie sur une alg\`ebre ${\cal A}_0$ dense dans ${\mathbb{L}}^2(\nu)$, voir p.136\ldots)}

  \noindent $\blacktriangleright$ Assumption $\ADEUX$ being true as soon as $(\ee(t))_{t\ge0}$ is integrable on $[0, +\infty)$, it may extend to  more general settings than those related to classical Poincar\'e inequality. Nevertheless, it requires that the right-hand side is ``proportional'' to ${\rm Var}_{\nu}(f)$, which may be difficult to check in practice. 
  \end{rem}

\noindent \textbf{Example.}  It is well-known that the above  Poincar\'e inequalities $\ADEUX$  hold ({with $\ee(t)=e^{-\rho t}$, $\rho>0$}) for  $\R^d$-valued  Langevin diffusions reading
\[
dX_t = -\nabla U(x_t)dt + \sqrt{2}\sigma d W_t
\]
where $\s>0$ and $U\!\in {\cal C}^2(\R^d, \R)$ is a coercive convex function (see \textit{e.g.}~\cite{bobkovledoux97}). By a perturbation argument (see~\cite[Proposition 4.2.7]{Bakry-Gentil-Ledoux}), it is even true if $U$ is only convex outside a ball $B(0,M)$ for a given $M>0$  (as a compact perturbation of a convex function).
%(see also~\cite[Remark 3.2]{cattiauxguillin2014}). 
  In this case,  $\ADEUX$ holds true.

\subsubsection{A general criterion} \label{subsec:2.2.2}
From now on,  $\nu_t$ will denote the occupation  measure at time $t>0$ associated to a right continuous process $(X_t)_{t\ge 0}$ as defined in~\eqref{eq:nuempiric} and we denote by $\bar \nu_t$ the \textit{mean occupation measure} of $\nu_t$, defined for every $t>0$ by
\begin{equation}\label{eq:mut}
\bar \nu_t(f)=\E \,\nu_t(f) =\frac{1}{t}\int_0^t \E f(X_s) ds.
\end{equation}

%\tcr{A revoir avec le par. $\gamma$} 
We establish a general result that covers the Markov setting and beyond to  in which Assumption $\AUNgamma$ is satisfied with  various values of ${\gamma},\beta\!\in (0,\frac 12]$. Then we derive several criterions which are easier to fulfill in practice for continuous time Markov processes. Basically we start from assumptions of $TV$-convergence of conditional distributions of the process $(X_t)_{t\ge 0}$   toward the distribution $\nu$ under consideration. In a  homogeneous Markov setting this can be read on its  transition semi-group. As we want to establish some convergence rates in Wasserstein distance we also provide criterions based on Wasserstein distance. We also consider situations where the process $(X_t)_{t\ge 0}$ is not Markovian. 

% abstract focus on the case where a convergence in total variation holds towards $\nu$. In view of applications to fractional driven diffusions (or to other non-Markovian settings such as asymptotically homogeneous Markov processes for instance), we provide here a result which does not require the Markov property. 
 %and we adopt the convention $q/(q-1)=+\infty$ when $q=1$.
%do not necessarily assume that 
%$(X_t)_{t\ge0}$ is a Markov process. D
%In the proposition, we again assume that $(P_t)_{t\ge0}$ is a semi-group ({pas forc\'ement Feller ??}).
%{Proposition g\'en\'eralisation permettant de couvrir toutes les d\'ecroissances. Finalement, l'application au fBm se fait avec ce critère.}
\begin{thm}\label{thm:possiblynonmarkov} Let $(X_t)_{t\ge0}$ denote an $({\cal F}_t)_{t\ge0}$-adapted right continuous  process with values in $\ER^d$. Let $q>1$, $\gamma\in(0,1/2]$, {$\nu\in{\cal P}(\ER^d)$},  and a bounded function $e:\ER_+\rightarrow\ER_+$ such that
%such that some $c>0$ and  
%$e\!\in \mathbb{L}^1([0, +\infty),du)$ such that
\begin{equation}\label{eq:ATROIS}
\ATROIS\quad\begin{cases} (i)\;  \|{\cal L}(X_{t}|{\cal F}_s)-\nu\|_{TV}\le \Upsilon_{t,s} \,\ee(t-s)\;\mbox{  for all $s$, $t\!\in \R_+,\;0\le s\le t$},\\
(ii)\; \int_0^t e(s)ds \le O( t^{1-2\gamma})<+\infty \mbox{ as }t\to+\infty,  \\
(iii)\; C_q=\sup_{t\ge 0}\big(\sup_{s{\le}  t} \E\, \Upsilon_{t,s}^{q}+{\sup_{t\ge0}\E\,|X_t|^{q}}\big)<+\infty,
\end{cases}
\end{equation}
where, for  every $s\ge 0$, $\big(\Upsilon_{t,s}(\omega))_{ \omegaÊ\in \Omega, t\ge s}$ denotes a  family of  ${\cal B}or\big([s,+\infty)\big)\otimes{\cal F}_s$-measurable   non-negative random variables. 

Then $\nu(|\cdot|^q)<+\infty$  and
%
% with $\Upsilon_{t,s}$ ${\cal F}_t$-measurable $\Upsilon_{s,t}$ is a non-negative random variable satisfying
%$$\sup_{(s,t)\in[0, +\infty)^2, 0\le s\le t} \E \Upsilon_{s,t}^{\frac{\rho}{\rho-1}}=C<+\infty.$$
$\AUNgamma$ holds  for every $\tzero>0$ with $\beta=\beta(q)=\frac 12 (1-\frac{1}{q})$, $\pi_t=\frac{1}{2}(\bar \nu_t+\nu)$. Thus, 
for every $\tzero>0$,  there exists a positive constant $K= K_{b,\s,p,d,t_0}$ such that, for every $t\ge \tzero$, 
%(\tcb{@Gil: temporaire a inclure dans la preuve... the assumptions of Theorem~\ref{thm:abstrait} are satisfied with  $\pi_t= \frac 12 (\mu +\nu)$, 
%$\b= \frac 12(1-\frac 1q)$ and aby $\tzero>0$.
%{R\'eintroduire le $\tzero$ et l'in\'egalit\'e de BEL!!! \dots et son hypoth\`ses d'elliptict\'e} ... 
%pour la condition $P_t(f)$ Lipschitz!!!!}. For every $t_0>0$,  there exist a constant $K=K_{p,q,d,q,t_0}>0$ such that, for every $t\ge t_0$,

%{Version avec $\gamma$ (ancienne version en commentaire}
\begin{equation}\label{eq:boundsabstrait}
\E\, {\cal W}^p_p(\nu_t,\nu) \le K {\Mpiq}\left\{ \begin{array}{ll}
		t^{-\frac{{2\gamma} pq}{(q+1)d}}\mbox{\bf 1}_{\{p< \frac{dq}{d+q}\}}+t^{-\frac{{2\gamma}(q-p)}{q+1}}\mbox{\bf 1}_{\{  \frac{dq}{q+1}<p<q\}} &\mbox{if  $\,p< \frac{d}{2}(1+\frac 1q)$},\\
		t^{-\frac{{2\gamma}(q-p)}{q+1}} \mbox{\bf 1}_{\{d_-<q<d_+\}} + t^{-{\gamma}}\log(1+t) \mbox{\bf 1}_{\{d_+<q\}}& \mbox{if  $\,p= \frac{d}{2}(1+\frac 1q)$},\\
		 t^{-{\gamma}}\mbox{\bf 1}_{\{\frac{d}{2}\frac{q+1}{q}<p<\frac{q-1}{2}\}}+t^{-\frac{{2\gamma}(q-p)}{q+1}}\mbox{\bf 1}_{\{\frac{q-1}{2}<p<q\}}	&\mbox{if  $\,p>\frac{d}{2}(1+\frac 1q)$,}
		% and $\frac{q-1}{2}<p<q$ (with $q>\frac{d+1+\sqrt{(d+1)^2+4d}}{2}$)},\\
 		%t^{-\frac12}	   &\mbox{if  $\,p>\frac{d}{2}\frac{q+1}{q}$ and $0<p<\frac{q-1}{2}$},\\	
		
		%    and  $\frac{d+\sqrt{d^2+8d}}{2}<q<\frac{d+1+\sqrt{(d+1)^2+4d}}{2}$},\\	
		%t^{-\frac12}\log(1+t)	&\mbox{if  $\,p= \frac{d}{2}\frac{q+1}{q}$ and $\frac{d+\sqrt{d^2+8d}}{2}<q<\frac{d+1+\sqrt{(d+1)^2+4d}}{2}$},\\	
		
		% and $\,\frac{dq}{d+q}<p<q$},\\
		%t^{-\frac{pq}{(q+1)d}}&  \mbox{if $\,p<\frac{d}{2}\frac{q+1}{q}$ and $p< \frac{dq}{d+q}$}.			 	 
			\end{array}\right.
\end{equation}
where $d_-:= \frac{d+\sqrt{d^2+8d}}{4}<d_+ := \frac{d+1+\sqrt{(d+1)^2+4d}}{2}$ and  ${\Mpiq}$ satisfies ${\Mpiq} \le 1\vee \sup_{t\ge t_0}\E\,|X_t|^{q}$. 
%\begin{equation}\label{eq:boundsabstrait}
%\E\, {\cal W}^p_p(\nu_t,\nu) \le K M(q)\left\{ \begin{array}{ll}
%		t^{-\frac{pq}{(q+1)d}}\mbox{\bf 1}_{\{p< \frac{dq}{d+q}\}}+t^{-\frac{q-p}{q+1}}\mbox{\bf 1}_{\{  \frac{dq}{q+1}<p<q\}} &\mbox{if  $\,p< \frac{d}{2}(1+\frac 1q)$},\\
%		t^{-\frac{q-p}{q+1}} \mbox{\bf 1}_{\{d_-<q<d_+\}} + t^{-\frac12}\log(1+t) \mbox{\bf 1}_{\{d_+<q\}}& \mbox{if  $\,p= \frac{d}{2}(1+\frac 1q)$},\\
%		 t^{-\frac12}\mbox{\bf 1}_{\{\frac{d}{2}\frac{q+1}{q}<p<\frac{q-1}{2}\}}+t^{-\frac{q-p}{q+1}}\mbox{\bf 1}_{\{\frac{q-1}{2}<p<q\}}	&\mbox{if  $\,p>\frac{d}{2}(1+\frac 1q)$,}
%		% and $\frac{q-1}{2}<p<q$ (with $q>\frac{d+1+\sqrt{(d+1)^2+4d}}{2}$)},\\
% 		%t^{-\frac12}	   &\mbox{if  $\,p>\frac{d}{2}\frac{q+1}{q}$ and $0<p<\frac{q-1}{2}$},\\	
%		
%		%    and  $\frac{d+\sqrt{d^2+8d}}{2}<q<\frac{d+1+\sqrt{(d+1)^2+4d}}{2}$},\\	
%		%t^{-\frac12}\log(1+t)	&\mbox{if  $\,p= \frac{d}{2}\frac{q+1}{q}$ and $\frac{d+\sqrt{d^2+8d}}{2}<q<\frac{d+1+\sqrt{(d+1)^2+4d}}{2}$},\\	
%		
%		% and $\,\frac{dq}{d+q}<p<q$},\\
%		%t^{-\frac{pq}{(q+1)d}}&  \mbox{if $\,p<\frac{d}{2}\frac{q+1}{q}$ and $p< \frac{dq}{d+q}$}.			 	 
%			\end{array}\right.
%\end{equation}
%where $d_-:= \frac{d+\sqrt{d^2+8d}}{4}<d_+ := \frac{d+1+\sqrt{(d+1)^2+4d}}{2}$ and  $M(q)$ satisfies $M(q) \le \sup_{t\ge t_0}\E\,|X_t|^{q}$.

 In particular, if $C_q<+\infty$ for any $q>0$, then for every $d\in\mathbb{N}^*$, $p>0$ and $\varepsilon>0$, a finite positive constant $\bar{K}= \bar{K}_{p,\ve}$ exists such that
\begin{equation}\label{eq:toutunbis}
\big\| {\cal W}_p(\nu_t,\nu) \big\|_p\le \bar{K}     
							\left\{ \begin{array}{ll}
 t^{-\frac{{2\gamma}}{d}(1-\ve)}&\mbox{ if $p\le \frac d2$}\\
t^{-\frac{{\gamma}}{p}} &\mbox{ if $p>\frac d2$}.
							\end{array}\right.
\end{equation}
In the sequel,~\eqref{eq:toutunbis} will be shortly (and slightly abusively) written:
$$
\big\| {\cal W}_p(\nu_t,\nu) \big\|_p\lesssim t^{-{2\gamma}\left(\frac{1}{2p}\wedge\left(\frac{1}{d}\right)^{-}\right)}.
$$
% and some (finite) constants $K$ and $M(q)$ depending (only) on  $q$, $C$ and $\|e\|_{\mathbb{L}^1}$.
% ({voir pour valeur de $t_0$}).
\end{thm}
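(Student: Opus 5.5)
The plan is to verify $\AUNgamma$ with $\beta=\frac12(1-\frac1q)$ and $\pi_t=\frac12(\bar\nu_t+\nu)$, and then read off \eqref{eq:boundsabstrait} from \cref{thm:abstrait}.

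\textbf{Moments of $\nu$.} First I would show $\nu(|\cdot|^q)<+\infty$. By $\ATROIS(i)$ with $s=0$, $\E\big[\|{\cal L}(X_t|{\cal F}_0)-\nu\|_{TV}\big]\le \E\,\Upsilon_{t,0}\,e(t)$. If $e(t)\to0$ along a sequence, this gives ${\cal L}(X_t)\to\nu$ in total variation. That $e$ is small on average follows from $(ii)$, since $\int_0^t e = O(t^{1-2\gamma})=o(t)$ when $\gamma>0$. Alternatively one can average: $\|\bar\nu_t-\nu\|_{TV}\le \frac1t\int_0^t \E\,\Upsilon_{s,0}\,e(s)\,ds\to0$. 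Weak convergence, Fatou and $\sup_t\E|X_t|^q<\infty$ then give $M_\nu(q)<\infty$. Hence $\Mpiq\le 1\vee\frac12\big(\sup_s\E|X_s|^q+M_\nu(q)\big)$, which is $(ii)$ of $\AUNgamma$.

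\textbf{Variance estimate for $(i)$.} Fix a Borel set $A$ and set $f=\mathbf 1_A-\nu(A)$. The trivial part of the minimum comes from
\[
\E|\nu_t(A)-\nu(A)|\le \bar\nu_t(A)+\nu(A)=2\pi_t(A).
\]
For the other part I would bound
\[
\E|\nu_t(A)-\nu(A)|\le \big(\E\,\nu_t(f)^2\big)^{1/2}, \qquad
\E\,\nu_t(f)^2=\frac{2}{t^2}\int_0^t\!\!\int_s^t \E\big[f(X_s)\,\E[f(X_u)|{\cal F}_s]\big]\,du\,ds .
\]
Since $|f|\le1$, the conditional expectation satisfies $|\E[f(X_u)|{\cal F}_s]|\le \|{\cal L}(X_u|{\cal F}_s)-\nu\|_{TV}\le \Upsilon_{u,s}\,e(u-s)$ (up to the TV normalization constant). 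Also $|f(X_s)|\le \mathbf 1_A(X_s)+\nu(A)$. Applying H\"older with exponents $q$ and $q'=\frac q{q-1}$ gives
\[
\E\big[|f(X_s)|\,\Upsilon_{u,s}\big]\le \big(\E|f(X_s)|^{q'}\big)^{1/q'}C_q^{1/q}\lesssim \big(\P(X_s\in A)+\nu(A)\big)^{1-\frac1q}.
\]
Integrating in $u$ and using $(ii)$ (with $e$ bounded, so $\int_0^{t}e\le Ct^{1-2\gamma}$ for all $t\ge t_0$) yields
\[
\E\,\nu_t(f)^2\lesssim \frac{t^{1-2\gamma}}{t^2}\int_0^t\big(\P(X_s\in A)+\nu(A)\big)^{1-\frac1q}ds\le t^{-2\gamma}\,\big(2\pi_t(A)\big)^{1-\frac1q},
\]
where the last step is Jensen's inequality (concavity of $x\mapsto x^{1-1/q}$). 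Taking square roots gives $t^{-\gamma}\pi_t(A)^{\beta}$ with $\beta=\frac12(1-\frac1q)$, which is $\AUNgamma(i)$.

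\textbf{Applying \cref{thm:abstrait}.} With $1-\beta=\frac{q+1}{2q}$, the exponents become $\frac{\gamma p}{d(1-\beta)}=\frac{2\gamma pq}{(q+1)d}$ and $\frac{\gamma(q-p)}{q(1-\beta)}=\frac{2\gamma(q-p)}{q+1}$, and $p\lessgtr d(1-\beta)$ reads $p\lessgtr\frac d2(1+\frac1q)$. The condition $\frac pq<\beta$ becomes $p<\frac{q-1}2$. In the critical case, the constraints $\frac d2\frac{q+1}q<\frac{q-1}2$ and $\frac d2\frac{q+1}q<q$ produce the roots $d_\pm$. For \eqref{eq:toutunbis}, let $q\to\infty$, so $\beta\to\frac12$, and take $q$ large enough depending on $\ve$.

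\textbf{Main obstacle.} The delicate step is the variance estimate. It requires handling the non-stationary, non-Markov conditioning correctly, pairing H\"older with the random $\Upsilon$ to get exactly the power $\beta(q)$, and checking the measurability needed for Fubini. The algebraic bookkeeping of the cases, in particular $d_\pm$, is routine but tedious.
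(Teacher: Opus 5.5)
Your proposal is correct and follows essentially the same route as the paper. It uses the trivial bound $2\pi_t(A)$, the second-moment expansion conditioned on ${\cal F}_s$, the TV bound $\Upsilon_{u,s}\,e(u-s)$, H\"older with exponents $(q,\frac{q}{q-1})$ together with $|f|^{q/(q-1)}\le \mathbf 1_A+\nu(A)$, and Jensen in the time average to reach $t^{-\gamma}\pi_t(A)^{\frac12(1-\frac1q)}$, followed by the same parameter translation in \cref{thm:abstrait}. The only cosmetic difference is in proving $\nu(|\cdot|^q)<+\infty$: you use $\|\bar\nu_t-\nu\|_{TV}\to0$ plus lower semicontinuity, while the paper runs the same argument explicitly through the truncations $|\cdot|^q\wedge M$.
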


 The proof of this theorem is postponed to~\cref{subsec:proofthm2.5}}.
%{pour \bar{K}, je ne mets pas les d\'ependances car si je mets $\varepsilon$ en +, ça veut dire que 
\smallskip
\begin{rem}\label{rq:allevatrois} $\blacktriangleright$ Following carefully the proof, one checks  that     $\ATROIS(ii)$-$(iii)$ can be replaced by:
\begin{equation}\label{eq:allevatrois}
\max\Big(\underbrace{\sup_{t\ge 0}t^{2\g -1}\sup_{0\le u\le t}\int_0^{t} { \| \Upsilon_{(u+s)\wedge t,u}\|_q }\ee(s) ds}_{C_{1,q}},\underbrace{\sup_{t>0}\frac{1}{t}\int_0^t \E |X_s|^{q} ds}_{C_{2,q}}\Big)<+\infty.
\end{equation}
{This remark will be useful for \cref{prop:weakreverting} where one considers weak mean-reverting drifts. Actually, in these cases, one may be able to prove that $C_{2,q}<+\infty$ but not that $C_q<+\infty$ (precisely when the parameter $a$ of this theorem is strictly lower than $1$).}
%\tcb{@Fabien:  New assumption qui me semble plus naturelle dans un cadre de $L^q$-bornitude. Ceci dit comme,  le plus vraissemblable est que c'est born\'e (car TV) et au pire \`a croissance lin\'eaire  dans la cas contractif}. \tcb{J'ai chang\'e $C_{1,q}$ en 
%\[
%C_{1,q} :=\sup_{t\ge 0}\int_0^{+\infty}  \|\Upsilon_{t+s,t}\|_{q}\ee(s) ds <+\infty
%\]
%pour avoir $\beta = \frac 12(1-\frac 1q)$ qui tend vers $\frac 12$ quand $q\to+\infty$.} 
%J'ai supprim\'e $t_0$ de l'\'enonc\'e $t\ge 0$ ou $t\ge t_0$. ici $t_0$ n' a gu\`ere de sens et comme $(X_t)$ est au moins c\`ad  je pense 
%que  $\frac{1}{t}\int_0^t \E |X_s|^{q}ds$ reste born\'e au voisinage de de $0$ car tend  vers  $\E |X_0|^q$.}

\noindent $\blacktriangleright$ The constant $K_{t_0,\b}$ involved in $\AUNgamma(i) $ is  $K_{t_0,\b}=2(1\vee \sqrt{C_{1,q}})$ and one has ${\Mpiq}\le  C_{2,q}\le C<+\infty$ (see~\eqref{eq:constanteexplca2} in the proof and the lines that follow).

\noindent  $\blacktriangleright$ Note that $q>d_-$ is a necessary condition on $q$ to satisfy $q> \frac{d}{2}(1+\frac 1q)$ i.e. to ensure that the above last two cases in~\eqref{eq:boundsabstrait} are not   empty. Note that when $d=1$, $d_-=1$ and $d_+= 1+\sqrt{2}$.

\noindent $\blacktriangleright$ {In most situations, we will consider situations where $\gamma=1/2$, \emph{i.e.} where $\ee\!\in \mathbb{L}^1([0, +\infty))$. The case $\gamma<1/2$ will be useful for fractional SDEs.}
%{@ F\& G: checker car j'ai chang\'e des trucs}.
\end{rem}
\subsubsection{Criterions for (possibly non stationary) Markov processes}
In this section, we state consequences of~\cref{thm:possiblynonmarkov}  in the setting of  ergodic (but not necessary stationary) homogeneous Markov processes without the use of Poincar\'e inequality. {The proofs of this section are postponed to \cref{sec:proof4.2}}.

\noindent \textbf{$TV$ criterion.} We first rephrase $\ATROIS$ in this setting {with $\gamma=1/2$}. This yields:
\begin{prop}[Markov setting] \label{prop:Markovsetting}
Let $(P_t)_{t\ge0}$ denote a  Markov transition semi-group defined on bounded $\R$-valued Borel functions defined on $\R^d$. Let $\mu_0\!\in {\cal P}_q(\R^d)$ for some $q> 1$ 
and 
% which admits a unique invariant distribution $\nu$.  
%Assume that $X_0\sim \mu_0$ (such that $\E |X_0|^q<+\infty$). 
let $\psi:\ER\rightarrow[0, +\infty)$ and $\ee\!\in \mathbb{L}^1([0, +\infty))$  be  Borel functions such that,
\begin{equation}\label{eq:TVboundby}
\begin{cases}
(i)& \;\forall t\ge0,\forall\, x \!\in \R^d, \quad \|P_t(x,dy)-\nu\|_{TV}\le \psi(x)\ee(t),\\
(ii)& \;\sup_{t\ge 0} \mu_0P_t \left(\psi^{q}+ |\cdot|^{q}\right)<+\infty.
\end{cases}
\end{equation}
Then, $\nu$ is the unique invariant distribution  of  $(P_t)_{t\ge 0}$, $\nu\!\in {\cal P}_q(\R^d)$  and  $\AUN$ holds true for every $t_0>0$  with $\beta=\frac 12 (1-\frac{1}{q})$, $\pi_t=\frac{1}{2}(\bar \nu_t+\nu)$ so that the bounds~\eqref{eq:boundsabstrait} of~\cref{thm:possiblynonmarkov} are in force.
%.so that all conclusions of~\cref{thm:possiblynonmarkov}   are satisfied by any homogeneous Markov process $(X_t)_{t\ge 0}$ with transition $(P_t)_{t\ge 0}$ such that $X_0\stackrel{d}{\sim} \mu_0$.
\end{prop}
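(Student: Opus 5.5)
The plan is to reduce Proposition~\ref{prop:Markovsetting} to \cref{thm:possiblynonmarkov} with $\gamma=\frac12$. We work on the canonical setting: $(X_t)_{t\ge0}$ is a right continuous homogeneous Markov process with semi-group $(P_t)_{t\ge0}$, adapted to its natural (augmented) filtration $(\F_t)_{t\ge0}$, and $X_0\sim\mu_0$.

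\textbf{Identification of $\nu$.} First we show that $\nu$ is invariant and unique.
\begin{itemize}
\item For bounded Borel $f$, condition (i) gives $|P_tf(x)-\nu(f)|\le \|f\|_\infty\psi(x)\ee(t)$.
\item Since $\ee\in\mathbb{L}^1$, we cannot expect $\ee(t)\to0$ pointwise. It does, however, tend to $0$ along a sequence $t_n\to+\infty$, or in Ces\`aro mean: $\frac1T\int_0^T\ee\to0$.
\item By the semi-group property, $\nu P_s(f)=\lim \nu(P_{t_n+s}f)$. More carefully, apply (i) to $P_sf$ and to $f$ at times $t$ and $t+s$, integrated against an auxiliary distribution with $\psi$ integrable; $\mu_0$ is such a distribution by (ii). This gives $\mu_0P_{t+s}f-\nu(f)\to0$ and $\mu_0P_t(P_sf)-\nu(P_sf)\to0$ in Ces\`aro mean in $t$. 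Hence $\nu(P_sf)=\nu(f)$.
\item Uniqueness: any invariant $\mu$ with $\mu(\psi)<\infty$ satisfies $\mu=\mu P_t\to\nu$ in Ces\`aro mean. For a general invariant $\mu$, truncate $\psi$ on $\{\psi\le M\}$ and use $\|\cdot\|_{TV}\le 2$, which gives $|\mu(f)-\nu(f)|\le 2\|f\|_\infty\mu(\psi>M)+\|f\|_\infty M\,\ee$-mean. Letting $M\to\infty$ yields $\mu=\nu$.
\end{itemize}

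\textbf{Verification of $\ATROIS$.} By the Markov property, $\mathcal{L}(X_t|\F_s)=P_{t-s}(X_s,\cdot)$. Hence (i) holds with
\[
\Upsilon_{t,s}=\psi(X_s),
\]
which is $\F_s$-measurable and constant in $t$, so the joint measurability requirement is trivial. For (ii), $\int_0^t\ee\le\|\ee\|_{\mathbb{L}^1}=O(t^{0})$, which is exactly the case $\gamma=\frac12$. For (iii), $\E\,\Upsilon_{t,s}^q=\mu_0P_s(\psi^q)$ and $\E|X_t|^q=\mu_0P_t(|\cdot|^q)$, both bounded uniformly by (ii). Boundedness of $\ee$ is also required by \cref{thm:possiblynonmarkov}; it is not part of the hypotheses. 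Two routes are available:
\begin{itemize}
\item Observe that the proof of \cref{thm:possiblynonmarkov} really only uses the relaxed condition~\eqref{eq:allevatrois} of \cref{rq:allevatrois}. There $C_{1,q}\le \sup\|\psi(X_u)\|_q\,\|\ee\|_{\mathbb{L}^1}<\infty$, and $C_{2,q}<\infty$ by (ii).
\item Alternatively, replace $\ee$ by $\ee\wedge 2$ after enlarging $\psi$ to $\psi\vee1$, which uses $\|\cdot\|_{TV}\le2$.
\end{itemize}
We will take the first route, since it needs no modification of the data.

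\textbf{Conclusion and main obstacle.} The conclusion, namely $\nu\in{\cal P}_q$, $\AUN$ with $\beta=\frac12(1-\frac1q)$ and $\pi_t=\frac12(\bar\nu_t+\nu)$, and the bounds~\eqref{eq:boundsabstrait}, then follows directly from \cref{thm:possiblynonmarkov}. The main obstacle is the invariance/uniqueness step, since $\ee\in\mathbb{L}^1$ gives only integrated decay. We handle it with Ces\`aro averages, $\frac1T\int_0^T\mu_0P_t\,dt\to\nu$ in total variation, together with the semi-group identity. This also shows $\nu(|\cdot|^q)\le\liminf\bar\nu_T(|\cdot|^q)<\infty$ by lower semicontinuity, consistent with the theorem.
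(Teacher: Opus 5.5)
Your proposal is correct and follows the paper's proof. You identify $\nu$ as the unique invariant law through Ces\`aro averages of $P_t$: the paper does this pointwise in $x$ with dominated convergence, while you integrate against $\mu_0$ and truncate $\psi$, which is equivalent. You then set $\Upsilon_{t,s}=\psi(X_s)$ and read $\ATROIS(iii)$ off (ii) before applying \cref{thm:possiblynonmarkov} with $\gamma=\frac12$.

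Your explicit handling of the boundedness of $\ee$ fills a point the paper's proof leaves implicit. Either route works: the relaxed condition~\eqref{eq:allevatrois}, or replacing $(\psi,\ee)$ by $(\psi\vee 1,\ee\wedge 2)$.
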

\begin{rem}\label{eq:allevatroisbis} $\blacktriangleright$ If $\mu_0=\nu$, \emph{(ii)} reads:
$$
\int_{\R^d} \left(\psi(x)^{q}+|x|^q\right) \nu(dx)<+\infty.
$$
 so that if $\psi(x) \le C(1+|x|)$, this condition simply reads $\int  |x|^q  \nu(dx)<+\infty$.

\noindent $\blacktriangleright$ As for Condition~\eqref{eq:allevatrois},~\eqref{eq:ATROIS}\emph{(ii)} can be replaced by the weaker moment assumption:
\begin{equation}\label{eq:condaltmark}
\left[\int_0^{+\infty}  (\mu_0P_t \psi^{q})^{\frac 1q} \ee(t) dt+\sup_{t> 0} \frac{1}{t}\int_0^t  \mu_0P_s |\cdot|^{q} ds\right]<+\infty.
\end{equation}
\end{rem}

\noindent \textbf{Wasserstein criterions.} It is natural to state a criterion based on Wasserstein, here Monge-Kantorovich, distance since our objective is to establish some convergence rate  based on Wasserstein distances.

%\textcolor{red}{Attention : voi r qd m\^eme o\`u on pourrait avoir besoindd e Feller !}
\begin{prop}\label{prop:markovcontraction} Let $(P_t)_{t\ge0}$ be  a  (strongly Feller)\footnote{{The strong Feller property is ensured by $(i)$ (which implies that $P_{\theta_0}g$ is continuous when $g$ is bounded measurable).}} Markov transition semi-group defined on bounded and nonnegative Borel functions on $\R^d$. Assume it admits a unique invariant distribution $\nu$. {Assume} 

$(i)$ $\LSF$ (for Lipschitz Strong Feller):  there exists $\theta_0>0$, $c(\theta_0)>0$   such that
\begin{equation}\label{eq:LipStrgFeller}
\hskip-1,25cm \; \mbox{ for every {bounded} Borel function $g: \R^d \to \R$, $P_{\theta_0}g$ is Lipschitz with $[P_{\theta_0}g]_{\rm Lip}\le c(\theta_0)\|g\|_{\sup}$}, 
\end{equation}

$(ii)$ {$\ATROISbis$} :  there exists $\mu_0\!\in {\cal P}_q(\R^d)$ for some $q> 1$ and $\psi:\ER\rightarrow[0, +\infty) $ and $\ee\!\in \mathbb{L}^1([0, +\infty))$  some Borel  functions such that 
\begin{equation}\label{eq:W1boundby}
\qquad \begin{cases}
(i) & \forall t\ge0,\forall\, x \!\in \R^d, \quad \W_1(P_t(x,dy),\nu)\le \psi(x)\ee(t),\\
 (ii) & \sup_{t\ge 0}\mu_0P_t(\psi^q+ |\cdot|^{q})<+\infty.
\end{cases}
\end{equation}
Then the assumptions of Proposition~\ref{prop:Markovsetting} are satisfied with $\psi\vee 1$ and $t \mapsto 2\mbox{\bf 1}_{\{t\le \theta_0\}}+ c(\theta_0)e(t-\theta_0)\mbox{\bf 1}_{\{t\ge \theta_0\}}\!\in {\mathbb L}^1(du)$ instead of $\psi$ and $e$ respectively so that $\nu(|\cdot|^q)<+\infty$ and all the other conclusions of Theorem~\ref{thm:possiblynonmarkov} are satisfied  for a Markov process $(X_t)_{t\ge 0}$ with transition $(P_t)_{t\ge 0}$ such that $X_0\stackrel{d}{\sim} \mu_0$. 
%Then, $\AUN$ holds for  $t\ge 2\theta_0$, $\beta=\frac{1}{2}(1-\frac 1q)$, $\pi_t=\widetilde \nu_t$ and some (finite) constants $K$ and $M(q)$ depending (only) on $\rho$, $e$ and $C$ (\tcr{voir pour la formulation de $\theta_0$}).
%
\end{prop}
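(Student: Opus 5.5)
The plan is to check the two hypotheses of Proposition~\ref{prop:Markovsetting} for the modified data. We use $\tilde\psi=\psi\vee 1$ and $\tilde e(t)=2\mbox{\bf 1}_{\{t\le\theta_0\}}+c(\theta_0)e(t-\theta_0)\mbox{\bf 1}_{\{t\ge\theta_0\}}$. Once both hypotheses hold, we invoke that proposition and then Theorem~\ref{thm:possiblynonmarkov}. The integrability condition is immediate: $\tilde e\in\mathbb L^1$ with $\int\tilde e=2\theta_0+c(\theta_0)\|e\|_{\mathbb L^1}$. The moment condition \eqref{eq:TVboundby}$(ii)$ is also immediate. Indeed, $\tilde\psi^q\le 1+\psi^q$, so $\sup_t\mu_0P_t(\tilde\psi^q+|\cdot|^q)\le 1+\sup_t\mu_0P_t(\psi^q+|\cdot|^q)<+\infty$ by \eqref{eq:W1boundby}$(ii)$.

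The core step is the total variation bound \eqref{eq:TVboundby}$(i)$. For $t\le\theta_0$ we simply use $\|P_t(x,\cdot)-\nu\|_{TV}\le 2\le 2\tilde\psi(x)$; this is why $\psi$ is replaced by $\psi\vee1$. For $t\ge\theta_0$, we write $\|\mu-\nu\|_{TV}=\sup_{\|g\|_{\sup}\le1}|\mu(g)-\nu(g)|$. Invariance gives $\nu=\nu P_{\theta_0}$, and the semigroup property gives $P_t=P_{t-\theta_0}P_{\theta_0}$. Hence, for bounded Borel $g$ with $\|g\|_{\sup}\le 1$,
\[
P_tg(x)-\nu(g)=\int P_{\theta_0}g\,dP_{t-\theta_0}(x,\cdot)-\int P_{\theta_0}g\,d\nu .
\]
By $\LSF$, the function $P_{\theta_0}g$ is Lipschitz with constant at most $c(\theta_0)$. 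Kantorovich--Rubinstein duality then bounds the right-hand side by $c(\theta_0)\W_1(P_{t-\theta_0}(x,\cdot),\nu)$. By \eqref{eq:W1boundby}$(i)$ this is at most $c(\theta_0)\psi(x)e(t-\theta_0)\le\tilde\psi(x)\tilde e(t)$. Taking the supremum over $g$ gives the claim.

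Two points need care, and I expect them to be the main (mild) obstacle. First, duality requires $P_{t-\theta_0}(x,\cdot)$ and $\nu$ to have finite first moments. If $\psi(x)=+\infty$ or $\W_1=+\infty$, the bound is trivial. Otherwise finiteness of $\W_1(P_{t-\theta_0}(x,\cdot),\nu)$ itself forces a finite first moment whenever one of the two measures has one, and $\nu\in\mathcal P_1$ follows from the moment bound (see below). Second, the footnoted strong Feller property is needed only to make sense of $P_{\theta_0}g$ as a Lipschitz function; here it is simply part of the hypothesis $\LSF$.

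Finally, Proposition~\ref{prop:Markovsetting} gives that $\nu$ is the unique invariant law, that $\nu\in\mathcal P_q$, and that $\AUN$ holds with $\beta=\frac12(1-\frac1q)$ and $\pi_t=\frac12(\bar\nu_t+\nu)$. If one wants $\nu\in\mathcal P_1$ beforehand for the duality step, one can obtain it directly. Since $\W_1(\mu_0P_t,\nu)\le\mu_0(\psi)e(t)$ and $e\in\mathbb L^1$, there is a sequence $t_n$ with $e(t_n)\to0$, so $\mu_0P_{t_n}\to\nu$ weakly. Fatou's lemma with $\sup_t\mu_0P_t|\cdot|^q<\infty$ then gives $\nu(|\cdot|^q)<\infty$, and hence $\nu\in\mathcal P_1$. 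The conclusions of Theorem~\ref{thm:possiblynonmarkov} for $X_0\sim\mu_0$ then follow directly.
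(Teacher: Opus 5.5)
Your proof is correct and is essentially the paper's argument. You split $P_t=P_{t-\theta_0}P_{\theta_0}$, use the invariance of $\nu$ and $\LSF$ to reduce to a $c(\theta_0)$-Lipschitz test function, bound by $c(\theta_0)\W_1(P_{t-\theta_0}(x,\cdot),\nu)\le c(\theta_0)\psi(x)e(t-\theta_0)$, and use the trivial bound $2\le 2(1\vee\psi)$ on $[0,\theta_0]$; the paper tests against indicators $\mathbf 1_A$ rather than general $g$ with $\|g\|_{\sup}\le 1$, which makes no difference. Your first-moment precaution is harmless but unnecessary: for a bounded Lipschitz $f$, the inequality $|\mu(f)-\nu(f)|\le [f]_{\rm Lip}\int|x-y|\,\pi(dx,dy)$ holds for every coupling $\pi$ of arbitrary probability measures.
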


 An important criterion for~\eqref{eq:W1boundby}$(i)$ to be satisfied is the so-called contraction framework as emphasized in the next corollary.
 
\begin{cor}[$L^1$-Contraction]  \label{cor:Contraction} Let $(P_t)_{t\ge0}$ denote a  {{Markov}} transition   semi-group  {with unique invariant distribution  $\nu$}. If this semi-group satisfies  the following contraction inequality:
\begin{equation}\label{eq:W1boundbyter}
\hskip-0.5cm \ATROISter \left\{\begin{array}{ll}
(i)& \; \hbox{there exists $\Psi:\R^2\to \R_+$, Borel function, and $\ee \!\in \mathbb{L}^1([0, +\infty))$ such that  }\\
&\;\forall t\ge0,\forall\, x,\, x'\!\in \R^d, \; \W_1(P_t(x,dy),P_t(x',dy))\le \Psi(x,x')\ee(t),\\
(ii) &\;\displaystyle \hbox{the functions }\psi  :=\int_{\R^d} \Psi(\cdot,x')\nu(dx')<+\infty \mbox{ and $|\cdot |$ satisfy } \ATROISbis(ii)\\
&\; \hbox{ for some $q> 1$ and some probability measure $\mu_0\!\in {\cal P}_q(\R^d)$, $q>1$.}
\end{array}\right.
\end{equation}

Then  $\nu(|\cdot|^q)<+\infty$ and the above assumption $\ATROISbis$ is satisfied.
\end{cor}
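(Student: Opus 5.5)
We need to show two things: $\nu(|\cdot|^q)<+\infty$, and $\W_1(P_t(x,dy),\nu)\le \psi(x)\ee(t)$ together with $\ATROISbis(ii)$. The latter is assumed in $\ATROISter(ii)$, so only the pointwise Wasserstein bound and the moment of $\nu$ need work.

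\emph{Step 1: the Wasserstein bound $\ATROISbis(i)$.} Fix $x$ and $t$. By invariance, $\nu=\nu P_t=\int_{\R^d} P_t(x',dy)\,\nu(dx')$. We use the joint convexity of $\W_1$ with respect to mixtures. If, for each $x'$, $\pi_{x'}$ is an optimal coupling of $P_t(x,\cdot)$ and $P_t(x',\cdot)$, then $\int \pi_{x'}\,\nu(dx')$ is a coupling of $P_t(x,\cdot)$ and $\nu$. Hence
\[
\W_1(P_t(x,\cdot),\nu)\le \int_{\R^d}\W_1\big(P_t(x,\cdot),P_t(x',\cdot)\big)\nu(dx')\le \ee(t)\int_{\R^d}\Psi(x,x')\nu(dx')=\psi(x)\ee(t).
\]
The measurable choice of $\pi_{x'}$ is the only delicate point. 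To avoid it, use the Kantorovich--Rubinstein dual formulation instead. For any $1$-Lipschitz $f$ with $f(0)=0$,
\[
|P_tf(x)-\nu(f)|\le \int |P_tf(x)-P_tf(x')|\,\nu(dx')\le \psi(x)\ee(t),
\]
and taking the supremum over such $f$ gives the bound. Integrability of $f$ under each measure requires first moments. Since $\ATROISter(ii)$ gives $\psi<+\infty$, every term above is finite whenever $P_t(x,\cdot)$ and $\nu$ have first moments. If this fails, the right-hand side $\psi(x)\ee(t)$ is the only quantity we need to control, and the inequality should be read in the extended sense.

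\emph{Step 2: moments of $\nu$.} We want $\nu(|\cdot|^q)<+\infty$. By $\ATROISbis(ii)$, $\sup_t \mu_0P_t(|\cdot|^q)<+\infty$, and $\ee\in\mathbb{L}^1$. We show $\mu_0P_t\to\nu$ weakly along a sequence $t_n\to\infty$, then conclude by Fatou (lower semicontinuity of $\mu\mapsto\mu(|\cdot|^q)$ under weak convergence).

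Since $\ee\in\mathbb{L}^1$, there is a sequence $t_n\to\infty$ with $\ee(t_n)\to0$. By Step 1 and convexity,
\[
\W_1(\mu_0P_{t_n},\nu)\le \ee(t_n)\,\mu_0(\psi).
\]
Here $\mu_0(\psi)\le (\mu_0P_0\psi^q)^{1/q}<+\infty$ by $\ATROISbis(ii)$ at $t=0$, since $q>1$. This requires $\W_1$ to be meaningful, i.e.\ a first moment for $\nu$, which we do not yet have. To avoid circularity, work with the bounded-Lipschitz metric: for $f$ bounded and $1$-Lipschitz, the same computation gives $|\mu_0P_{t_n}f-\nu(f)|\le \ee(t_n)\mu_0(\psi)\to0$. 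This yields weak convergence, and Fatou then gives $\nu(|\cdot|^q)\le\liminf_n \mu_0P_{t_n}(|\cdot|^q)<+\infty$.

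The main obstacle is exactly this ordering issue: the moments of $\nu$ must be obtained before $\W_1$ to $\nu$ makes sense. Working with bounded Lipschitz test functions, which also needs integrability of $\Psi$ in the integral above, resolves it. Once $\nu\in{\cal P}_q\subset{\cal P}_1$, Step 1 holds in the genuine $\W_1$ sense, and $\ATROISbis$ is fully verified.
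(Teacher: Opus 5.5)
Your proof is correct. Step~1 is the paper's argument. For a $1$-Lipschitz $f$, invariance gives $P_tf(x)-\nu(f)=\int (P_tf(x)-P_tf(x'))\,\nu(dx')$. Kantorovich--Rubinstein then bounds the integrand by $\W_1(P_t(x,dy),P_t(x',dy))\le\Psi(x,x')\ee(t)$, and one takes the supremum over $f$.

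Where you go beyond the paper is Step~2. The paper's proof of this corollary does not address the claim $\nu(|\cdot|^q)<+\infty$ at all. In \cref{thm:possiblynonmarkov} the analogous claim is obtained from the TV bound applied to $|\cdot|^q\wedge M$, but that route is not available here without \LSF. The paper also tacitly uses $\nu(f)=\nu P_tf$ for unbounded Lipschitz $f$, i.e.\ that $\nu\in{\cal P}_1$.

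Your route removes this circularity and makes the corollary self-contained:
\begin{itemize}
\item For bounded $1$-Lipschitz test functions, $|P_tf(x)-P_tf(x')|\le\Psi(x,x')\ee(t)$ holds with no moment assumption.
\item This gives $|\mu_0P_{t_n}f-\nu(f)|\le\ee(t_n)\mu_0(\psi)\to 0$ along a sequence with $\ee(t_n)\to0$, hence weak convergence.
\item Lower semicontinuity of $\mu\mapsto\mu(|\cdot|^q)$ then bounds $\nu(|\cdot|^q)$ by $\sup_t\mu_0P_t(|\cdot|^q)$.
\end{itemize}

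A slightly more direct variant, closer to the proof of \cref{thm:possiblynonmarkov}, applies the same bound to the Lipschitz function $|\cdot|^q\wedge M$, whose Lipschitz constant is $qM^{1-1/q}$. This gives $\nu(|\cdot|^q\wedge M)\le \mu_0P_{t_n}(|\cdot|^q)+qM^{1-1/q}\ee(t_n)\mu_0(\psi)$; let $n\to+\infty$, then $M\to+\infty$.

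One detail is left implicit when you say Step~1 then holds in the genuine $\W_1$ sense: you need $P_t(x,\cdot)\in{\cal P}_1$. This follows because $\nu=\int P_t(x',\cdot)\,\nu(dx')\in{\cal P}_1$ forces $P_t(x',\cdot)\in{\cal P}_1$ for $\nu$-a.e.\ $x'$. Combined with $\W_1(P_t(x,\cdot),P_t(x',\cdot))\le\Psi(x,x')\ee(t)<+\infty$, this puts $P_t(x,\cdot)$ in ${\cal P}_1$ too.
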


\begin{rem}   If $\psi(x) $ is well-defined  for every $x\!\in \R^d $, a sufficient condition for $\ATROISter(ii)$  that can be directly read on $\Psi$ is 
\[
 \sup_{t\ge 0} \int_{(\R^d)^2}(\Psi(\xi,\xi')^q+|\xi'|^q)\mu_0P_t(d\xi)\nu(\d\xi')<+\infty.
 \]
 This is an easy consequence of Jensen's inequality (which also shows that $\psi(x)$ is defined $\mu_0(dx)$-$a.s.$).
\end{rem}

%\noindent $\bullet$ \tcr{@F\&G: Sous cette forme on est plus ou moins oblig\'e de supposer soit que $\psi(x) = O(|x|)$, soit que $\nu$ int\`egre a priori $\psi$. Ainsin si l'on prend $\psi(x)= O(|x|^r)$ pour un $r>1$, soit  if faut prendre\dots $\beta= \frac 12(1-\frac rq)$ comme j'avais fait...ou cela revient \`a supposer a priori (de fa\c con cach\'ee) que $\nu(|\cdot|^{rq})<+\infty$.}

In the next  two sections, we apply the above criterions to two settings (among others): Brownian diffusions and fractional SDEs (which are not Markovian).
\subsection{Applications}
\subsubsection{Applications to Brownian diffusions}
{In this section, we apply our criterions to ergodic Brownian diffusions. An example has already been given in~\cref{subsec:Poincaree}. Here, we mainly focus on examples without resorting to Poincar\'e inequality.}  We consider the $\R^d$-valued  stochastic differential equation (SDE)
\begin{equation}\label{eq:EDSbis}
X_t =X_0+\int_0^t b(X_s)ds + \int_0^t \sigma(X_s)dW_s,
\end{equation}
where $W$ is a $d'$-dimensional standard Brownian motion, $X_0$ is a random vector defined on the same probability space,  independent of $W$, $b:\R^d\to \R$ and $\sigma:\R^d \to {\mathbb M}_{d,d'}(\R)$ are locally Lipschitz continuous
 and satisfy
\begin{equation}\label{eq:onR+}
\exists\, c>0, \; \forall\, x\!\in \R^d, \quad \tfrac 12\|\s(y)\|^2_F(x)+\langle b(y), y\rangle\le C(1+|y|^2).
\end{equation}
Then (see e.g.~\cite{Wang2020}) the SDE~\eqref{eq:EDSbis} has a unique strong solution on the whole nonnegative real line\footnote{Without this assumption the strong solution exists only until  an exploding time $\tau$ possibly infinite. Under condition~\eqref{eq:onR+} one easily shows applying It\^o's formula to $y\mapsto |y|^2$ that   $\E\, |X^x_{t\wedge \tau}t|^2\le (|x|^2+2C)e^{2Ct}$ so that \dots $\tau =+\infty$ $\P$-$a.s.$}. The associated  {Markov semi-group {$(P_t)_{t\ge0}$} is Feller} and the infinitesimal generator of the diffusion 
reads on $C^2(\R^d,\R)$ functions $g:\R^d\to \R$
\[
\L g = \langle b\,|\,g\rangle + \frac 12 {\rm Tr}(\s^{\top} \nabla^2g\, \s).
\]
{The rest of this section follows the same structure as the previous one, distinguishing between the total variation and Wasserstein approaches.We end this section by a {general} result which is less constraining in terms of  {the mean-reverting condition on $b$},  based on (usually non-quantitative) convergence in total variation to equilibrium.}

\noindent {\textbf{Criterions based on TV-convergence.}  The following result is based  on the TV-convergence to equilibrium obtained in \cite{doucfortguillin} which has the advantage to require very weak conditions of mean-reverting:}
\begin{thm}\label{prop:weakreverting} Assume that $d=d'$, that $b$ and $\sigma$ are locally Lipschitz functions, that $\sigma$ is bounded and that $\sigma$ is uniformly elliptic\footnote{In fact, the ellipticity assumption must be uniform only on compact sets.}:
\begin{equation}\label{eq:sigmaUellip}
\exists\,\ve_0>0,\,\forall x\in\ER^d,\quad  \s\s^{\top}(x)\ge \ve_0I_d\quad\textnormal{(in the sense of symmetric matrices)}.
\end{equation}
 Assume that $\mu_0$ has {finite} moments of any orders. Then, if there exist some positive $M$, {$a>0$ and $\alpha>0$} such that for $|x|>M$, $\langle b(x),x\rangle\le  -\alpha |x|^{2a}$, 
 %Let $V:\ER^d\mapsto \ER_+*$ denote a ${\cal C}^2$ function such that some $\rho>0$, $\beta\in\ER$, $\alpha>0$ and $a\in(0,1]$ exist such that
%$$\liminf_{|x|\rightarrow+\infty} \frac{V(x)}{|x|^\rho}>0  \quad\textnormal{and}\quad {\cal L}V\le \beta-\alpha V^a.$$ Assume the   uniform  ellipticity assumption~\eqref{eq:sigmaUellip}. 
$$\big\| {\cal W}_p(\nu_t,\nu) \big\|_p\lesssim t^{-\frac{1}{2p}\wedge\left(\frac{1}{d}\right)^{-}}\quad \textnormal{in the sense of~\eqref{eq:toutunbis}}.$$
\end{thm}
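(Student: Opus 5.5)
The plan is to apply \cref{prop:Markovsetting}, in the weakened form of \cref{eq:allevatroisbis} (condition~\eqref{eq:condaltmark}), with $\gamma=\frac12$ and every $q>1$. Then~\eqref{eq:toutunbis} follows from \cref{thm:possiblynonmarkov} by letting $q\to+\infty$. The work therefore splits into two parts: (a) a subgeometric total variation bound $\|P_t(x,\cdot)-\nu\|_{TV}\le \psi(x)\ee(t)$ with $\ee$ integrable, and (b) moment controls on $\mu_0P_t\psi^q$ and on the Cesàro averages $\frac1t\int_0^t\mu_0P_s|\cdot|^q ds$ for every $q$.

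For (a), I will use the Douc--Fort--Guillin theory~\cite{doucfortguillin}. With the Lyapunov function $V(x)=\exp(\kappa|x|^{r})$ (or $1+|x|^{m}$ with $m$ large), Itô's formula together with $\langle b(x),x\rangle\le-\alpha|x|^{2a}$ and boundedness of $\s$ gives a drift condition ${\cal L}V\le -\phi(V)+b\mathbf 1_K$ with a concave rate $\phi$. If $a\ge 1$ the rate is geometric. If $a<1$, the choice $V=1+|x|^m$ gives ${\cal L}V\le -c\,V^{1-\frac{2(1-a)}{m}}+C$, i.e. $\phi(v)=cv^{1-\delta}$ with $\delta=2(1-a)/m$ as small as desired. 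Uniform ellipticity on compacts, local Lipschitz coefficients and the resulting strong Feller/irreducibility property make compact sets petite. DFG then yield $\|P_t(x,\cdot)-\nu\|_{TV}\le C\,V(x)\,r_\phi(t)^{-1}$ with $r_\phi(t)\asymp t^{\frac{1-\delta}{\delta}}$. Taking $m$ large makes $\ee(t)=r_\phi(t)^{-1}$ integrable, and we set $\psi=CV\approx |x|^m$. The same drift condition also gives existence and uniqueness of $\nu$ and finiteness of all its moments.

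For (b), we need $\int_0^\infty (\mu_0P_t\psi^q)^{1/q}\ee(t)dt<\infty$ and $\sup_t\frac1t\int_0^t\mu_0P_s|\cdot|^q ds<\infty$. Apply Itô's formula to $|x|^{k}$ for large $k$. Since $\s$ is bounded, ${\cal L}|x|^k\le C-c|x|^{k-2+2a}$. Integrating, then dropping the nonnegative term, gives
\[
\E|X_t|^k+c\int_0^t\E|X_s|^{k-2+2a}ds\le \E|X_0|^k+Ct .
\]
This yields two things. First, a Cesàro bound $\frac1t\int_0^t\E|X_s|^{k'}ds\le C$ for every $k'$, choosing $k=k'+2-2a$. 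Second, polynomial growth $\E|X_t|^k\le C(1+t)$ for every $k$, using that $\mu_0$ has all moments. Hence $(\mu_0P_t\psi^q)^{1/q}\lesssim (1+t)^{1/q}$. Since $\ee(t)$ decays like an arbitrarily large polynomial power, this factor is absorbed once $m$ is chosen large relative to $q$, and~\eqref{eq:condaltmark} holds.

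The main obstacle is balancing the exponents in the case $a<1$. Making $\ee$ decay faster requires a larger $m$, which enlarges $\psi=|x|^m$ and hence the required moment order $mq$. The point to check is that the DFG rate obtained from $V=|x|^m$ behaves like $t^{-(m/(2(1-a))-1)}$, so it improves linearly in $m$, while the growth of $\E|X_t|^{mq}$ is at most linear in $t$ at the power $1/q$. Choosing $m>4(1-a)+2$ should then suffice uniformly in $q$. The remaining step is to verify DFG's hypotheses (petite compacts via ellipticity and bounded $\s$), which I would take from~\cite{doucfortguillin} directly.
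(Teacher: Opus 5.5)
This is essentially the paper's proof: you reduce to Proposition~\ref{prop:Markovsetting} through~\eqref{eq:condaltmark}, apply Douc--Fort--Guillin with the polynomial Lyapunov function $1+|x|^m$ (the paper's $(1+|x|^2)^p$ with $m=2p$, giving the same $\ee(t)\asymp t^{1-\frac{m}{2(1-a)}}$ and a polynomial $\psi$), and use It\^o moment bounds; the only difference is that you control $\int_0^{+\infty}(\mu_0P_t\psi^q)^{1/q}\ee(t)\,dt$ through the crude growth $\E|X_t|^k\le C(1+t)$ and a large $m$, whereas the paper uses the nonnegative supermartingale $\ee(t){\cal V}^p(X_t)+\tilde\alpha\int_0^t{\cal V}^{p+a-1}(X_s)\ee(s)\,ds+\tilde\beta\int_t^{+\infty}\ee(s)\,ds$. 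One caveat, which the paper shares: here $\Upsilon_{t,s}=\psi(X_s)$, so the estimate in the proof of Theorem~\ref{thm:possiblynonmarkov} actually needs $\sup_{t>0}\frac1t\int_0^t\mu_0P_s\psi^q\,ds<+\infty$ (integrate out $\ee(u-s)$, then apply H\"older jointly in $(s,\omega)$) rather than the $\ee$-weighted integral of~\eqref{eq:condaltmark}; your Ces\`aro bounds for every moment order already give this, so your argument closes either way.
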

\noindent {The proof of this theorem is  postponed to~\cref{subsec:proof2.12}.} 
%{Even if the above result is very general, it is based on a highly none quantitative result. Actually, the strategy of \cite{doucfortguillin} is based on ``Meyn-Tweedie-type'' arguments which are not built in view of precise estimates.}

\smallskip
\noindent {\textbf{Criterions based on Wasserstein convergence.}}  {In this part, we provide tractable conditions which allow to use \cref{cor:Contraction}, based on Wasserstein convergence to equilibrium. {Oppositely to those of \cref{prop:weakreverting}}, these criteria have the advantage of being more likely to lead to quantitative bounds (a subject that we do not explore here). Furthermore, $\sigma$ will not be supposed to be bounded and we will include situations where all the moments are not uniformly controlled.}
 Before  giving {such conditions} in~\cref{thm:diffusionsconfluentes}  {we thus first recall in the theorem below two important results on Wasserstein-contraction} of solutions of SDEs.
\begin{thm} \label{thm:WangEberle} Assume $b$  and $\sigma$ are locally Lipschitz continuous and satisfy~\eqref{eq:onR+}.\\
\smallskip
\noindent $(a)$ {\it Uniform $L^q$-contraction}. Let {$q\! \in (1,+\infty)$}.  Assume that, for every $x$, $y\!\in \R^d$, $x\neq y$,
\begin{align}\label{eq:Lq-contraction} 
   \langle b(x)-b(y)\,|\,x-y\rangle +\tfrac 12\ \|\s(x)-\s(y)\|^2_F+ (\tfrac q2-1)\frac{|(\s(x)-\s(y))^{\top}(x-y)|^2}{|x-y|^2} \le   - \bar  \kappa_q |x-y|^2
\end{align}
where $\bar \kappa_{{q}}$ is positive constant.  Then, for every $\mu_i \!\in{\cal P}_q(\R^d)$, $i=1,2$, 
\begin{equation}\label{eq:Lq-confluence} 
\forall\, t\ge 0, \quad {\cal W}_{{q}}(\mu_1P_t, \mu_2P_t) \le e^{-\bar \kappa_{{q} t}}  {\cal W}_q(\mu_1, \mu_2). 
\end{equation}
In particular, for every $x$, $y\!\in \R^d$, for every $t\ge 0$, ${\cal W}_1(P_t(x,dy), P_t(x',dy)) \le e^{-\bar \kappa_1 t}  |x-y|$.
%Moreover, if $q\ge 2$, SDE~\eqref{eq:EDSbis} has a unique invariant distribution $\nu$ and $\nu \!\in {\cal P}_q(\R^d)$.

\smallskip
\noindent $(b)$ {\it $L^1$-contraction, elliptic case (see~\cite[Theorems~2.5  and 2.6]{Wang2020})}. Assume furthermore that $\sigma$ is uniformly elliptic (see \eqref{eq:sigmaUellip} with $d'\ge d$ to make ellipticity possible).  Assume furthermore that $b$ and $\s$ satisfy the following $L^1$-contraction assumption:   for every $x$, $y\!\in \R^d$, $x\neq y$,
\begin{align}
\nonumber \langle b(x)-b(y)\,|\,x-y\rangle +\tfrac 12\bigg( \|\s_0(x)-\s_0(y)\|^2_F-&\frac{|(\s(x)-\s(y))^{\top}(x-y)|^2}{|x-y|^2}\bigg) \\
\label{eq:L1-contractionelliptic} & \qquad\quad \le (\underline \kappa -(\underline \kappa +\bar  \kappa)\mbox{\bf 1} _{\{|x|\ge R\}})|x-y|^2
\end{align}
for some non-negative constants $\underline \kappa, \bar \kappa,R$ with $\bar \kappa>0$. Then there exists positive real constants $C,\lambda>0$ such that, for every $\mu_i \!\in{\cal P}_1(\R^d)$, $i=1,2$, 
\begin{equation}\label{eq:Lq-confluenceelliptic} 
\forall\, t\ge 0, \quad {\cal W}_1 (\mu_1P_t, \mu_2P_t) \le  C e^{-\lambda t}  {\cal W}_1 (\mu_1, \mu_2).
\end{equation} 
\noindent In particular, for every $x$, $y\!\in \R^d$, for every $t\ge 0$, ${\cal W}_1(P_t(x,dy), P_t(x',dy)) \le C e^{-\lambda t}  |x-y|$.
%Moreover, if $q\ge 2$, SDE~\eqref{eq:EDSbis} has a unique invariant distribution $\nu$ and $\nu \!\in {\cal 
 \end{thm}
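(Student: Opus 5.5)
The statement to prove is Theorem~\ref{thm:WangEberle}. Part (a) is a synchronous coupling estimate; part (b) is quoted from~\cite{Wang2020} and needs only a consistency check.

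\textbf{Part (a): synchronous coupling.} Let $(\xi_1,\xi_2)$ be an optimal coupling for ${\cal W}_q(\mu_1,\mu_2)$, independent of $W$. Let $X^1,X^2$ be the strong solutions of~\eqref{eq:EDSbis} started at $\xi_1,\xi_2$ and driven by the \emph{same} Brownian motion. Set $Z_t=X^1_t-X^2_t$, $\Delta b_t=b(X^1_t)-b(X^2_t)$ and $\Delta\s_t=\s(X^1_t)-\s(X^2_t)$.

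On $\{Z\neq0\}$, apply It\^o's formula to $\varphi(z)=|z|^q$, using $\nabla\varphi=q|z|^{q-2}z$ and
\[
\nabla^2\varphi=q|z|^{q-2}I_d+q(q-2)|z|^{q-4}zz^\top .
\]
This gives
\[
d|Z_t|^q=q|Z_t|^{q-2}\Big(\langle \Delta b_t,Z_t\rangle+\tfrac12\|\Delta\s_t\|_F^2+\big(\tfrac q2-1\big)\frac{|\Delta\s_t^\top Z_t|^2}{|Z_t|^2}\Big)dt+dM_t .
\]
By~\eqref{eq:Lq-contraction}, the drift term is bounded by $-q\bar\kappa_q|Z_t|^q\,dt$.

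Pathwise uniqueness implies that once $Z$ hits $0$ it stays there. For $q\in(1,2)$, $\varphi$ is not $C^2$ at the origin. I would handle this either by the stopping argument just mentioned, or by applying It\^o's formula to $(|z|^2+\ve)^{q/2}$ and letting $\ve\to0$.

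The local martingale $M$ is removed by localizing with $\tau_N=\inf\{t:|X^1_t|+|X^2_t|\ge N\}$. Since $e^{q\bar\kappa_q t}|Z_t|^q$ is a nonnegative local supermartingale, taking expectations gives
\[
\E|Z_{t\wedge\tau_N}|^q e^{q\bar\kappa_q(t\wedge\tau_N)}\le\E|Z_0|^q .
\]
Non-explosion (guaranteed by~\eqref{eq:onR+}) gives $\tau_N\to\infty$, and Fatou's lemma then yields $\E|Z_t|^q\le e^{-q\bar\kappa_q t}\E|Z_0|^q$. Since $(X^1_t,X^2_t)$ is a coupling of $(\mu_1P_t,\mu_2P_t)$, this proves~\eqref{eq:Lq-confluence}.

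\textbf{Dirac case.} Taking $\mu_i=\delta_x,\delta_{x'}$ gives ${\cal W}_1\le{\cal W}_q\le e^{-\bar\kappa_q t}|x-x'|$. For the stated $\bar\kappa_1$ with $q=1$, the same computation applies to $\varphi(z)=|z|$, whose Hessian term $-\frac{|\Delta\s^\top Z|^2}{2|Z|^3}$ matches~\eqref{eq:Lq-contraction} at $q=1$.

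\textbf{Part (b).} This is exactly~\cite[Theorems 2.5--2.6]{Wang2020}, so I would only check that the hypotheses match. The method there is a reflection coupling $dX^2=b\,dt+\s\big(I-2u u^\top\big)dW$ (in a suitably elliptic decomposition $\s=\s_0+$ a nondegenerate part), together with a concave Eberle-type distance $f(|Z|)$. Here $f$ is chosen so that $f''$ compensates the non-contractivity $\underline\kappa$ inside the ball of radius $R$, while the outside contractivity $\bar\kappa$ gives decay. This yields ${\cal W}_f(\mu_1P_t,\mu_2P_t)\le e^{-\lambda t}{\cal W}_f(\mu_1,\mu_2)$ with $c|z|\le f(|z|)\le|z|$, and the equivalence of the metrics produces the constant $C$.

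\textbf{Main obstacle.} The main difficulty lies in part (b): constructing $f$ and controlling the coupling time when $\s$ is not constant. That is why I would cite~\cite{Wang2020} rather than reprove it.
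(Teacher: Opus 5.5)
Your proposal is correct and consistent with the paper. The paper gives no proof of this theorem at all: it presents it as a recalled result and attributes part (b) to \cite[Theorems~2.5 and~2.6]{Wang2020}, exactly as you do. Your synchronous-coupling It\^o computation for $|X^1_t-X^2_t|^q$ is the standard argument underlying part (a). This includes the hitting-of-zero or regularization step for $q\in(1,2)$, and for the $\bar\kappa_1$ claim either the $q=1$ Hessian term you matched or simply ${\cal W}_1\le{\cal W}_q$.
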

 
% \noindent \tcr{@Gilles: transformer la rq en corollary pour avoir  $\ATROISter$ et conclure que l'on a les bonnes vitesses avec $\pi_t= \frac 12 (\mu +\nu)$, $\b= \frac 12(1-\frac 1q)$, etc. Adapter la reuve du thm accordingly en invoquant les bonnes conditions etc? Peut servir dans le pb de Fabien car sans doute m\^emes vitesses...}
 \begin{rem} $\bullet$ A  less sharp but simpler criterion  for~\eqref{eq:Lq-contraction} is
 \begin{align*}
\forall\, x,\, y\!\in \R^d, \qquad    \langle b(x)-b(y)\,|\,x-y\rangle +\tfrac{(q-1)\vee 1}{2}\|\s(x)-\s(y)\|^2_F
%+ (\tfrac q2-1)&\frac{|(\s(x)-\s(y))^{\top}(x-y)|^2}{|x-y|^2}\bigg) \le  
 \le - \bar  \kappa_q |x-y|^2
\end{align*}
since $\frac{|(\s(x)-\s(y))^{\top}(x-y)|^2}{|x-y|^2}\le \|\s(x)-\s(y)\|^2_F$.

\noindent {$\bullet$ When $q\ge 2$ in the above Claim~$(a)$, one checks that the uniform contraction assumption~\eqref{eq:Lq-contraction} implies the  existence of a unique invariant distribution $\nu$ for~\eqref{eq:EDSbis}, $\nu$ lying in $\mathcal{P}_q(\R^d)$ since the mean-reverting Hajek's criterion, e.g. with $|\cdot|^q$ as a Lyapunov function, is satisfied, namely
 \begin{equation}\label{eq:Hajekdiff}
({\rm \bf Haj)}_q\quad \forall\, x\!\in \R^d\setminus\{0\},Ê\quad  \langle b(x)\,|\, x\rangle + \tfrac12 \|\sigma(x)\|^2_{_F}+ (\tfrac q2-1) \frac{|(\s(x)^{\top}x|^2}{|x|^2}\le
\underline \kappa'   -\bar \kappa' |x|^2 
 \end{equation}
where $\bar \kappa'>0$. When $q\!\in [1,2)$, this no longer true and we have to add an extra mean-reverting assumption, e.g. still of the Hajek type, this time with the Lyapunov function $(1+|x|^2)^{\frac q2}$ (to avoid singularities). }%\tcr{@Gilles: remplacer par $r$ ?}
 \end{rem}
 
 %$\blacktriangleright$  Let $q\!\in [1,2)$. If, for every $x\!\in \R^d$,
% \begin{equation}\label{eq:Hajekdiff}
% \langle b(x)\,|\, x\rangle + \tfrac12 \|\sigma\|^2_{_F}+ (\tfrac q2-1) \frac{|(\s(x)^{\top}x|^2}{|x|^2}\le -\bar \kappa' |x|^2 +\underline \kappa' |x|^{(2-q)^+}
% \end{equation}
% for some real constants $\bar \kappa'$, $\underline \kappa'$, $\bar \kappa'>0$, then  EDS~\eqref{eq:EDSbis}  has a unique invariant distribution  $\nu$ and $\nu \!\in {\cal P}_q(\R^d)$. The above condition implies that the infinitesimal generator ${\cal L}$ of the the SDE satisfies  ${\cal L}(|\cdot|^q)\le \beta_q -\a_q |\cdot|^q $ which in turn ensures the existence of (at least) one  invariant distribution $\nu$ and that any such invariant distribution lies in ${\cal P}_q(\R^d)$. Under the assumptions of the above theorem uniqueness obviously holds  owing the confluence property. However, note that when $q\!\in [1,2)$,~\eqref{eq:Hajekdiff}  cannot be deduced from the above contraction assumptions~\eqref{eq:Lq-contraction} or~\eqref{eq:L1-contractionelliptic}  whereas such is the case for~\eqref{eq:Lq-contraction} when $q\ge 2$.  
% 
 \begin{thm} [Tractable conditions for a fixed $q\!>\!1$] \label{thm:diffusionsconfluentes} $(\alpha)$ {\em About  Condition~$\ATROISter$, see~\eqref{eq:W1boundbyter}}.  Assume that either all assumptions of  $(a)$ or all those of  $(b)$ from Theorem~\ref{thm:WangEberle} are in force. {Moreover, assume  that  {(${\rm \bf Haj})_q$ holds} for some $q>1$}.
 % the following  Hajek condition holds}
% %\tcr{@Gil: bizarre qd m\^eme \`a cause du comportement en $0$ notamment quand $q\!\in [1,2)$. Fonctionne alors sous l'hypo suppl\'ementaire  $%\P(\exists t: X_t=0)=0$}
%  \begin{align}\label{eq:Hajekdiff}
% ({\rm \bf Haj})_q\quad  \langle b(x)\,|\, x\rangle + \tfrac12 \|\sigma(x)\|^2_{_F}+ (\tfrac q2-1) \frac{|(\s(x)^{\top}x|^2}{{1+|x|^2}}\le\underline \kappa' -\bar \kappa' (1+ |x|^2)^{\frac{q}{2}} \end{align}
%holds for  some real constants $\bar \kappa'$, $\underline \kappa'$ with  $\bar \kappa'>0$. \tcb{Then~\eqref{eq:EDSbis} admits a unique invariant distribution $\nu$ with $\nu\!\in \mathcal{P}_q(\R^d)$}.  \tcr{@Fabien : plus clair pur moi o\`u \c ca sert: Assume that $b$ and $\s$ are $C^{1+\ve}_b$~(\footnote{i.e. have bounded and $\ve$-H\"older-continuous partial derivatives.})}.
% pour l'existence du flot tangent}. 
{Then,  SDE~\eqref{eq:EDSbis}  admits a unique invariant distribution $\nu$, $\nu \!\in {\cal P}_q(\R^d)$, and for any $\mu_0\!\in {\cal P}_q(\R^d)$ Condition~$\ATROISter$  is satisfied with $q$ and 
$$
\Psi(x,y)= |x-y|\quad \textnormal{and}\quad e(t) = e^{-\kappa t} \quad\textnormal{with $\kappa=\bar \kappa_q$ for $(a)$ and  $\kappa=\lambda$ for $(b)$}.
$$}
 %\smallskip
%$ (\beta)$  SDE~\eqref{eq:EDSbis} has a unique invariant distribution  $\nu$, $\nu \!\in {\cal P}_q(\R^d)$ and for any $\mu_0\!\in {\cal P}_q(\R^d)$,   $\ATROISter(ii)$ is satisfied with $q$, hence $\ATROISbis(ii)$ \tcb{from \cref{prop:markovcontraction}.}
 % \tcb{as well as $\ATROISter(ii)$  for $q$}. 
 \noindent $(\beta)$ {(About $\LSF$, see~\eqref{eq:LipStrgFeller}}. Moreover, if  $d'=d$, {if $b$ and $\sigma$ are ${\cal C}^1$ with bounded derivatives and if $\s$ is uniformly elliptic (condition already required in \cref{thm:WangEberle}$(b)$)} %assumption
%\begin{equation}\label{eq:sigmaUellip}
%\exists\, \underline \sigma_0>0 \mbox{ such that } \forall\, x,\, \xi \!\in \R^d,Ê\quad  \xi^{\top} \sigma\sigma^{\top} (x)\xi = |\s^{\top}(x)\xi|^2 \ge \underline \sigma_0^2 |\xi|^2,
%\end{equation} 
then Condition~$\LSF$ of Proposition~\ref{prop:markovcontraction} is satisfied for any $\theta_0>0$.

\smallskip

\noindent {$(\gamma)$  If the above assumptions in $(\alpha)$ and $(\beta)$ are satisfied and $\mu_0\!\in {\cal P}_q(\R^d)$, then the rates established in \cref{thm:abstrait} apply for $\E\,\mathcal{W}_p(\nu_t,\nu)^p$ with $0<p<q$.}
\end{thm}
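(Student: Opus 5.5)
Part $(\alpha)$ comes first: existence, uniqueness and moments of $\nu$, then the contraction condition $\ATROISter$. Under $({\rm \bf Haj})_q$, Itô's formula applied to a smoothed Lyapunov function $V(x)=(1+|x|^2)^{q/2}$ (or $|x|^q$ when $q\ge 2$) gives $\mathcal{L}V\le \beta_q-\alpha_q V$. This is the computation behind the Hajek criterion recalled after \cref{thm:WangEberle}. Gronwall's lemma, with a localisation by stopping times, then yields
\[
\sup_{t\ge0}\E\,V(X_t)\le \E\,V(X_0)+\beta_q/\alpha_q<+\infty
\]
for $X_0\sim\mu_0\in{\cal P}_q(\R^d)$. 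Hence the family $(\bar\nu_t)$ is tight and Krylov--Bogoliubov produces an invariant distribution $\nu$. By Fatou, $\nu(|\cdot|^q)<+\infty$. Uniqueness follows from the $\W_1$-contraction in \cref{thm:WangEberle}$(a)$ or $(b)$: if $\nu,\nu'$ are both invariant, then $\W_1(\nu,\nu')=\W_1(\nu P_t,\nu'P_t)\le Ce^{-\kappa t}\W_1(\nu,\nu')\to0$. The same theorem gives $\W_1(P_t(x,\cdot),P_t(x',\cdot))\le C|x-x'|e^{-\kappa t}$, which is $\ATROISter(i)$ with $\Psi(x,y)=|x-y|$ and $\ee(t)=e^{-\kappa t}$ (the constant $C$ is absorbed into $\ee$). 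For $(ii)$, note that $\psi(x)=\int|x-x'|\nu(dx')\le |x|+M_\nu(1)$. Therefore
\[
\sup_t\mu_0P_t(\psi^q+|\cdot|^q)\le C_q\Big(\sup_t\E|X_t|^q+M_\nu(q)\Big)<+\infty
\]
by the moment bound above, which gives $\ATROISbis(ii)$.

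Part $(\beta)$ is the $\LSF$ property, which is the main technical step. Since $b,\sigma\in{\cal C}^1$ with bounded derivatives, the flow $x\mapsto X^x_t$ is differentiable in $L^2$. Its tangent process $J_t=\nabla_xX^x_t$ solves a linear SDE, so $\sup_x\E|J_t|^2\le e^{Ct}$ on $[0,\theta_0]$. Since $\sigma$ is square, uniformly elliptic and $\sigma^{-1}$ is bounded, the Bismut--Elworthy--Li formula applies:
\[
\nabla P_{\theta_0}g(x)=\frac1{\theta_0}\E\Big[g(X^x_{\theta_0})\int_0^{\theta_0}\big\langle\sigma^{-1}(X^x_s)J_s,\, dW_s\big\rangle\Big].
\]
It holds first for $g\in{\cal C}^1_b$, then extends to bounded Borel $g$ by a monotone class / approximation argument, using the strong Feller property and dominated convergence. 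Cauchy--Schwarz and the Itô isometry then give $|\nabla P_{\theta_0}g|\le c(\theta_0)\|g\|_{\sup}$ with $c(\theta_0)\lesssim \theta_0^{-1/2}\ve_0^{-1/2}e^{C\theta_0}$. The delicate points are justifying the formula for merely bounded $g$ and controlling $J$ uniformly in $x$; the bounded derivatives of $b$ and $\sigma$ are what make both possible.

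Part $(\gamma)$ is a chaining of earlier results. By $(\alpha)$ and \cref{cor:Contraction}, $\ATROISbis$ holds. Combined with $\LSF$ from $(\beta)$, \cref{prop:markovcontraction} gives the hypotheses of \cref{prop:Markovsetting}. Then \cref{thm:possiblynonmarkov} yields $\AUNgamma$ with $\gamma=\frac12$, $\beta=\frac12(1-\frac1q)$ and $\pi_t=\frac12(\bar\nu_t+\nu)$, where ${\Mpiq}\le 1\vee\sup_t\E|X_t|^q+M_\nu(q)<+\infty$. \cref{thm:abstrait} then delivers the stated rates for every $0<p<q$.
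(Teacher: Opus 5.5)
Your proposal is correct and follows essentially the same route as the paper. For $(\alpha)$ you use Hajek/Lyapunov moment bounds together with the contraction of \cref{thm:WangEberle}; for $(\beta)$, the Bismut--Elworthy--Li formula with It\^o's isometry, uniform ellipticity and an $L^2$ bound on the tangent process; and for $(\gamma)$, the chain \cref{cor:Contraction}, \cref{prop:markovcontraction}, \cref{prop:Markovsetting}, \cref{thm:abstrait}.

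One detail is worth making explicit. For $q\in(1,2)$, your choice $V=(1+|x|^2)^{q/2}$ puts $1+|x|^2$ instead of $|x|^2$ in the denominator of $({\rm \bf Haj})_q$. The resulting excess term is absorbed only once you also invoke the growth condition~\eqref{eq:onR+}, via the identity ${\cal L}V=q(1+|x|^2)^{\frac q2-2}\big(A(x)+|x|^2H(x)\big)$, where $A(x)=\langle b(x),x\rangle+\frac12\|\sigma(x)\|_{_F}^2$ and $H$ is the left-hand side of $({\rm \bf Haj})_q$. The paper avoids this step by using a ${\cal C}^2$ function equal to $|x|^q$ outside the unit ball.
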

\noindent The proof of this theorem and its  corollary hereafter are postponed to~\cref{subsec:proof2.10-2.11}.  A more striking version   of this result can be written under a more stringent control of the growth of~$\s$.

\begin{cor}[When all power moments are finite]\label{cor:A tous moments}   (a) If some positive $\kappa_1$, $\kappa_2$ and $C_\s$ exist such that
\begin{equation}\label{eq:corcontsbn}
\langle b(x),x\rangle \le \kappa_1-\kappa_2 |x|^2 \quad\textnormal{and} \quad  \|\s(x)\|_{_F} \le C_\s (1+|x|)^{1-\frac r 2}\quad \textnormal{for a given $0<r\le \frac{1}{2}$},
\end{equation}
then $({\rm \bf Haj)}_q$ holds for every $q>1$ and for every distribution $\mu_0$ such that $\mu_0 (e^{\lambda |\cdot|^r})<+\infty$, one has 
\[
\forall\, q\ge 1,\quad \sup_{ t\ge 0}\mu_0P_t(|\cdot|^q)\le C_{q,r}\sup_{ t\ge 0}\mu_0P_t(e^{\lambda |\cdot|^r})<+\infty. 
\] 
(b) If furthermore, all the other assumptions of \cref{thm:diffusionsconfluentes}$(\alpha)$ and $(\beta)$ are in force, 
$$
\big\| {\cal W}_p(\nu_t,\nu) \big\|_p\lesssim t^{-\frac{1}{2p}\wedge\left(\frac{1}{d}\right)^{-}}\quad \textnormal{in the sense of~\eqref{eq:toutunbis}}.
$$

\end{cor}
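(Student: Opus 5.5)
Part (a) is handled by verifying $({\rm \bf Haj})_q$ directly and then controlling exponential moments along the semigroup with a Lyapunov function; part (b) is obtained by letting $q\to+\infty$ in \cref{thm:diffusionsconfluentes} and \cref{thm:possiblynonmarkov}.

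\emph{Verification of $({\rm \bf Haj})_q$.} By Cauchy--Schwarz, $|\s(x)^{\top}x|^2/|x|^2\le \|\s(x)\|_F^2$. Hence the left-hand side of~\eqref{eq:Hajekdiff} is bounded by
\[
\kappa_1-\kappa_2|x|^2+\big(\tfrac12+|\tfrac q2-1|\big)C_\s^2(1+|x|)^{2-r}.
\]
Since $2-r<2$, the last term is $o(|x|^2)$. It can therefore be absorbed into $\frac{\kappa_2}{2}|x|^2$ up to a constant $\underline\kappa'_q$, which gives $({\rm \bf Haj})_q$ with $\bar\kappa'=\kappa_2/2$ for every $q>1$.

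\emph{Moment bound.} We use the Lyapunov function $V(x)=e^{\lambda(1+|x|^2)^{r/2}}$. Writing $u=(1+|x|^2)^{1/2}$, a direct computation gives
\[
{\cal L}V=\lambda V\Big[r u^{r-2}\langle b,x\rangle+\tfrac r2 u^{r-2}\|\s\|_F^2+O\big(u^{r-4}|\s^{\top}x|^2\big)+\lambda O\big(u^{2r-4}|\s^{\top}x|^2\big)\Big].
\]
We compare the sizes of the terms:
\begin{itemize}
\item the drift contributes $\le \lambda r V(\kappa_1u^{r-2}-\kappa_2 u^{r-2}|x|^2)\approx -\lambda r\kappa_2 V u^{r}$;
\item the diffusion terms are at most of order $\lambda V u^{r-2}u^{2-r}=\lambda V$;
\item the squared-gradient term is at most of order $\lambda^2 V u^{2r-4}u^{2}u^{2-r}=\lambda^2Vu^{r}$.
\end{itemize}
The dangerous piece is the squared-gradient term, which has the same order $u^r$ as the drift. \emph{This is the main point to check:} since $\lambda$ is free, we take $\lambda$ small enough that $\lambda C_\s^2\,(\text{const})<r\kappa_2/2$. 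This is presumably why the statement is phrased with "for $\lambda$" meaning some small $\lambda>0$; the constraint $r\le\frac12$ only leaves extra room. With this choice, ${\cal L}V\le \beta-\alpha V$ for some $\alpha,\beta>0$, because $-cVu^r+c'V\le \beta-\alpha V$ for large $u$. Dynkin's formula then gives $\mu_0P_tV\le \mu_0V+\beta/\alpha$. Using a localization by stopping times, which is justified since the solution is global, we obtain $\sup_t\mu_0P_t(e^{\lambda|\cdot|^r})\le C\sup_t\mu_0P_tV<\infty$. Finally $|x|^q\le C_{q,r}e^{\lambda|x|^r}$, which is the claimed bound.

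\emph{Part (b).} By part (a), $({\rm \bf Haj})_q$ holds for every $q$, and all moments of $\mu_0P_t$ are uniformly bounded in $t$. Therefore \cref{thm:diffusionsconfluentes}$(\alpha)$ and $(\beta)$ apply for every $q>1$. Through \cref{prop:markovcontraction} and \cref{cor:Contraction}, with $\psi(x)=\int|x-x'|\nu(dx')\le |x|+M_\nu(1)$, we get $\sup_t\mu_0P_t(\psi^q)<\infty$. Hence $C_q<+\infty$ in \cref{thm:possiblynonmarkov} for every $q$, with $\gamma=\frac12$ since $e(t)=e^{-\kappa t}$ is integrable. The "in particular" conclusion~\eqref{eq:toutunbis} of that theorem then yields the rate $t^{-\frac{1}{2p}\wedge(\frac1d)^-}$.
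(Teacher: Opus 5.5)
Your proposal is correct and follows essentially the paper's proof. The paper uses the same Lyapunov function $e^{\lambda(1+|x|^2)^{r/2}}$. It makes the same observation that the squared-gradient term $\frac{\lambda}{2}|\s^{\top}\nabla V_r|^2$ is of order $\lambda V_r$ and is absorbed by taking $\lambda$ small. It then gets a Dynkin/Gronwall bound on $\sup_t\mu_0P_t(e^{\lambda V_r})$ and applies \cref{thm:possiblynonmarkov} with $q$ as large as needed. Your explicit verification of $({\rm \bf Haj})_q$, via $|\s^{\top}x|^2/|x|^2\le\|\s\|_F^2$ and $\|\s\|_F^2=o(|x|^2)$, is a step the paper leaves implicit. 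You are also right that the argument works for every $r\in(0,2]$, so the restriction $r\le\frac12$ is never used.
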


\subsubsection{Fractional and  Gaussian driven SDEs}\label{sec:fracsdes}
In this section, we emphasize that the bounds of~\cref{thm:abstrait} may apply to the non-Markovian setting. We here consider the case of  SDEs driven by  Gaussian processes with stationary increments in the additive case ($i.e.$ when the ``diffusion'' coefficient is constant), {including SDEs driven by a fractional Brownian motion (fBm)}. Let $\sigma$ denote an invertible matrix. We consider the following SDE
\begin{equation}\label{fracSDE}
 dX_t=b(X_t) dt+\sigma dG_t,
 \end{equation}
where  $(G_t)_{t\ge0}$ is a continuous Gaussian process with stationary increments  which  admits the following moving-average representation:
\begin{equation}\label{eq:moving-average}
 G_t=\underbrace{\int_{-\infty}^0 g(t-u)-g(-u) dW_u}_{\bar{G}_t}+\underbrace{\int_0^t g(t-u) dW_u}_{\tilde{G}_t},\quad t\ge0,
 \end{equation}
where $(W_t)_{t\in\ER}$ denotes a two-sided ($\ER^d$-valued) Brownian motion and $g:(0,+\infty)\mapsto[0, +\infty)$ is a  measurable function satisfying (at least)
 \begin{align*}
\forall t\in\R_+,\quad \int_{0}^{+\infty} |g(t-u) - g(-u)|^2 du <\infty .
\end{align*}
Note that in short, this representation writes:
\begin{equation}\label{eq:def_noise}
\forall t\in\R_+,\quad G_t = \int_{-\infty}^0 g(-u) \left(d W_{t+u} - d W_u\right).
\end{equation}
It is well-known that this representation holds for ``almost'' all continuous Gaussian processes with stationary increments (it holds true when the process is \textit{purely non deterministic}, see \textit{e.g.}~\cite[Theorem 3.5]{hida} for details). In particular,  it holds for the fractional Brownian motion with:
$$ g(t)=t^{H-\frac{1}{2}}.$$
Note that when $H=1/2$, $i.e.$ when $g\equiv 1$, one retrieves the classical Brownian motion ($H=1/2$). In the sequel of this section, we assume for $H\in(0,1]$ and  $\zeta>3/2$ that

%\noindent $\HFBM$: 
%\begin{itemize}\item[$(i)$] $g$ is ${\cal C}^2$ on $(-\infty,0)$,
%\item[$ (ii) $]  $\exists t_0>0, \forall t\in(0,t_0], \quad g(t)= t^{H-\frac{1}{2}}$,
%\item[$(iii)$] $\exists C>0$ such that   $\forall u\in [1,+\infty)$,  $|g''(u)|\le C|u|^{-\zeta}.$
%\end{itemize}
\smallskip
\noindent $\HFBM$: $\begin{cases}
(i) &\textnormal{ $g$ is ${\cal C}^2$ on $(0,+\infty)$,}\\
(ii) &\textnormal{  $\exists t_0>0, \forall t\in(0,t_0], \quad g(t)= t^{H-\frac{1}{2}}$,}\\
(iii) &\textnormal{$\exists C>0$ such that   $\forall u\in [1,+\infty)$,  $|g''(u)|\le C|u|^{-\zeta}.$}
\end{cases}$

%\end{itemize}
%\begin{equation}
%|g''(u)|\le C|u|^{-\zeta}\quad \textnormal{if $u\in [t_0,+\infty)$.}$$
\smallskip For the fBm with Hurst parameter $H\in(0,1)$, $\HFBM$ is satisfied with $\zeta=\frac{5}{2}-H$ (Note that this trivially holds for all $\zeta>0$ when $H=1/2$). 
%\begin{rem} 
%The reader may be surprised by the condition 
%\end{rem}
 We also assume the following contraction condition with parameters $\kappa\in(0,+\infty)$ and $R,\lambda\in [0, +\infty)$:

\noindent $\mathbf{(S_{\kappa,R,\lambda})}$: $b$ is Lipschitz continuous and  $\forall x,y\in\ER^d$, $\big\langle b(x)-b(y),x-y\big\rangle\leq \begin{cases} -\kappa |x-y|^2& |x|,|y|\ge R\\
\lambda |x-y|^2& \textnormal{otherwise}.
\end{cases}
$\\

\noindent \noindent \textbf{Background on ergodicity of SDEs driven by stationary Gaussian processes.} Since~\cite{Hairer2005}, it is now well-known (at least when $(G_t)_{t\ge0}$ is a fBm) that even if it is certainly not Markovian, the solution to~\eqref{fracSDE} can be cast as the marginal of an infinite-dimensional Feller (homegeneous) process $Z_t:=(X_t,(W_{s+t})_{s\le 0})$ where $W$ denotes the two-sided Wiener process involved in~\eqref{eq:moving-average}. This process $Z$ takes values in  $\ER^d\times {\cal H}_{H}$ where ${\cal H}_{H}$ is a H\"older-type space the supporting the Wiener measure on $\ER_{-}$. {We call \textit{generalized initial condition} a probability $\Pi_0$ on $\ER^d\times {\cal H}_{H}$. Then, an invariant distribution $\Pi$ is a generalized initial condition which is (classically) invariant by the transitions of $(Z_t)_{t\ge0}$.}  The result below is due  to~\cite{Hairer2005} (see also~\cite{panloup-richard} for the extension to Gaussian processes).
\begin{prop} Assume $\HFBM$ with $\zeta>3/2$ and $\mathbf{(S_{\kappa,R,\lambda})}$ for some $\kappa, R,\lambda\in(0,+\infty)\times[0, +\infty)^2$.  Then,  $(Z_t)_{t\ge0}$ admits a unique~(\footnote{In this setting, one says that uniqueness holds if the distribution of the stationary induced process $(X_t)_{t\ge0}$ is unique. This implies in particular that $\nu$ is unique.}) invariant distribution $\Pi$. {Its first marginal, denoted by $\nu$ in the sequel, has moments of any order}.
\end{prop}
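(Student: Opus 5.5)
We follow Hairer's approach in~\cite{Hairer2005}, adapted to moving-average Gaussian noise as in~\cite{panloup-richard}. The argument has three steps: existence, moments, uniqueness.

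\emph{Existence and moments.} First we decompose $X_t=Y_t+\sigma\,\Theta_t$, where $\Theta$ is a stationary Ornstein--Uhlenbeck-type process driven by $G$, for instance $\Theta_t=\int_{-\infty}^t e^{-(t-s)}dG_s$. This is well defined thanks to $\HFBM(iii)$, which gives integrable decay of the increments' covariance. The difference $Y$ then solves a random ODE $\dot Y_t=b(Y_t+\sigma\Theta_t)+\sigma\Theta_t$. By $\mathbf{(S_{\kappa,R,\lambda})}$ together with the Lipschitz property of $b$, we get $\langle b(x),x\rangle\le \kappa'' - \kappa'|x|^2$ for some $\kappa'>0$. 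A Gronwall argument on $|Y_t|^2$ then yields
\[
\sup_{t\ge0}\E|X_t|^r<+\infty \quad\text{for every } r>0,
\]
because the Gaussian stationary term $\Theta$ has moments of all orders. These bounds give tightness of the Cesàro averages of the laws of $Z_t$ on $\R^d\times{\cal H}_H$. The second component is already stationary, being the shifted Wiener path. Since $Z$ is Feller, Krylov--Bogoliubov gives an invariant $\Pi$, and the uniform moment bounds pass to the first marginal $\nu$ by Fatou.

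\emph{Uniqueness.} We would prove uniqueness by an asymptotic coupling. Take two generalized initial conditions and solutions $X$, $\tilde X$ driven by $W$ and $\tilde W$. On the past the noises coincide; on the future we modify $\tilde W$ by a drift, $d\tilde W_t=dW_t+h_t\,dt$. The drift $h$ is chosen so that the induced noise difference first forces $\tilde X=X$ after a finite time. It must afterwards compensate the memory term $\int_{-\infty}^0[g(t-u)-g(-u)]\,d(\tilde W-W)_u$, so that $\tilde X_t=X_t$ for all later $t$. Here $\sigma$ invertible is used to invert the noise on the forcing window, and $\HFBM(ii)$ lets us invert the local kernel $t^{H-1/2}$ by fractional calculus.

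The main obstacle is controlling the compensating drift. One must show that $h$ decays like $t^{-\zeta+1/2}$ or so, making $h\in L^2$ as soon as $\zeta>3/2$; this follows from $|g''(u)|\le C u^{-\zeta}$. One must also show that the coupling succeeds with positive probability, with a Girsanov cost bounded uniformly over attempts. Contraction outside $B(0,R)$ brings the two solutions close; then a bounded-cost forcing on a unit window handles the region $|x|<R$. Repeated trials with geometric success probability yield a.s. eventual coupling of the future laws. That in turn forces the marginals of any two stationary solutions to agree, which gives uniqueness of the law of the stationary $X$, hence of $\nu$.
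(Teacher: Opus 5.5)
The paper gives no proof of its own here: the statement is quoted from \cite{Hairer2005} and \cite{panloup-richard}, so your outline targets the right argument. Your existence/moment step is sound. The bound $\langle b(x),x\rangle\le\kappa''-\kappa'|x|^2$ does follow from $\mathbf{(S_{\kappa,R,\lambda})}$ and the Lipschitz property of $b$. Gronwall on $Y=X-\sigma\Theta$, Krylov--Bogoliubov and Fatou then give an invariant $\Pi$ whose first marginal has all moments.

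The uniqueness step, however, has a genuine gap exactly where you locate the main obstacle. After the meeting time $\tau$, keeping $\tilde X=X$ requires $\int_0^t g(t-u)h(u)\,du$ to be constant for all $t\ge\tau$; this is the contribution of $h$ to $\tilde G_t-G_t$, forcing window included. That is a first-kind Volterra equation on an infinite horizon, so it involves $g$ on the whole half-line, while $\HFBM(ii)$ only fixes $g$ on $(0,t_0]$. Fractional calculus on the local kernel handles a forcing window of length at most $t_0$. That suffices for the single-time sticking in the proof of \cref{thm:fractrao}, but not for gluing forever. Solving window by window yields some $h$, but by itself gives no control of $\|h\|_{L^2(\R_+)}$. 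The existence of such an inverse with $L^2$ bounds is the kernel-inversion issue the paper itself flags (cf.\ Condition $\mathbf{(C3)}$ of \cite{panloup-richard}), and it does not follow from $|g''(u)|\le Cu^{-\zeta}$.

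Your heuristic already fails for the fBm. Inverting the Riemann--Liouville operator explicitly, the gluing drift after a forcing on a bounded window decays like $t^{-H-\frac12}=t^{\zeta-3}$ (with $\zeta=\frac52-H$). It therefore decays more slowly as $\zeta$ grows, not like $t^{-\zeta+\frac12}$. Besides, $t^{-\zeta+\frac12}$ is square integrable at infinity as soon as $\zeta>1$, so the threshold $\frac32$ cannot come from there. In the paper, $\zeta>\frac32$ plays two other roles. It legitimizes the pathwise integration by parts defining $\ell_t(x,w)$, hence the Feller property of $Z$ on $\R^d\times{\cal H}_H$. It also yields the decay $t^{-\zeta+\frac32}$ of the memory of a Brownian-type difference of pasts.

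The repeated-trials step is also incomplete. With coinciding pasts, the term $\int_{-\infty}^0[g(t-u)-g(-u)]\,d(\tilde W-W)_u$ you wrote vanishes. The memory that matters is the one created by $h$ itself; after a failed attempt it persists and enters the forcing and gluing costs of the next attempt. A Girsanov cost bounded uniformly over attempts therefore requires waiting times long enough for this residual memory to decay, quantified through the tail of $g$. This is the core of Hairer's construction (and the reason only algebraic rates come out), and your proposal does not address it.

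Finally, when $\lambda>0$, contraction outside $B(0,R)$ does not bring the two solutions close in general, since they keep re-entering $B(0,R)$ where their distance may grow. What you can use is that both return to a compact set. Then the forcing cost is bounded (not small), which is enough for a positive success probability.
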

In~\cite{Hairer2005}, that we refer to for more detailed definitions, the ergodicity (in total variation) is also proved under $\mathbf{(S_{\kappa,R,\lambda})}$ but in the general setting, the memory of the process combined with its roughness when $H$ is small, leads to very small rates of the order $t^{-\beta_H}$ with $\beta_H\in(0,1)$ (see ~\cite{hairer-pillai,Fontbona2017,DPT} for extensions to the multiplicative setting).

\medskip
\noindent  In~\cite{panloup-richard} and then in~\cite{sieber-li}, some ergodic results have been obtained in a {more friendly setting where the parameter $\lambda$ of $\mathbf{(S_{\kappa,R,\lambda})}$ is small or equal to $0$}. The result below is an adaptation of~\cite[Theorem 1.3]{sieber-li} to our assumptions {(the proofs  of all the statements of this section  are postponed to~\cref{annexproofTVfrac})}. {In all the following results, we assume that $(X_t)_{\ge0}$ has a generalized initial condition $\Pi_0$ whose first marginal has moments of any order so that $\E |X_0|^q<+\infty$ for all $q\ge 1$.}
\begin{prop}\label{prop:TVfrac} Assume $\HFBM$ with $\zeta>3/2$. 
%~(\footnote{By generalized initial condition, we mean a distribution on the product space $\ER^d\times {\cal H}_{H}$.}) 
For every $(\kappa,R)\in(0, +\infty)\times[0, +\infty)$, there exists $\lambda_0>0$ such that {if $\mathbf{(S_{\kappa,R,\lambda})}$ holds with $\lambda\le \lambda_0$}, there exists some positive  $\rho$ and $C$ such that
$$\|{\cal L}(X_t)-\nu\|_{TV}\le C e^{-\rho t}.$$
%$$\|{\cal L}(X_t|{\cal F}_s)-\pi\|_{TV}\le C e^{-\rho (t-s)} \int |X_s-y|\pi(dy).$$
\end{prop}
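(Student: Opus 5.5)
The plan is a coupling argument in the spirit of~\cite{Hairer2005} and~\cite{panloup-richard}, but made much simpler by the small-$\lambda$ assumption. Let $(X_t)$ start from the generalized initial condition $\Pi_0$, and let $(Y_t)$ start from the invariant distribution $\Pi$ of the proposition above, so that $Y_t\sim\nu$ for all $t$. Since the coupling inequality gives $\|{\cal L}(X_t)-\nu\|_{TV}\le \P(X_t\neq Y_t)$, it suffices to construct a coupling of the two driving noises $W$ and $W'$ and a random time $\tau$ with $X_t=Y_t$ for $t\ge\tau$ and $\P(\tau>t)\le Ce^{-\rho t}$.

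\textbf{Step 1 (synchronous contraction).} Drive both equations by the same $W$ on $[0,T]$. The additive noise cancels in $D_t=X_t-Y_t$, and $\mathbf{(S_{\kappa,R,\lambda})}$ gives
\[
\tfrac{d}{dt}|D_t|^2\le -2\kappa|D_t|^2+2(\kappa+\lambda)|D_t|^2\mbox{\bf 1}_{\{|X_t|<R \mbox{ or } |Y_t|<R\}}.
\]
To control the time spent in the ball of radius $R$, I would use Lyapunov/moment bounds, valid because $b$ is Lipschitz and mean-reverting outside a ball. When $\lambda\le\lambda_0$ is small, the bad term is dominated and $|D_t|$ decays exponentially in mean, up to a moment factor involving $|X_0|$ and the past of the noise.

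\textbf{Step 2 (hitting step).} Once $|D_T|$ is small, try to make the paths coincide on $[T,T+1]$ by adding a drift correction to $W'$, namely $dW'_t=dW_t+h_t\,dt$. The correction $h$ must be chosen so that $\sigma\tilde G$-contributions kill $D$; this uses $\HFBM(ii)$, with $g(t)=t^{H-1/2}$ near $0$, to invert the local Riemann--Liouville type operator. The cost of the shift is the total variation between ${\cal L}(W)$ and ${\cal L}(W')$, which is bounded through Girsanov by $C\|h\|_{L^2}$. Hence the step succeeds with probability bounded below once $|D_T|\le 1$.

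\textbf{Step 3 (memory after success).} This is the main obstacle. After the paths are glued, the past shift $h$ keeps perturbing the future increments through $\bar G$, via the kernel $g(t-u)-g(-u)$. Using $\HFBM(iii)$, $|g''(u)|\le Cu^{-\zeta}$ with $\zeta>3/2$, I would show that the induced drift discrepancy has square-integrable tails. It can then be compensated by a further Girsanov shift $h$ on $[T+1,\infty)$ with finite total cost, so the probability of staying coupled forever is positive. In the small-$\lambda$ regime, contraction absorbs these residual perturbations, and this is where $\lambda_0$ is tuned depending on $(\kappa,R)$.

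\textbf{Step 4 (iteration).} If a trial fails, wait a time long enough for the memory to become admissible again; the admissibility waiting time has exponential tails thanks to the contraction and the moment bounds for $\Pi_0$. Then restart at Step 1. A geometric-trials argument gives $\P(\tau>t)\le Ce^{-\rho t}$, which concludes the proof. This follows the structure of~\cite[Theorem~1.3]{sieber-li}, adapted to $\HFBM$ and $\mathbf{(S_{\kappa,R,\lambda})}$.
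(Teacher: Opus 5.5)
The gap is in Steps~3--4, where you build one coupling time $\tau$ after which the paths stay glued forever, restarting after failed trials. Every failed trial, and also the Girsanov shift of a successful one, leaves the two driving noises different on an interval. Through the moving-average kernel this discrepancy keeps forcing the future dynamics. In the notation of the proof of Theorem~\ref{thm:fractrao} it enters through $\dot\ell_t(x,w)-\dot\ell_t(y,\tilde w)$, which is only polynomially small in $t$ (the bound there is $\mathfrak{C}_\varepsilon(\tilde w-w)\,t^{-\zeta+\frac32+\varepsilon}$).

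The contraction from $\mathbf{(S_{\kappa,R,\lambda})}$ acts on $X-Y$, not on the noise. Under such a forcing, $|X_t-Y_t|$ is in general of the same polynomial order however small $\lambda$ is, and these discrepancies accumulate over successive failures. So your claim that ``the admissibility waiting time has exponential tails thanks to the contraction'' is unsupported. Controlling this accumulated memory is what forces growing waiting times in Hairer's construction and limits \cite{Hairer2005} to polynomial rates. Nothing in your sketch controls it at an exponential rate, in particular when $\zeta<5/2$, which includes the fBm. Step~3 adds an unneeded difficulty: keeping the paths glued on $[T+1,+\infty)$ requires inverting the Volterra operator with kernel $g$ on a half-line with $L^2$ control, whereas $\HFBM$ only prescribes $g$ near $0$.

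The missing observation is that $\|{\cal L}(X_t)-\nu\|_{TV}\le \P(X_t\neq Y_t)$ only requires coincidence at the single time $t$. So for each $t$ you may use a one-shot coupling, with no restart and no control after time $t$. This is the paper's route, following \cite{sieber-li}:

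\begin{itemize}
\item Couple $\Pi_0$ and $\Pi$ so that the noise pasts coincide, $w=\tilde w$. This is possible since both have the law of the Brownian past as second marginal, and your Step~1 needs it anyway for the noise to cancel in $X-Y$.
\item Then $\ell_t(x,w)-\ell_t(y,w)=x-y$, so there is no memory term at all. The synchronous $L^1$-contraction imported from \cite{ANPSieber} gives $\E|X_s-Y_s|\le Ce^{-cs}|x-y|$. What must be controlled there is the time spent inside $B(0,R)$, where $|X-Y|^2$ may grow at rate $2\lambda$; this is a matter of non-degeneracy of the noise rather than of Lyapunov bounds.
\item Finally, perform one sticking step on $[t-t_0,t]$, where only the values of $g$ on $(0,t_0]$ matter by $\HFBM(ii)$. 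Its cost in total variation is linear in $|X_{t-t_0}-Y_{t-t_0}|$. You need this quantitative form, not merely a success probability bounded below.
\end{itemize}

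This is \eqref{eq:TVbounddklqjq} with $w=\tilde w$. Integrating $Ce^{-ct}|x-y|$ against the coupling of initial conditions, which is finite by the moment assumptions on $\Pi_0$ and $\nu$, gives $Ce^{-\rho t}$. With this structure your Steps~1--2, made quantitative, suffice, and Steps~3--4 disappear.
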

%{The proof of this proposition and of all the results of this section are achieved in~\cref{annexproofTVfrac}.}
In the above result (and in the following ones), the main case of application is the ``convex'' case, which corresponds to $\lambda=0$ ({When $b=-\nabla U$, $\mathbf{(S_{\kappa,R,0})}$ holds true if $U$ is convex on $\ER^d$ and uniformly strongly convex outside a compact set}). The fact that $\lambda_0$ is positive means that it extends to ``slightly'' non-convex setting. By ``slight'', we mean that the lack of convexity is not sufficiently strong to prevent a contraction by synchronous coupling. 

\smallskip
\noindent {\textbf{Rate of conditional distributions and application of~\cref{thm:possiblynonmarkov}.}} Oppositely to the Markovian case, the exponential convergence to $\nu$ of the law of $X_t$ {recalled above does not extend to that of the conditional distributions $({\cal L}(X_t|{\cal F}_s))_{t\ge s}$} (in the Markovian setting, one expects $\|{\cal L}(X_t|{\cal F}_s)-\nu\|_{TV}$ to be of the order $e^{-\rho(t-s)}$). The following theorem is the cornerstone of the main result stated right after.
\begin{thm}\label{thm:fractrao} Assume $\HFBM$ with $\zeta>3/2$. 
%~(\footnote{By generalized initial condition, we mean a distribution on the product space $\ER^d\times {\cal H}_{H}$.})
 For every $(\kappa,R)\in (0, +\infty)\times[0, +\infty)$, there exists $\lambda_0>0$ such that {if $\mathbf{(S_{\kappa,R,\lambda})}$ holds with $\lambda\le \lambda_0$}, then for every  $\varepsilon>0$,  there exists a functional $\Upsilon$ defined on $\ER^d\times{\cal C}((-\infty,0],\ER^d)$ such that for every $0\le s\le t$, $\P$-$a.s.$, 
$$
\|{\cal L}(X_t|{\cal F}_s)-\nu\|_{TV}\le \Upsilon(X_s,(W_{s+t})_{s\le 0}) (1\vee (t-s))^{-\zeta+\frac{3}{2}+\varepsilon}
$$
and $\Upsilon(X_s,(W_{s+t})_{s\le 0})$ has {finite} moments at any order $q$. 
\end{thm}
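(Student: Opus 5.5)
We plan to prove the bound by a coupling argument in the spirit of~\cite{Hairer2005,panloup-richard,sieber-li}, conditioning on ${\cal F}_s$ and exploiting the moving-average representation~\eqref{eq:def_noise}. Fix $0\le s\le t$. Conditionally on ${\cal F}_s$, the increment of the noise on $[s,t]$ splits as $G_u-G_s = D^{(s)}_u + \int_s^u g(u-v)\,dW_v$. Here the ``past'' drift $D^{(s)}_u=\int_{-\infty}^s \big(g(u-v)-g(s-v)\big)dW_v$ is ${\cal F}_s$-measurable, and the second term is independent of ${\cal F}_s$. Thus ${\cal L}(X_t|{\cal F}_s)$ is the law at time $t-s$ of the solution of an SDE started at $X_s$ and driven by a fresh copy of $G$, perturbed by the deterministic (given ${\cal F}_s$) path $D^{(s)}$.

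Now let $(Y_u)$ be a stationary solution, driven by an independent two-sided Wiener process $\tilde W$ (with the same law as $W$), whose time-$t$ marginal is $\nu$. We compare the two solutions on $[s,t]$ using their common fresh innovation $(W_v)_{v\ge s}$ in the first part, and we measure the mismatch $\Delta_u = D^{(s)}_u-\tilde D^{(s)}_u$ coming from the two pasts.

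\emph{Step 1 (decay of the memory term).} Using $\HFBM(iii)$ and a stochastic Fubini / Kolmogorov-type estimate, we show that $|\dot\Delta_u|\le \Theta\,(u-s)^{-\zeta+\frac12+\varepsilon}$ for $u-s\ge1$, with $\Theta$ a functional of the pasts having moments of all orders. Indeed, $\partial_u(g(u-v)-g(s-v))$ involves $g''$ integrated against the Wiener process, and the $\varepsilon$ is lost in the pathwise (Garsia--Rodemich--Rumsey) control.

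\emph{Step 2 (contraction, then exact coupling).} Under $\mathbf{(S_{\kappa,R,\lambda})}$ with $\lambda\le\lambda_0$ (as in Proposition~\ref{prop:TVfrac}), synchronous coupling contracts $|X_u-Y_u|$ up to the forcing $\Delta$. This yields $|X_u-Y_u|\lesssim e^{-\rho(u-s)}|X_s-Y_s|+\int_s^u e^{-\rho(u-v)}|\dot\Delta_v|dv$. The plan is then to perform a Hairer-type ``kick'': on a window $[t-1,t]$ we modify the innovation $dW$ by a Girsanov shift $h$ so that the two solutions coincide at time $t$. Such a shift exists by invertibility of $\sigma$ and the local form $g(r)=r^{H-1/2}$ from $\HFBM(ii)$, which allows us to invert the Volterra operator on a short window. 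In addition, $h$ must compensate the residual memory $\Delta$ on the window. The total variation distance is then bounded by $\P$(coupling fails) $\lesssim \|h\|_{L^2}$ (Pinsker). Integrating the bound of Step~1 over the window gives a cost of order $\Theta'(t-s)^{-\zeta+\frac32+\varepsilon}$: one extra unit of exponent is lost because $\int_{t-s}^\infty r^{-\zeta+\frac12}dr$ reflects the cumulative influence of the past. We also need to handle the long-range effect of the shift $h$ after time $t$, but since we only look at time $t$, this is harmless.

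\emph{Step 3.} We set $\Upsilon$ to be the resulting functional of $(X_s,(W_{s+u})_{u\le0})$, after integrating out the stationary copy $Y$. Its moments of any order follow from the uniform moments of $X$ and $\nu$ (the preceding proposition) and the Gaussian moments of $\Theta$. For $t-s\le 1$, the trivial bound $2$ suffices. The main obstacle is Step~2: building the admissible shift $h$ with a controlled $L^2$ norm in terms of the polynomially decaying memory, especially for $H<1/2$, where inverting the Volterra kernel requires fractional-calculus estimates. There we would follow~\cite{Hairer2005} and~\cite[Theorem~1.3]{sieber-li}.
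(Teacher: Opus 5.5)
Your overall architecture matches the paper's. You write ${\cal L}(X_t|{\cal F}_s)$ as the law of the solution driven by a fresh innovation plus the ${\cal F}_s$-measurable memory path. Your stationary copy with independent past and shared future innovation is exactly the integration of $(y,\tilde w)$ against $\Pi$ in~\eqref{eq:tvcondpeiozp}. Then come a synchronous coupling and a Girsanov ``sticking'' on a short window, as in~\cite{ANPSieber}.

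The gap is in the quantitative core, i.e. where the exponent $-\zeta+\frac32+\varepsilon$ comes from. \textbf{Your Step~1 is false.} Given the two independent Brownian pasts, $\dot\Delta_u=\int_{-\infty}^s g'(u-v)\,d(W-\tilde W)_v$ is a centered Gaussian variable with variance $2\int_{u-s}^{+\infty}g'(r)^2\,dr$. For the fBm ($g'(r)\propto r^{H-\frac32}$, $\zeta=\frac52-H$) this is of exact order $(u-s)^{3-2\zeta}$. So no bound $|\dot\Delta_u|\le\Theta\,(u-s)^{-\zeta+\frac12+\varepsilon}$ with $\Theta\in L^2$ can hold. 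What decays like $(u-s)^{-\zeta+\frac12}$ is the \emph{second} derivative $\int_{-\infty}^s g''(u-v)\,d(W-\tilde W)_v$.

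\textbf{Step~2 then contains a compensating non sequitur.} Integrating a bound of order $(u-s)^{-\zeta+\frac12+\varepsilon}$ over a window of length one returns the same order, not $(t-s)^{-\zeta+\frac32+\varepsilon}$. Your heuristic $\int_{t-s}^{\infty}r^{-\zeta+\frac12}dr$ integrates $\ddot\Delta$ from $+\infty$, which has nothing to do with the kick window. Taken literally, Steps~1--2 would yield the (false) faster rate $(t-s)^{-\zeta+\frac12+\varepsilon}$. You reach the correct exponent only because the two errors cancel.

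The step you need is the paper's:
\begin{itemize}
\item Integrate by parts as in~\eqref{IPPeq} and differentiate in $t$, which gives $|\dot\ell_t(x,w)-\dot\ell_t(y,\tilde w)|\le\int_{-\infty}^0|g''(t-u)|\,|w_u-\tilde w_u|\,du$.
\item Bound $|w_u-\tilde w_u|\le\mathfrak{C}_\varepsilon(w-\tilde w)(1\vee|u|)^{\frac12+\varepsilon}$, where $\mathfrak{C}_\varepsilon(w)=\sup_{u\le0}|w_u|/(1\vee|u|)^{\frac12+\varepsilon}$.
\item Use $\HFBM(iii)$: for $t\ge1$ and $\varepsilon<\zeta-\frac32$, $\int_{-\infty}^0(t-u)^{-\zeta}(1\vee|u|)^{\frac12+\varepsilon}du\asymp t^{-\zeta+\frac32+\varepsilon}$.
\end{itemize}
So the extra unit of exponent is already paid in the derivative of the memory mismatch, through the $\sqrt{|u|}$ growth of the past Brownian paths. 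The condition $\zeta>\frac32$ is exactly what makes this integral converge. The $\varepsilon$ is lost in the growth functional $\mathfrak{C}_\varepsilon$ at $-\infty$ (which has moments of all orders), not in a local Garsia--Rodemich--Rumsey estimate.

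After that, the synchronous coupling gives $\E|\Phi_t(\ell(x,w))-\Phi_t(\ell(y,\tilde w))|\le C_\varepsilon\big(e^{-ct}|x-y|+\mathfrak{C}_\varepsilon(w-\tilde w)\,t^{-\zeta+\frac32+\varepsilon}\big)$. Neither this step nor the kick on a window of fixed length loses any further power.

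Two smaller corrections:
\begin{itemize}
\item The sticking window must have length at most the $t_0$ of $\HFBM(ii)$. The kernel is only known to equal $r^{H-\frac12}$ on $(0,t_0]$, so $[t-1,t]$ does not work if $t_0<1$.
\item Under $\mathbf{(S_{\kappa,R,\lambda})}$ with $R>0$, the synchronous contraction holds only in mean (via the small-$\lambda$ argument of the cited works), not pathwise as your displayed inequality suggests.
\end{itemize}
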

%The proof of~\cref{thm:fractrao} is postponed to~\cref{annexproofTVfrac}. 
\noindent The above result matches with Assumption $\ATROIS$ of~\cref{thm:possiblynonmarkov} which leads to the following theorem.
\begin{thm}[General case]\label{cor:fractrao} Let the assumptions of~\cref{thm:fractrao} be in force with $\zeta>{3/2}$. Then, 
\begin{itemize}
\item Very short Memory: If $\zeta>{5/2}$, $\ATROIS$ holds true for any $q\ge 1$ with $\gamma=1/2$. Thus, for any $q > 1$,  $\AUN$ holds for every $t_0>0$ with $\beta=\frac{1}{2}(1-\frac 1q)$, $\pi_t=\frac{1}{2}(\bar\nu_t+\nu)$ and $e\!\in \mathbb{L}^1([0,+\infty))$ defined by $e(t)=(1\vee t)^{-\zeta+\frac{3}{2}+\varepsilon}$ with $\varepsilon\in(0, \zeta-\frac{5}{2})$. Thus, 
$$\big\| {\cal W}_p(\nu_t,\nu) \big\|_p\lesssim t^{-\frac{1}{2p}\wedge\left(\frac{1}{d}\right)^{-}} \quad \textnormal{in the sense of~\eqref{eq:toutunbis}}.$$
\item Short and long Memory:  If $\zeta\in{(3/2,5/2})$, $\ATROIS$ holds true for any $q\ge 1$ {with $\gamma=\frac{1}{2}(\zeta-\frac{3}{2}-\ve)$ (with $\ve\in (0, \zeta-\frac{3}{2})$)}. Thus, for any $q > 1$ and $\gamma=\frac{1}{2}(\zeta-\frac{3}{2}-\ve)$,  $\AUNgamma$ holds for every $t_0>0$ with $\beta=\frac{1}{2}(1-\frac 1q)$, $\pi_t=\frac{1}{2}(\bar\nu_t+\nu)$ and $e$ defined by $e(t)=(1\vee t)^{-\zeta+\frac{3}{2}+\varepsilon}$ with $\varepsilon\in(0, \zeta-\frac{3}{2})$. Thus, for any $\ve>0$ a constant $C_{\varepsilon}$ exists such that
$$
\big\| {\cal W}_p(\nu_t,\nu) \big\|_p\le C_\varepsilon t^{-(\zeta-\frac{3}{2})\left(\frac{1}{2p}\wedge\frac{1}{d}\right)-\ve} .
$$
%{@Fabien: V\'erifier que \dots Moreover the rates of convergence of $\,{\cal W}_p^p(\nu_t,\nu)$ satisfy~\eqref{eq:boundsatoutmoment} i.e. those obtained in Corollary~\ref{cor:A tous moments}.}
\end{itemize}
\end{thm}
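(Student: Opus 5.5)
The plan is to derive Theorem~\ref{cor:fractrao} directly from Theorem~\ref{thm:fractrao} combined with Theorem~\ref{thm:possiblynonmarkov}. Only the three items of $\ATROIS$ need checking; the Wasserstein rates are then read off from \eqref{eq:boundsabstrait} and \eqref{eq:toutunbis}.

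\emph{Step 1: item (i).} Fix $\ve>0$ and set $\Upsilon_{t,s}:=\Upsilon(X_s,(W_{s+u})_{u\le 0})$, which does not depend on $t$, together with $e(t)=(1\vee t)^{-\zeta+\frac32+\ve}$. This $e$ is bounded by $1$. The random variable $\Upsilon_{t,s}$ is ${\cal F}_s$-measurable, since it is a functional of $X_s$ and of the Brownian path up to time $s$ (take ${\cal F}_s$ to be the filtration generated by $X_0$ and $(W_u)_{u\le s}$). Being constant in $t$, it is trivially ${\cal B}or([s,+\infty))\otimes{\cal F}_s$-measurable. Theorem~\ref{thm:fractrao} then gives exactly $\ATROIS(i)$.

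\emph{Step 2: item (iii).} We need $\sup_s\E\,\Upsilon_{t,s}^q<+\infty$ and $\sup_t\E|X_t|^q<+\infty$ for every $q$. Theorem~\ref{thm:fractrao} asserts finite moments of all orders, but the bound must also be uniform in $s$. I would obtain this by going back to its proof: $\Upsilon$ is built from polynomial functionals of $|X_s|$ and of weighted sup-norms of the past increments of $W$. The law of $(W_{s+u}-W_s)_{u\le 0}$ is shift-invariant, so the noise part is stationary. For the other part I would prove the uniform moment bound $\sup_t\E|X_t|^q<\infty$ by a Lyapunov argument on $|X_t-\sigma \tilde G_t|^2$, or by synchronous coupling with the stationary solution under $\mathbf{(S_{\kappa,R,\lambda})}$. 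This uses that the initial marginal has all moments and that Gaussian moments of $\sup_{[t,t+1]}|G-G_t|$ are uniform in $t$. Transferring these uniform bounds to $\Upsilon$ is where I expect the main obstacle: it depends on the explicit structure of $\Upsilon$ in the proof of Theorem~\ref{thm:fractrao} and on stationarity of the shifted noise. If needed, I would instead verify the weaker condition \eqref{eq:allevatrois} of Remark~\ref{rq:allevatrois}.

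\emph{Step 3: item (ii) and conclusion.} If $\zeta>5/2$, choose $\ve<\zeta-\frac52$. Then $\zeta-\frac32-\ve>1$, so $e\in\mathbb{L}^1$ and (ii) holds with $\gamma=\frac12$. If $\zeta\in(3/2,5/2)$, then
\[
\int_0^t e(s)\,ds\le 1+\frac{t^{\frac52-\zeta+\ve}}{\frac52-\zeta+\ve}=O\big(t^{1-2\gamma}\big),\qquad 1-2\gamma=\tfrac52-\zeta+\ve,
\]
that is, $\gamma=\frac12(\zeta-\frac32-\ve)\in(0,\frac12)$. Theorem~\ref{thm:possiblynonmarkov} then yields $\AUNgamma$ with $\beta=\frac12(1-\frac1q)$ and $\pi_t=\frac12(\bar\nu_t+\nu)$. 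Since every $q$ is allowed, \eqref{eq:toutunbis} gives the rate $t^{-2\gamma(\frac1{2p}\wedge\frac1d(1-\ve))}$. With $2\gamma=\zeta-\frac32-\ve$, and after renaming $\ve$, this is the announced bound $C_\ve t^{-(\zeta-\frac32)(\frac1{2p}\wedge\frac1d)+\ve}$.
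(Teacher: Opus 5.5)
Your proposal is correct and follows the paper's route: plug Theorem~\ref{thm:fractrao} into Theorem~\ref{thm:possiblynonmarkov}. The uniformity in $s$ that you flag is settled at the end of the paper's proof of Theorem~\ref{thm:fractrao}, where $\Upsilon$ is shown to be $C_\ve\big(\int|X_s-y|\,\nu(dy)+C_2\big((W_{u+s})_{u\le 0}\big)\big)$ with $C_2(w)=\E[\mathfrak{C}_\ve(w-W^-)]$, so its moments are controlled by $\sup_t\E|X_t|^q$ (quoted from \cite{Hairer2005,panloup-richard}) and by stationarity of Brownian increments, exactly as you anticipated. Your $+\ve$ in the final exponent is the right sign: this route only gives $2\gamma=\zeta-\frac32-\ve$, so the $-\ve$ printed in the statement should read $+\ve$ (compare the remark after Theorem~\ref{thm:specou}).
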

\begin{cor}[Fractional SDEs]\label{cor2:fracappl} {Assume that $(G_t)_{t\ge0}=(B_t^H)_{t\ge0}$, $i.e.$ is a fractional Brownian motion with Hurst parameter $H\in(0,1)$.} Then for every $(\kappa,R)\in(0, +\infty)\times[0, +\infty)$, there exists $\lambda_0>0$ such that {if $\mathbf{(S_{\kappa,R,\lambda})}$ holds with $\lambda\le \lambda_0$}, then for every  $\varepsilon>0$,  
%the previous statement applies with $\zeta=\frac 52-H.$ 
a constant $C_\varepsilon$ exists such that
$$\big\| {\cal W}_p(\nu_t,\nu) \big\|_p\le C_\varepsilon t^{-(1-H)\left(\frac{1}{2p}\wedge\frac{1}{d}\right)-\ve} .$$
\end{cor}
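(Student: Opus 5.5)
The plan is to derive \cref{cor2:fracappl} directly from \cref{cor:fractrao}. The fBm case is the specialisation $g(t)=t^{H-\frac12}$ of the moving-average representation~\eqref{eq:moving-average}, so the main task is to identify the parameter $\zeta$ in $\HFBM$ and to check which regime of \cref{cor:fractrao} applies.

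\emph{Checking $\HFBM$.} Items $(i)$ and $(ii)$ hold trivially, with any $t_0>0$, since $g$ is smooth on $(0,+\infty)$. For $(iii)$ we compute
\[
g''(u)=(H-\tfrac12)(H-\tfrac32)u^{H-\frac52},
\]
so $|g''(u)|\le C u^{-\zeta}$ on $[1,+\infty)$ with $\zeta=\frac52-H$. Since $H\in(0,1)$, we get $\zeta\in(\frac32,\frac52)$, which is the ``short and long memory'' case of \cref{cor:fractrao}. For $H=\frac12$ we have $g''\equiv0$, so $(iii)$ holds for every $\zeta$. We still choose $\zeta=2$ there, or any $\zeta$ close to $2$, so that a single formula covers all $H$.

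\emph{Applying \cref{cor:fractrao}.} Given $(\kappa,R)$, \cref{thm:fractrao} provides $\lambda_0$, and under $\mathbf{(S_{\kappa,R,\lambda})}$ with $\lambda\le\lambda_0$ its conclusion holds. The second bullet of \cref{cor:fractrao} then gives, for every $\ve'>0$,
\[
\|\mathcal W_p(\nu_t,\nu)\|_p\le C_{\ve'}\,t^{-(\zeta-\frac32)\left(\frac1{2p}\wedge\frac1d\right)-\ve'},
\]
up to the sign convention of $\ve$ in that statement, where the $\ve$ absorbs the loss coming from the $(1/d)^-$ in~\eqref{eq:toutunbis}. Substituting $\zeta-\frac32=1-H$ yields the announced exponent. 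The constant $\ve$ in the corollary collects the two small losses: the $\ve$ in $\gamma=\frac12(\zeta-\frac32-\ve)$, and the $\ve$ in the $t^{-\frac{2\gamma}{d}(1-\ve)}$ bound of~\eqref{eq:toutunbis}. Both can be made arbitrarily small because all moments $C_q$ are finite, as guaranteed by \cref{thm:fractrao}, the moments of $\Pi_0$ and the uniform moment bounds on $X$.

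\emph{Main obstacle.} The only delicate point is bookkeeping. We must verify that the exponent produced by \cref{thm:possiblynonmarkov} with $2\gamma=\zeta-\frac32-\ve$ equals $(1-H)\left(\frac1{2p}\wedge\frac1d\right)$ up to an arbitrarily small loss. We must also handle the borderline $p=\frac d2$ and the logarithmic case, which are absorbed by taking $q$ large and slightly reducing the exponent. Finally, $\ve$ has to be read as a loss in the exponent, that is, a bound of the form $t^{-(1-H)(\cdots)+\ve}$. This is harmless, since the statement is meant for arbitrarily small $\ve$ and the sign convention simply follows \cref{cor:fractrao}.
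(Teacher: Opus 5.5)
Your proposal is correct and follows the paper's argument: the paper notes that $\HFBM$ holds for the fBm kernel $g(t)=t^{H-\frac12}$ with $\zeta=\frac52-H\in(\frac32,\frac52)$, then invokes the short/long memory case of \cref{cor:fractrao}, where $\zeta-\frac32=1-H$. You are also right to read $\ve$ as a loss in the exponent, i.e.\ a bound of order $t^{-(1-H)(\frac{1}{2p}\wedge\frac1d)+\ve}$; this is the correct interpretation of the sign typo carried over from \cref{cor:fractrao}.
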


\begin{rem} $\blacktriangleright$  {Since $\HFBM$ holds with $\zeta=\frac{5}{2}-H$ for the fBm, ~\cref{cor2:fracappl} is a direct application of~\cref{cor:fractrao} in the case $\zeta\in{(3/2,5/2})$. 
This explains the terminology ``very short memory'' which emphasizes that  the fBm never falls in the case $\zeta>5/2$, even when $H<1/2$ (which is usually considered as a ``short memory'' setting in the literature). Nevertheless, the case $\zeta>5/2$ remains interesting for applications since it provides a setting where one may have the local behavior of a fBm but with a memory sufficiently small to preserve the rate orders of the Markovian setting.}
 %Thus, this theorem allows to consider processes which have the local behavior of the fBm but a shorter memory. For the case of fractional Brownian motions, we refer to ~\cref{sec:fBmres} below.

\smallskip
\noindent $\blacktriangleright$ Note that in $\HFBM(ii)$, we impose $g$ to be equal to $t^{H-\frac{1}{2}}$ near $0$. The extension to the case where $g$  is only close to $t^{H-1/2}$ is not clear. Actually, the $TV$-bound in ~\cref{thm:fractrao} is based on a non-trivial coupling argument which requires to invert the kernel related to the process. This inversion is possible {when $g$ coincides with the fractional kernel near $0$} but it is not clear that it extends to any function $g$ (see Condition $\mathbf{(C3)}$ of ~\cite{panloup-richard} for further details on this topic).
\end{rem}

\noindent {\textbf{The specific case of Gaussian stationary processes.}} When $b$ is an affine function, the process (denoted by $(Y_t)_{t\ge0}$ in the sequel) is Gaussian and in this case, it is possible to estimate ${\rm Cov}(f(Y_t),f(Y_s))$ for a given bounded measurable function $f$ without resorting to~\cref{thm:fractrao}, but 
using semi-explicit computations of ${\rm Cov}(Y_t,Y_s)$ combined with Hermite expansions (see~\cref{lem:gaussian}). When applies, this approach may lead to better bounds than the ones obtained in Corollary~\eqref{cor2:fracappl}. For instance, the following result holds for the fractional Ornstein-Uhlenbeck process:

 \begin{thm}\label{thm:specou} Let $(Y_t)_{t\ge0}$ denote the stationary one-dimensional fractional Ornstein-Uhlenbeck  process solution to:
$$ dY_t=-\lambda Y_t dt+\sigma dB_t^H,$$
where $\lambda$ and $\sigma$ are positive numbers. Then, for any $H\in(0,1)$, $\AUNgamma$ holds for any $q>0$, with $\pi_t=\nu$ (where $\nu$ denotes the-first marginal of-the invariant distribution of $(Y_t)_{t\ge0}$), $\beta=1/2$ and  
$$\gamma=\begin{cases} \frac{1}{2}&\textnormal{if $H<1/2$,}\\
1-H &\textnormal{if $H>1/2$}.
\end{cases}
$$
Thus,  
$$\big\| {\cal W}_p(\nu_t,\nu) \big\|_p\lesssim t^{-\frac{1}{p}(\frac 12\wedge\left(1-H\right))}.$$
%\tcb{Thus, for any $p>0$, $\|{\cal W}_p(\nu_t,\nu)\|_p$ satisfies the estimate~\eqref{eq:toutun} with $\beta=1/2$.}
\end{thm}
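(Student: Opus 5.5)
We have to establish $\AUNgamma(i)$ for a stationary Gaussian process with $\pi_t=\nu$ and $\beta=\frac12$. The plan is a direct variance computation. For a Borel set $A$, stationarity gives $\E\,\nu_t(A)=\nu(A)$, so by Cauchy--Schwarz
\[
\E|\nu_t(A)-\nu(A)|\le \Big(\tfrac{2}{t^2}\int_0^t\int_0^s {\rm Cov}\big(f(Y_s),f(Y_u)\big)\,du\,ds\Big)^{\frac12},\qquad f=\mbox{\bf 1}_A .
\]
It therefore suffices to show that $|{\rm Cov}(f(Y_s),f(Y_u))|\le \nu(A)\,|\rho(s-u)|$, where $\rho$ is the correlation function of $Y$. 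Then $\E|\nu_t(A)-\nu(A)|\le \nu(A)^{\frac12}\big(\frac{2}{t}\int_0^t|\rho(r)|dr\big)^{\frac12}$, and the trivial bound $\E|\nu_t(A)-\nu(A)|\le 2\nu(A)$ supplies the other term of the minimum.

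For the covariance bound I would use the Hermite expansion (the announced \cref{lem:gaussian}). Write $Y_s=\sqrt{v}\,\xi$ and $Y_u=\sqrt v\,\eta$, where $(\xi,\eta)$ is a standard Gaussian pair with correlation $\rho=\rho(s-u)$. Expanding $\tilde f(x)=f(\sqrt v x)=\sum_k c_k H_k(x)/\sqrt{k!}$ gives ${\rm Cov}=\sum_{k\ge1}c_k^2\rho^k$. Hence $|{\rm Cov}|\le|\rho|\sum_{k\ge1}c_k^2=|\rho|\,{\rm Var}_\nu(f)\le|\rho|\,\nu(A)$. This is Gebelein's inequality, and it is the place where Gaussianity replaces the conditional TV estimate.

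The remaining work is to estimate the correlation of the stationary fOU process. For $H=\frac12$ it is $e^{-\lambda r}$. For $H\ne\frac12$ it is classical (Cheridito--Kawaguchi--Maejima) that ${\rm Cov}(Y_0,Y_r)\sim C_{H,\lambda,\sigma}\,r^{2H-2}$ as $r\to\infty$, and the correlation is bounded by $1$. I would derive this from the spectral representation: $Y$ has spectral density proportional to $|x|^{1-2H}/(\lambda^2+x^2)$, and the large-$r$ asymptotics come from the singular behaviour at $x=0$. Alternatively one can write $Y_t=\sigma\int_{-\infty}^t e^{-\lambda(t-u)}dB^H_u$ and integrate by parts twice against the covariance of $B^H$. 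This asymptotic step is the main technical obstacle, but it can be quoted. Consequently:
\begin{itemize}
\item for $H<\frac12$, $|\rho|$ is integrable, so $\int_0^t|\rho|\le C$ and $\gamma=\frac12$;
\item for $H>\frac12$, $\int_0^t|\rho|\le C t^{2H-1}$, so $\frac1t\int_0^t|\rho|\le Ct^{2H-2}$ and $\gamma=1-H$.
\end{itemize}
In both cases the constant $K_{t_0,\beta}$ is uniform in $A$.

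Finally, $\nu$ is a centred Gaussian law, so it has moments of every order and $\Mpiq=M_\nu(q)<+\infty$ for all $q$. Since $\pi_t=\nu$ in every case, \cref{prop:fund} and \cref{thm:abstrait} apply with $\beta=\frac12$ and $d=1$. Choosing $q>2p$ large, \eqref{eq:toutun} gives the claimed rate $t^{-\gamma/p}$ when $p>\frac12$. For $p\le\frac12$ one gets $t^{-2\gamma}$ up to a log factor, which is at least as good, and this is consistent with the $\lesssim$ notation.
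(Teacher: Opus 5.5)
Your proof is correct and follows essentially the paper's route: a Hermite-expansion covariance bound, the Cheridito--Kawaguchi--Maejima decay $r^{2H-2}$, and the variance computation together with the trivial bound $2\nu(A)$ to obtain $\AUNgamma$ with $\pi_t=\nu$, $\beta=\frac12$, after which \cref{thm:abstrait} is applied. Your Gebelein form $|{\rm Cov}|\le|\rho|\,{\rm Var}_\nu(f)$ is in fact slightly cleaner than \cref{lem:gaussian}, since $|\rho|^k\le|\rho|$ makes the restriction $|\rho|\le\frac12$ and the factor $2$ unnecessary.

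The one slip is in your last paragraph. For $p\le\frac12$, the bound $t^{-2\gamma}$ (with a log at $p=\frac12$) is weaker than the displayed $t^{-\gamma/p}$, not ``at least as good''. So in that range the conclusion holds only in the sense of \eqref{eq:toutunbis}, with the exponent capped by $\frac1d=1$, which is also all that the paper's ``Thus'' actually delivers.
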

\begin{rem} $\blacktriangleright$ {When $H>1/2$, the result ``only'' allows to remove the $\varepsilon$ in \cref{cor2:fracappl}, $i.e.$ to replace the exponent $1-H-\varepsilon$ by $1-H$. When $H<1/2$, there is a real gain since we replace $1-H-\varepsilon$ by $1/2$. }

\noindent $\blacktriangleright$ {This theorem may extend to the multidimensional case by using that the coordinates are independent (in this very specific case) and that, for two probabilities $\mu$ and $\nu$ on $\ER^d$ with marginals $(\mu_i)_{i=1}^d$ and $(\nu_i)_{i=1}^d$, ${\cal W}_p(\mu,\nu)\le \sum_{i=1}^d {\cal W}_p(\mu_j,\nu_j)$. this would lead to 
$$\big\| {\cal W}_p(\nu_t,\nu) \big\|_p\le C d t^{-\frac{1}{p}(\frac 12\wedge\left(1-H\right))}$$
where $C$ is independent of $d$.}

\noindent $\blacktriangleright$ Note that the invariant distribution $\nu$ is a Gaussian centered distribution with variance $\sigma_H^2=\frac{\sigma^2}{\lambda^{2H}} H\Gamma(2H)$ (it can be deduced from~\cite[Remark 2.4]{cheridito03} using the fact that $\int_0^{+\infty}  \frac{x^{1-2H}}{1+x^2} dx=\frac{\pi}{2 \sin(\pi H)}$).
\end{rem}

\section{Proof of  Theorem~\ref{thm:abstrait}}
\label{sec:proofmain}
We need the following technical lemma, adapted from~\cite[Proof of Theorem~1, Step~1]{FG}.
\begin{lem}\label{lem:tech}Let $p>0$,  $\b\!\in (0, 1/2]$. Let $t>0$ be fixed and let $L: (0, +\infty)\to \R_+$ be defined by 
\[
L_t(u):=\sum_{\ell\ge 0} 2^{-p\ell} \min\Big(u^{\frac{1}{2\b}}, \big(u/t)^{\frac 12}2^{d\ell(1-\b)}\Big).
\]

The function $L$ satisfies the following upper-bounds depending on $p$, $\b$ and the dimension $d$ where $C_{p,\b,d}>0$ denotes    a positive constant only depending on $p$, $\b$, $d$ that may vary from line to line.
\begin{itemize}
\item If $p>d(1-\b)$, then 
\[
L_t(u) \le C_{p, \b,d} \min\Big( u^{\frac{1}{2\b}}, \Big( \frac{u}{t}\Big)^{\frac 12}\Big).
\]
%\item  If $p = d(1-\b)$, then 
%$$
%L(u) \le C_{p,\b,d} \min\bigg( u^{\frac{1}{2\b}},\Big(\frac{u}{t}\Big)^{1/2}\log\big(1+ ut^{\frac{\b}{1-\b}}\big)\bigg).
%$$
\item  If $p=d(1-\b)$ then
	\begin{align}
	\label{eq:p=d(1-beta)a} L_t(u) &\le C_{p,\b,d}
	%\tcr{\Big( \frac{u}{t}\Big)^{\frac 12}}  \big(1+\big(\log(u\,t^{\frac{\b}{1-\b}})\big)^+\big) 
	 \Big(u^{\frac{1}{2\b}} \mbox{\bf 1}_{\{u\,t^{\frac{\b}{1-\b}}  \le 1\}}+ \Big(\frac ut\big)^{\frac 12} \big(1+\log(u\,t^{\frac{\b}{1-\b}})\big) \mbox{\bf 1}_{\{ut^{\frac{\b}{1-\b}}  \ge 1\}}\Big)\\
		\label{eq:p=d(1-beta)b} & \le C_{p,\b,d}
	%\tcr{\Big( \frac{u}{t}\Big)^{\frac 12}}  \big(1+\big(\log(u\,t^{\frac{\b}{1-\b}})\big)^+\big) 
	\Big(\frac ut\Big)^{\frac 12} \big(1+\big(\log(u\,t^{\frac{\b}{1-\b}})\big)^+\big) 
	\end{align}
	%\tcr{F: Attention, je crois qu'il y avait une erreur dans cette in\'egalit\'e.}
%	which implies that
%	\[
%	L(u) \le C_{p,\b,d} \min\Big(u^{\frac{1}{2\b}}, \Big(\frac ut\Big)^{\frac 12} \Big)  \log(1+u\,t^{\frac{\b}{1-\b}}).
%	\]
%
\item If $p < d(1-\b)$, then
$$
L(u) \le C_{p,\b,d} u^{\frac{1}{2\b}}\min\Big( 1,\big(ut^{\frac{\b}{1-\b}}\big)^{-\frac{p}{2\b d}}\Big).
$$
\end{itemize}
\end{lem}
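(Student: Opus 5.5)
The plan is to compare, for each fixed $u>0$, the two competing terms inside the minimum as functions of $\ell$, and to split the series at the crossover index. Write $a=u^{\frac{1}{2\b}}$ (independent of $\ell$) and $b_\ell=(u/t)^{\frac12}2^{d\ell(1-\b)}$, which increases geometrically in $\ell$. The crossover level $\ell^*\in\R$ is defined by $a=b_{\ell^*}$, i.e.
\[
2^{d\ell^*(1-\b)}=u^{\frac{1}{2\b}-\frac12}t^{\frac12}
\quad\Longleftrightarrow\quad
2^{\ell^*}=\big(u\,t^{\frac{\b}{1-\b}}\big)^{\frac{1}{2\b d}},
\]
using $\frac{1}{2\b}-\frac12=\frac{1-\b}{2\b}$. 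Thus $\ell^*\le 0$ exactly when $u\,t^{\frac{\b}{1-\b}}\le 1$; in that case the minimum equals $a$ for every $\ell\ge0$ and $L_t(u)\le a\sum_\ell 2^{-p\ell}=C_p\,u^{\frac1{2\b}}$. This handles the trivial regime in all three cases, and the trivial bound $L_t(u)\le C_p u^{\frac1{2\b}}$ holds in general. When $\ell^*>0$, I split
\[
L_t(u)\le \sum_{0\le\ell\le\ell^*}2^{-p\ell}\,b_\ell+\sum_{\ell>\ell^*}2^{-p\ell}\,a,
\]
and estimate the second sum as a geometric tail: $a\,2^{-p\ell^*}$ up to a constant $C_p$.

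For the first sum, the ratio is $2^{\ell(d(1-\b)-p)}$, which leads to three cases.
\begin{itemize}
\item If $p>d(1-\b)$, the series $\sum_{\ell\ge0}2^{-p\ell}b_\ell$ converges and is bounded by $C\,(u/t)^{1/2}$ with no splitting needed. Together with the trivial bound, this gives the stated minimum.
\item If $p=d(1-\b)$, the first sum equals $(u/t)^{1/2}(1+\lfloor\ell^*\rfloor)$, and $\ell^*=\frac{1}{2\b d\log 2}\log(u t^{\frac{\b}{1-\b}})$. The tail term is $a2^{-p\ell^*}=b_{\ell^*}2^{-p\ell^*}=(u/t)^{1/2}$, since the exponents cancel. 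This yields \eqref{eq:p=d(1-beta)a}. Then \eqref{eq:p=d(1-beta)b} follows because on $\{u t^{\frac{\b}{1-\b}}\le1\}$ one has $u^{\frac1{2\b}}\le (u/t)^{1/2}$, which is just the inequality $a\le b_0$ rewritten.
\item If $p<d(1-\b)$, the first sum is dominated by its last term, $C\,b_{\ell^*}2^{-p\ell^*}=C\,a\,2^{-p\ell^*}$, which is the same order as the tail. Hence $L_t(u)\le C u^{\frac1{2\b}}2^{-p\ell^*}=Cu^{\frac1{2\b}}(ut^{\frac{\b}{1-\b}})^{-\frac{p}{2\b d}}$. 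Combined with the trivial bound, this gives the minimum with $1$.
\end{itemize}

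The only delicate point is bookkeeping: checking the exponent identity $a=b_{\ell^*}$ and handling non-integer $\ell^*$ via floor and ceiling. Since $b_\ell$ is monotone, replacing $\ell^*$ by $\lfloor\ell^*\rfloor$ or $\lceil\ell^*\rceil$ only changes constants by factors $2^{p}$ or $2^{d(1-\b)}$. In the critical case, one must also check that the number of terms is $1+\lfloor \ell^*\rfloor\le C\big(1+\log(ut^{\frac{\b}{1-\b}})\big)$, which holds with $C$ depending on $\b,d$.
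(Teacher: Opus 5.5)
Your proposal is correct and follows essentially the same route as the paper. Both split the series at the crossover index, bound the tail geometrically by $u^{\frac{1}{2\b}}2^{-p\ell^*}$, and bound the head by the terms $(u/t)^{\frac12}2^{d\ell(1-\b)}$, with the three cases governed by the sign of $d(1-\b)-p$. The only difference is that the paper works directly with the integer index $\lceil \ell^*\rceil$, which it calls $\ell_{t,u,\b,d}$, while you use the real crossover $\ell^*$. This makes the case $p<d(1-\b)$ slightly cleaner than the paper's estimates involving $\lceil\cdot\rceil$ and $\log(1+x)$.
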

%\tcr{J'ai choisi de mettre directement la preuve du th\'eo 2.3 (ce qui \'evite la r\'ep\'etition de proof of th\'eo 2.3) et de faire r\'ef\'erence au lemme technique à la suite avec sa preuve. N'h\'esite pas à rechanger.} 
\paragraph{Proof of~\cref{thm:abstrait}.} 
	%\textcolor{red}{@Gil: Warning la cste depend de $\eta$ avec $t\ge \eta$}. 
	We consider successively the cases $\gamma=\frac 12$ and $\gamma\in(0,1/2)$. Note that the constants with capital letter $K$ may vary from line to line.
	
\noindent \textbf{Case $\gamma=\frac 12$.} The proof is based  on the upper-bound~\eqref{eq:UpperW_p}
 for the mean ${\cal W}_p$-distance and~\cref{lem:tech} applied with $u= 2^{-2 n \b q}\!\in (0,1)$, $n\ge 0$ to derive the bound for $t\ge\tzero$, 
	\begin{equation}\label{eq:meanWL1bis}
	\E\, {\cal W}_p(\nu_t,\nu)^p \le K_{\tzero,\b,p,q,d} \sum_{n\ge 0} 2^{pn}L_t(2^{-2 n \b q}),
	\end{equation}
	where $K_{\tzero,\b,p,q,d}= K_{\tzero, p,\b,d} {\Mpiq}$ for convenience  throughout the proof. 
	
	We will inspect successively all the cases depending on $p$, $d$, $\b$ and their sub-cases depending on $q$ if necessary.
	
	\smallskip
	\noindent $\blacktriangleright$ {\em Case $p>d(1-\b)$}. One derives from ~\cref{lem:tech} that
	\[
	\E\, {\cal W}_p(\nu_t,\nu)^p \le K_{\tzero,p,\b,d,q} \sum_{n\ge 0} 2^{pn}\min\Big(2^{-nq} , \frac{2^{-n\b q}}{\sqrt{t/\tzero}} \Big).
	\]
	(up to a change of the constant when $\tzero\!\in (0,1)$ by a factor $1/\sqrt{\tzero}$).
	
	 $\quad$ -- If $q>\frac{p}{\b}$ then, setting  $ c_{p,q,\b} = \sum_n 2^{(p-\b q)n}<+\infty$ so that
	 \[
	\E\, {\cal W}_p(\nu_t,\nu)^p \le K_{\tzero,\b,p,q,d} c_{p,q,\b}t^{-\frac 12}.
	\] 
	
	 $\quad$ -- If $q\!\in \big(p, \frac{p}{\b}\big)$, we introduce the tipping index $\ds n_0(t):=n_{\tzero}(t)= \Big \lceil \frac{\log(t/\tzero)}{2(1-\b)q\log 2}\Big\rceil\ge 0$ since $t\ge\tzero$, which satisfies
	 \[
	 n\ge n_0(t) \quad \Longleftrightarrow\quad 2^{-nq} \le\frac{2^{-n\b q}}{\sqrt{t/\tzero}}.
	 \]
	 %\tcr{ok}
	 Consequently
	 \begin{align*}
	 \E\, {\cal W}_p(\nu_t,\nu)^p &\le K_{\tzero,p,\b,d,q} \left [\Big(\sum_{n=0}^{n_0(t)-1}2^{(p-\b q)n}\Big) t^{-\frac 12} + \sum_{n\ge n_0(t)} 2^{(p-q)n} \right]\\
	 					     & \le K_{\tzero,p,\b,d,q} \left [2^{(p-\b q)n_0(t)} t^{-\frac 12} +2^{(p-q)n_0(t)} \right].
	 \end{align*}
	 One easily checks that 
	 \[
	 t^{-\frac12}2^{(p-\b q)n_0(t)} \le c_{\tzero, p,q,\b}t^{\frac{p-\b q}{2q(1-\b)}-\frac 12}=  c_{\tzero, p,q,\b}t^{-\frac{q-p}{2q(1-\b)}}
	 \]
	 and that, up to the real constant, the same bounds holds for  	 $2^{(p-q)n_0(t)}$. Hence
	 \[
	  \E\, {\cal W}_p(\nu_t,\nu)^p \le K''_{\tzero,p,\b,d,q} t^{-\frac{q-p}{2q(1-\b)}}.
	 \]
	 
	\noindent $\blacktriangleright$ {\em Case $0<p<d(1-\b)$}. Inserting the estimate for $L(2^{-2\b q n})$ into~\eqref{eq:meanWL1bis}, we get, up to change of the constant $K_{\tzero,p,\b,d,q}$, that
	\[
	  \E\, {\cal W}_p(\nu_t,\nu)^p \le K_{\tzero,p,\b,d,q} \sum_{n\ge 0} 2^{(p-q)n} \min \Big(1, 2^{\frac{q pn}{d}}\big(t/\tzero\big)^{-\frac{p}{2(1-\b) d}}\Big).
	\]
	
	$\quad$ -- If $q>\frac{dp}{d-p}$, i.e. if  $p-q+\frac{qp}{d}<0$, it is clear that $c_{p,q,d}= \sum_n 2^{(p-q+\frac{qp}{d})n}<+\infty$. Hence 
	%\tcr{$\nu_t$ ou $\nu_t$ dans la suite ?}
	\[
	  \E\, {\cal W}_p(\nu_t,\nu)^p \le K_{\tzero,p,\b,d,q} c_{p,q,d} (t/\tzero)^{-\frac{p}{2(1-\b) d}}= K_{\tzero,p,\b,d,q}\,t^{-\frac{p}{2(1-\b) d}}.
	\]
	
	$\quad$ -- If $q\!\in (p,\frac{dp}{d-p})$, then one checks that $2^{\frac{q pn}{d}}\big(t/\tzero\big)^{-\frac{p}{2(1-\b) d}}\ge 1$ iff $n\ge n_0(t)$ (as defined in the previous case). Hence, elementary computations show that
	\[
	  \E\, {\cal W}_p(\nu_t,\nu)^p \le K''_{\tzero,p,\b,d,q} \big( 2^{(p-q)n_0(t)}+ 2^{(p-q+\frac{qp}{d})n_0(t)}(t/\tzero)^{-\frac{p}{2d(1-\b)}}\big)\le K_{\tzero,p,\b,d,q}^{(3)} t^{-(1-\frac p q)\frac{1}{2(1-\b)}}.
	\]

	\noindent $\blacktriangleright$ {\em Case $p=d(1-\b)$}.% \tcr{verifier si la petite typo sur l'inegalite du lemme n'a pas de consequences.} 
	
	$\quad $  -- If $q>\frac{p}{\b} =d(\frac 1 \b-1)$, using~\eqref{eq:meanWL1bis},~\eqref{eq:p=d(1-beta)b} from  Lemma~\ref{lem:tech} after noting that $1+ (\log x)^+\le \kappa_0 \log(2+x/ t_0^{\frac{\b}{1-\b}})$ for every $x\ge t_0$, we  get for every $t\ge t_0$
	\begin{align*}
	\E\, {\cal W}_p(\nu_t,\nu)^p &\le K_{\tzero,\b,p,q,d} \,t^{-\frac 12}\sum_{n\ge 0} 2^{(p-\b q)n}\log\big({2}+2^{-2\b n q}(t/t_0)^{\frac{\b}{1-\b}}\big)\\
	& \le K_{\tzero,\b,p,q,d} \,t^{-\frac 12}\log\big(2+(t/t_0)^{\frac{\b}{1-\b}}\big)\sum_{n\ge 0} 2^{(p-\b q)n
	%\log\big(\tcb{2}+2^{-2\b n q}(t/t_0)^{\frac{\b}{1-\b}}\big).
	}
	\end{align*}
	then $p-\b q<0$ so that, for every $t\ge \tzero$, 
	\[
	\E\, {\cal W}_p(\nu_t,\nu)^p \le K_{\tzero,p,\b,d,q} \frac{\log(2+(t/t_0)^{\frac {\b}{1-\b}})}{\sqrt{t}}\le K_{\tzero,p,\b,d,q} \frac{\log(1+t)}{\sqrt{t}}.
	\]
		
	$\quad$ -- If $q\!\in (p,\frac{p}{\b}\big)$ then $p-\b q>0$. We temporarily set $\theta= t/t_0$ to alleviate notation. Still using the tipping index $n_{0}(t)$ combined with~\eqref{eq:p=d(1-beta)a}  from  Lemma~\ref{lem:tech} 
	%$n_{0}(t)= \min\{n: 2^{-2\beta qn }(t/t_0)^{\frac{\beta}{1-\beta}}\le 1\}$, 
	we have
	\begin{align*}
	\E\, {\cal W}_p(\nu_t,\nu)^p& \le K_{\tzero,p,\b,d,q}\bigg(\sum_{n\ge n_{0}(t)} 2^{pn} (2^{-2\beta qn})^{\frac{1}{2\beta}}+t_0^{-\frac 12}\theta^{-\frac 12}\sum_{n=0}^{n_{0}(t)-1}2^{(p-\b q)n} \big(1+ \log\big(\underbrace{\theta^{\frac{\b}{1-\b}}2^{-2\b q n}}_{\ge 1}\big)\big)\bigg).
	%\\
	%& \le K'_{\tzero,p,\b,d,q}\Big(2^{(p-q)n_{0}(t)}+ t^{-\frac 12}2^{(p-\b q)n_{0}(t)} \theta\big(1+ \log\big(\theta^{\frac{\b}{1-\b}}\big)\big)\Big)
	\end{align*}
	First note that, using the definition of $n_0(t)$, 
	\[
	\sum_{n\ge n_{0}(t)} 2^{pn} (2^{-2\beta qn})^{\frac{1}{2\beta}} = \sum_{n\ge n_0(t)} 2^{(p-q)n}\le K_{\tzero,p,\b,d,q}2^{(p-q)n_0(t)}\le  K_{\tzero,p,\b,d,q} \theta^{-\frac{q-p}{2(1-\beta)q}}.
	\]
	Then note that 
	\begin{equation}\label{eq:ineqtech0}
	\theta^{-\frac 12}\sum_{n=0}^{n_{0}(t)-1}2^{(p-\b q)n} \le \theta^{-\frac 12}\frac{ 2^{(p-\b q)n_0(t)} }{2^{p-\b q}-1}\le K_{\tzero,p,\b,d,q}\theta^{-\frac{q-p}{2(1-\beta)q}}
	\end{equation}
	The last term to deal with ($n_0(t)\ge 1$ except if $t=t_0$ since $\theta = t/t_0\ge 1$) is as follows
	\begin{align*}
	\theta ^{-\frac 12}\sum_{n=0}^{n_{0}(t)-1}2^{(p-\b q)n}\log\big(\theta^{\frac{\b}{1-\b}}2^{-2\b q n}\big)&= \theta ^{-\frac 12}\sum_{n=0}^{n_{0}(t)-1}2^{(p-\b q)n}\Big(\frac{\b}{1-\b}\log \theta-2\b q n \log 2\Big)
	\end{align*}
	%One checks that both $2^{(p-q)n_{0}(t)}\le  \theta^{-\frac{q-p}{2(1-\beta)q}}$
	%\le  t ^{-\frac{q-p}{2(1-\beta)q}}$ 
	%and $ t^{-\frac 12}2^{(p-\beta q)n_{0}(t)} \le 2^{p-\b q}t_0^{-1/2}( t/t_0) ^{-\frac{q-p}{2(1-\beta)q}}$. 
%	Using that for every $y>0$ and $\ve \!\in (0,1)$, $\log(y) \le \frac{1}{\ve}\,y^{\ve}$, we derive that
%	%for every $t\ge \tzero$, 
%	\begin{align*}
%	t^{-\frac 12}\sum_{n=0}^{n_{0}(t)-1}2^{(p-\b q)n}  \log\big(\underbrace{\theta^{\frac{\b}{1-\b}}2^{-2\b q n}}_{\ge 1}\big)&\le c_{\b,p,q,\ve} t^{-\frac 12+\frac{\b \ve}{1-\b}}\sum_{n\ge 0} 2^{(p-\b q (1+2\ve))n}= c'_{\b,p,q,\ve} t^{-\frac 12+\frac{\b \ve}{1-\b}}.
%	\end{align*}
%	For $\ve >0$  small enough,  the constant $c'_{\b,p,q,\ve}$ is finite and $\frac 12-\frac{\b \ve}{1-\b}> \frac{q-p}{2(1-\b)}$ since since $q<p/\b$. 
By the definition of $n_0(t)$ we derive that $\log \theta \le n_0(t) 2(1-\b) q \log 2$ so that 
$$
\frac{\b}{1-\b}\log \theta-2\b q n \log 2\le  2\b q \log 2+ 2\b q  \log 2 (n_0(t)-1-n).
$$
Consequently, one gets 
	\begin{align*}
\theta ^{-\frac 12}\sum_{n=0}^{n_{0}(t)-1}2^{(p-\b q)n}\Big(\frac{\b}{1-\b}\log \theta-2\b q n \log 2\Big)& \le  2\b q \log 2\Big( \theta ^{-\frac 12}\sum_{n=0}^{n_{0}(t)-1}2^{(p-\b q)n}\\
&\quad + \theta ^{-\frac 12}\sum_{n=0}^{n_{0}(t)-1}2^{(p-\b q)(n_0(t)-1-n)}n\Big)
	\end{align*}
where  we reversed the indexation	in the second sum. Now 
	\begin{align*}
	\theta ^{-\frac 12}\sum_{n=0}^{n_{0}(t)-1}2^{(p-\b q)(n_0(t)-1-n)}n&= \theta^{-\frac 12}2^{(p-\b q)(n_0(t)-1)}\sum_{n=0}^{n_{0}(t)-1}2^{(\b q-p)n}n\\
	& \le  \theta^{-\frac 12}2^{(p-\b q)n_0(t)}\sum_{n\ge 0}2^{(\b q-p)n}n.
	\end{align*}
The series on the right-hand side is clearly finite since $\b q -p<0$ and $\theta^{-\frac 12}2^{(p-\b q)n_0(t)}$ has been treated with in~\eqref{eq:ineqtech0}. Finally, collecting all these bounds yields
	\begin{align*}
	\E\, {\cal W}_p(\nu_t,\nu)^p& \le 
	%K'_{\tzero,p,\b,d,q} t^{-(1-p/q)\frac{1}{2(1-\b)}}\le 
	 K_{\tzero,p,\b,d,q} t^{-\frac{q-p}{2q(1-\b)}}. 	%+ t^{-\frac 12}\sum_{n=0}^{n_0(t)-1} 2^{(p-\b q)n}   \log \big(2(t/\tzero)^{\frac{\b}{1-\b}}2^{-2\b q n}\big)\bigg)
	\end{align*}
%	since $\log(1+x)\le \log 2x$ if $x\ge 1$ and $(t/\tzero)^{\frac{\b}{1-\b}}2^{-2\b q n}\ge 1$ when  $0\le n\le n_0(t)-1$. Then taking advantage of what we did in the previous cases, we know that $ t^{-\frac 12}\log \big(2(t/\tzero)^{\frac{\b}{1-\b}}) \sum_{n=0}^{n_0(t)-1} 2^{(p-\b q)n}$ is bounded by $ t^{-(1-p/q)\frac{1}{2(1-\b)}} \log \big(2(t/\tzero)^{\frac{\b}{1-\b}}) $ up to a positive constant.
%	The last term to be inspected reads $t^{-\frac 12}R(t)$ with $R(t)= -2\b q \sum_{n=0}^{n_0(t)-1} 2^{(p-\b q)n} n$ is clearly negative. Finally
%	\[
%	\E\, {\cal W}_p(\nu_t,\nu)^p\le K''_{\tzero,p,\b,d,q} t^{-\frac{1-p/q}{2(1-\b)}}\log(1+ t).
%	\]
\textbf{Case $\gamma\in(0,1/2)$.} By~\eqref{eq:UpperW_p}, we have in this case:
\begin{equation*}
	\E\, {\cal W}_p(\nu_t,\nu)^p \le K_{\tzero,\b,p,q,d} \sum_{n\ge 0} 2^{pn}L_{t^{2\gamma}}(2^{- \beta n q}).
	\end{equation*}
	In other terms, if we set $\tau= t^\gamma$, we retrieve the right-hand member of ~\eqref{eq:meanWL1bis}. It follows that the bounds obtained in the case $\gamma=1/2$ extend to $\gamma\in(0,1/2]$ through the change of variable $\tau= t^\gamma$. The result follows.
	\hfill$\Box$
%\paragraph{A preliminary lemma.} We first need to establish 

%\paragraph{Proof of~\cref{lem:tech}.}

\begin{proof}[Proof of~\cref{lem:tech}.]
%\label{sec:prooflemtech}
%	\noindent {\bf Proof.} 
 First note that $\sum_{\ell\ge 0}2^{-p\ell} =\frac{1}{1-2^{-p}}$ so that $L_t(u) \le C_{p, \b,d}  u^{\frac{1}{2\b}}$.

\smallskip
\noindent $\blacktriangleright$ Case $p>d(1-\b)$. Note that $\sum_{\ell\ge 0}2^{-(p-d(1-\b))\ell} <+\infty$, it is clear that 
\[
L_t(u) \le C_{p, \b,d} \min\Big( u^{\frac{1}{2\b}}, \Big( \frac{u}{t}\Big)^{\frac 12}\Big).
\]
$\blacktriangleright$ Case $p = d(1-\b)$. One has 
\[
u^{\frac{1}{2\b}}\le \big(u/t)^{\frac 12}2^{d\ell(1-\b)} \Longleftrightarrow \ell\ge \ell_{t,u,\b,d}:= \bigg\lceil  \frac{(\log(ut^{\frac{\b}{1-\b}}))^+}{2d\b\log 2}\bigg\rceil.
\]
Hence, using that $p = d(1-\b)$, we get 
\begin{align*}
L_t(u)&\le u^{\frac{1}{2\b}}\sum_{\ell\ge \ell_{t,u,\b,d}}2^{-p\ell} +  \ell_{t,u,\b,d}\Big(\frac ut\Big)^{1/2}\\
& \le  u^{\frac{1}{2\b}}\frac{2^{-p \ell_{t,u,\b,d}}}{1-2^{-p}}+  \ell_{t,u,\b,d}\Big(\frac ut\Big)^{1/2}.
\end{align*}

 -- If $u \,t^{\frac{\b}{1-\b}}<1$, then $\ell_{t,u,\b,d}=0$ and  $L_t(u) \le \frac{ u^{\frac{1}{2\b}}}{1-2^{-p}}$. (Note that, under this  condition, $u^{\frac{1}{2\b}} \le (u/t)^{1/2}$).
 
 -- If $u\, t^{\frac{\b}{1-\b}}\ge 1$, then $\ell_{t,u,\b,d}\ge  \frac{\log(ut^{\frac{\b}{1-\b}})}{2d\b\log 2}$ so that
 \[
  u^{\frac{1}{2\b}}2^{-p \ell_{t,u,\b,d}} \le   u^{\frac{1}{2\b}}e^{- \frac{\log(u t^{\frac{\b}{1-\b}})}{2d\b\log 2}p} = \Big(\frac{u}{t}\Big)^{1/2}.
 \]
 Consequently, there exists a constant $C_{p,\b,d}>0$ such that 
 %\textcolor{red}{@G: ne correspond pas \`a l'\'enonc\'e}
 \[
 L_t(u) \le C_{p,\b,d} \Big(\frac{u}{t}\Big)^{1/2}\big(1+\log(ut^{\frac{\b}{1-\b}})\big).
 \]
 
 $\blacktriangleright$ Case $p < d(1-\b)$.  We consider the same $\ell_{t,u,\b}$ as in the former case and we get 
	\[
	L_t(u)\le C_{p,\b,d}\Big( u^{\frac{1}{2\b}}2^{-p\ell_{t,u,\b}}+\Big(\frac ut\Big)^{\frac 12}2^{(d(1-\b)-p)\ell_{t,u,\b,d}}\mbox{\bf 1}_{\{\ell_{t,u,\b}\ge 1\}}\Big)
	\]
	for some real constant $C_{p,\b,d}>0$.
	
	First assume that $ut^{\frac{\b}{1-\b}}> 1$. One checks that
	\[
	\frac{\log(ut^{\frac{\b}{1-\b}})}{2\b d\log 2}\le \ell_{t,u,\b,d}\le 1+\frac{\log(1+ut^{\frac{\b}{1-\b}})}{2d\b \log 2}
	\]
	where we used that $\lceil x\rceil<x+1$ for the right inequality. 
	
	 Hence
	\[
	2^{-p\ell_{t,u,\b}}\le \big(ut^{\frac{\b}{1-\b}}\big)^{-\frac{p}{2d\b}}
	%\le \big(\tfrac 12+\tfrac12 ut^{\frac{\b}{1-\b}}\big)^{-\frac{p}{2d\b}}\le c_{p,d,\b}\big(1+ ut^{\frac{\b}{1-\b}}\big)^{-\frac{p}{2d\b}}
	\]
	and 
	\[
	2^{d(1-\b)-p)\ell_{t,u,\b,d}}\le c_{p,\b,d}(1+ut^{\frac{\b}{1-\b}})^{\frac{(1-\b)d-p}{2d \b}}\le  c'_{p,\b,d}(ut^{\frac{\b}{1-\b}})^{\frac{(1-\b)d-p}{2d \b}}
	\]
	since $(1+x)^a\le 2^a x^a$ for $x\!\in[1,+\infty)$ when $a\ge 0$. Then, elementary computations yield
	\[
	\Big(\frac ut\Big)^{\frac 12}2^{d(1-\b)-p)\ell_{t,u,\b,d}}\le c'_{p,\b,d}u^{\frac {1}{2\b}} (ut^{\frac{\b}{1-\b}})^{-\frac{p}{2d\b}}
	\]
	leading to the upper-bound
	\[
	L_t(u) \le C_{p,\b,d}u^{\frac{1}{2\b}}\min\Big(1 ,\big(ut^{\frac{\b}{1-\b}}\big)^{-\frac{p}{2d\b}} \Big).
	\]
	When $ut^{\frac{\b}{1-\b}}\le1$, $\ell_{t,u,\b}=0$ so that  the above bound still holds true.
	%\hfill$\Box$
\end{proof}
	%The proof is postponed to after that of~\cref{thm:abstrait} hereafter.
	
%\noindent {\bf Remark.}	Note that, if $u^{\frac{1}{2\b}} \le \Big(\frac ut\big)^{\frac 12} $, then $ut^{\frac{\b}{1-\b}}\le 1$ so that, in the critical case 
%	$p=d(1-\b$ we couldve written ourienqyality
%	\[
%	L_t(u) \le C_{p,\b,d} \min\Big(u^{\frac{1}{2\b}}, \Big(\frac ut\Big)^{\frac 12} \Big) .
%	\]
\section{{Proof of general criterions of \cref{sec:genecrite}}}
\label{sec:4bis}
\subsection{Proof of Proposition~\ref{prop:Poincare} (Poincar\'e setting)}
\label{sec:Poincarre}

%\tcr{@fabien: chgt d'ordre qui prend en compte le changement d'ordre des r\'esultats eux-m\^emes.}
Let $f_A= \mbox{\bf 1}_A\!\in {\mathbb L}^2(\nu)$. Note that $\E_\nu f_A=Ê\nu(A)$ and ${\rm Var}_\nu(f_A) = \nu(A)(1-\nu(A))$. We start from~\eqref{eq:repere} in the preceding proof of Theorem~\ref{thm:possiblynonmarkov}, namely
\begin{align}
\nonumber \E |\nu_t(A)-\nu(A)|^2&= \frac{2}{t^2}\int_{\{0\le s\le u\le t\}}\E\big[\E[ f_A(X_u)-\nu(f_A)| {\cal F}_s](f_A(X_s)-\nu(A))\big] du \,ds\\
\nonumber						    & =  \frac{2}{t^2}\int_{\{0\le s\le u\le t\}} \E \big(P_{u-s}f_A(X_s)-\nu(A)\big)(f_A(X_s)-\nu(A))duds\\
 \label{eq:majoPoincare}& \le   \frac{2}{t^2}\int_0^t\int_{s}^t \|P_{u-s}f_A(X_s)-\nu(A)\|_2\| f_A(X_s)-\nu(f_A)\|_2   duds,
\end{align}
owing to Cauchy-Schwarz inequality. It follows from   assumption~$\ADEUX$  that
\[
\|P_{u-s}f_A(X_s)-\nu(A)\|^2= {\rm Var}_{\nu}(P_{u-s}f_{_A})_2\le e(u-s){\rm Var}_{\nu}(f_{_A}).
\]
On the other hand, one has
\[
 \| f_A(X_s)-\nu(f_A)\|^2_2 =  \nu(A)(1-\nu(A))= {\rm Var}_\nu(f_A) .
\]
Inserting the above two bounds in~\eqref{eq:majoPoincare} yields
\begin{align*}
 \E |\nu_t(A)-\nu(A)|^2 &\le \frac{2\nu_t}{t^2}\int_{\{0\le s\le u\le t\}}e(u-s)   du\,ds\\
 				& =  \frac{2{\rm Var}_{\nu}(f_A)}{t^2}\int_{0}^t \int_0^{t-s} e(v)dv \, ds\\
				  & \le \frac{2{\rm Var}_{\nu}(f_A)}{t}\int_0^t e(v)dv 
\end{align*}
so that 
\[
\|\nu_t(A)-\nu(A)\|_1\le \|\nu_t(A)-\nu(A)\|_2 \le C_{e}\frac{\sqrt{\nu(A)}}{\sqrt{t}}.
\]
On the other hand it is clear that 
\begin{align*}
\|\nu_t(A)-\nu(A)\|_1&\le  \Big\| \frac 1t \int_0^t \mbox{\bf 1}_{_A}(X_s)ds\Big|_1+ \nu(A)\\
&\le \frac 1t \int_0^t \E_{\nu} \mbox{\bf 1}_{_A}(X_s)ds+ \nu(A) =\frac 1t \int_0^t \nu(A)ds+ \nu(A) = 2 \nu(A).
\end{align*}
Hence, for every $t_0>0$,  $\AUN$\emph{(i)}  is satisfied with $\pi_t=\nu$ and $\beta = \frac 12$. Hence the bounds for Wasserstein distance follow from~\cref{thm:abstrait} with this value for $\b$.\hfill$\Box$

\subsection{{Proof of Theorem~\ref{thm:possiblynonmarkov}}}\label{subsec:proofthm2.5}Let us first deal with the $q$-moment of $\nu$. For every $M>0$, 
\[
\nu(|\cdot|^q) = \lim_{M\to+\infty} \nu(|\cdot|^q\wedge M)
\]
by Beppo Levi's monotone convergence theorem. It follows from $\ATROIS(i)$ applied with $s=0$ that 
\[
\nu(|\cdot|^q\wedge M) \le \E\big(|X_t|^q\wedge M\,|\, {\cal F}_0\big) + M\Upsilon_{t,0} e(t).
\]
Taking expectation then yields
\begin{align*}
\nu(|\cdot|^q\wedge M) &\le \E\big(|X_t|^q\wedge M\big) + M\E\,\Upsilon_{t,0} e(t)\\
&\le  \E |X_t|^q  + M\E\,\Upsilon_{t,0} e(t).
%&\le C+ M\E\,\Upsilon_{t,0} e(t)
\end{align*}
Averaging in time over $[0,t]$ the above inequality yields for any $t\ge t_0$:
\[
\nu(|\cdot|^q\wedge M) \le {C_{2,q}}+ M\E\,\Upsilon_{t,0} \frac 1t \int_0^t e(s)ds
%\quad\textnormal{ with }\quad  C_{t_0,q}=\sup_{t\ge t_0} \E |X_t|^q.
\]
where {${C_{2,q}}$ is defined in~\eqref{eq:allevatrois}}. {Now it follows from  $\ATROIS(ii)$, that  $\frac{1}{t} \int_0^t e(s)ds = O(t^{-2\gamma})\to 0$ as    $t\to+\infty$ so that: 
$$
\forall M>0,\quad  \nu(|\cdot|^q\wedge M) \le {C_{2,q}}<+\infty.
$$
 Combined with the first inequality, this proves that $\nu(|\cdot|^q)\le  {C_{2,q}}$ and 
 $${\Mpiq= 1\vee \sup_{t\ge0} M_{\pi_t}(q)}\le   {C_{2,q}}\quad \textnormal{where }\pi_t=\frac{1}{2}(\bar{\nu_t}+\nu).$$}

%\noindent \tcr{@Fabien : \`A partir de l\`a, d\'ebute la preuve  + ou - ``alternative" : c'est la  m\^eme qu'avant en intervertissant juste $q$ et $\frac{q}{q-1}$. Comme cela j'ai l'impression que plus $q$ est grand et plus $\beta$ est proche de $1/2$ ce qui me semble la bonne option (que j'ai pas toujours adopt\'ee en premi\`ere intention, j'avoue).}

\noindent Now,
\begin{equation}\label{eq:constanteexplca1}
\E|\nu_t(A)-\nu(A)|\le \nu_t(A)+\nu(A)=2\pi_t(A).
\end{equation}
On the other hand, setting $f_A= {\bf 1}_A$ and using $\ATROIS$\emph{(i)}, we have
\begin{align}
\nonumber \E |\nu_t(A)-\nu(A)|^2&= \frac{2}{t^2}\int_{\{0\le s\le u\le t\}}\E[ f_A(X_u)-\nu(f_A)(f_A(X_s)-\nu(f_A))] du \,ds\\
	\label{eq:repere}			    &= \frac{2}{t^2}\int_{\{0\le s\le u\le t\}}\E\big[\E[ f_A(X_u)-\nu(f_A)| {\cal F}_s](f_A(X_s)-\nu(f_A))\big] du \,ds\\
\nonumber & \le \frac{2}{t^2}\int_0^t\int_s^t \ee(u-s) \E[ \Upsilon_{u,s} |1_A(X_s)-\nu(A)|] du \,ds.
\end{align}
By H\" older inequality and Fubini--Tonelli's theorem, it follows that
\begin{align*}
\E |\nu_t(A)-\nu(A)|^2&\le \frac{2}{t^2}\int_0^t \int_s^t \ee(u-s) \|\Upsilon_{u,s}\|_q \,\E[1_A(X_s)-\nu(A)|^{\frac{q-1}{q}}]^{1-\frac{1}{q}} du \,ds\\
&\le \frac{2}{t}\left(\sup_{0\le s\le t} \int_0^{t} \ee(v) \|\Upsilon_{(v+s)\wedge t,s}\|_{q} dv \right)\frac{1}{t} \int_0^t\E[|1_A(X_s)-\nu(A)|^{\frac{q-1}{q}}]^{1-\frac{1}{q}}ds\\
&\le \frac{2C_{1,q}}{t^{2\gamma}}\left(\frac{1}{t} \int_0^t\E[|1_A(X_s)-\nu(A)|^{\frac{q}{q-1}}]ds\right)^{1-\frac{1}{q}},
%\left({\nu_t(A)^\frac{1}{q}+\nu(A)\right)
\end{align*}
where in the last line we used Jensen's inequality and the constant $C_{1,q}$ introduced in~\eqref{eq:allevatrois} (see~\cref{rq:allevatrois}). This constant is clearly  finite under $\ATROIS$\emph{(ii)-(iii)}. Note that 
\begin{align}\label{eq:decompA}
\nonumber \E[1_A(X_s)-\nu(A)|^{\frac{q}{q-1}}] &=  \E[1_A(X_s)](1-\nu(A))^{\frac{q}{q-1}}+\nu(A)^{\frac{q}{q-1}} \E[1_{A^c}(X_s)]\\
&\le \E[1_A(X_s)]+\nu(A)
\end{align}
since $\frac{q}{q-1}>1$. Hence
\begin{equation}\label{eq:constanteexplca2}
\frac 1t\int_0^t \E[|1_A(X_s)-\nu(A)|^{\frac{q}{q-1}}]ds \le 2\, \pi_t(A)
%= \frac{4 C_{1,q} \pi_t(A)}{t}.
\end{equation}
so that, finally
\[
\big\| \nu_t(A)-\nu(A) \big\|_2\le \frac{2^{1-\frac{1}{2q}}}{{t^{\gamma}}}\sqrt{C_{1,q}}\,\pi_t(A)^{\frac 12(1-\frac 1q)}.
\]
Combining~\eqref{eq:constanteexplca1} and~\eqref{eq:constanteexplca2} and noting that $\|\cdot\|_1\le \|\cdot\|_2$, we conclude that $\AUN$\emph{(i)} is satisfied with $K=2 (1\vee \sqrt{C_{1,q}})$. As concerns $\AUN$\emph{(ii)}, we remark that,
$$
 M_{\pi_t}(q)=\frac{1}{2}\left(\frac{1}{t}\int_0^t \E[|X_s|^q] ds+\nu(|\,.\,|^q)\right)\le \frac{1}{2}\left(C_{2,q}+\nu(|\,.\,|^q)\right),$$
where $C_{2,q}$ has been introduced  in~\eqref{eq:allevatrois} (see~\cref{rq:allevatrois}). But since $\AUN$\emph{(i)} is satisfied, $(\nu_t)$ weakly converges to $\nu$ so that by a classical argument, 
$$
\nu(|\,.\,|^q)\le \limsup_{t\rightarrow+\infty}  \frac{1}{t}\int_0^t \E[|X_s|^q] ds\le C_{2,q}
$$
and hence $\sup_{t\ge0} M_{\pi_t}(q)\le C_{2,q}<+\infty$ under $\ATROIS$\emph{(iii)}. Hence $\AUN$ holds  for every $\tzero>0$ as announced. The bounds in~\eqref{eq:boundsabstrait} then straightforwardly follow from~Theorem~\ref{thm:abstrait} for this value of $\b$.
%\end{proof}

\subsection{Proofs of   Propositions~\ref{prop:Markovsetting}, \ref{prop:markovcontraction} and Corollary~\ref{cor:Contraction}}\label{sec:proof4.2}
%Let $(\F_t)_{t\ge 0}$ denote the  natural filtration of $(X_t)_{t\ge 0}$.

\noindent {\bf Proof of Proposition~\ref{prop:Markovsetting}.} Let $({\cal F}_t)_{t\ge 0}$ denote the augmented natural filtration of $(X_t)_{t\ge 0}$ so that $(X_t)_{t\ge 0}$ is a homogeneous $({\cal F}_t)$-Markov process with semi-group $(P_t)_{t\ge 0}$. Let us prove briefly that $\nu $ is the unique invariant distribution of $(P_t)$. Let $f:\R^d\to [0,1]$ be a bounded Borel function.  Then, it follows from~\eqref{eq:TVboundby}$(i)$ that
%It is clear that $P_t(f)\to \nu(f)$ for every such $f$ .
$$\Big| \frac 1t\int_0^t \!\! P_s(f)(x)ds-\nu(f)\Big|\le \frac{\|f\|_{\sup}}{t}\int_0^t\! \|P_s(x,dy)-\nu\|_{TV} ds\le  \frac{\|f\|_{\sup}}{t}\psi(x)\int_0^{+\infty}\hskip-0.25cm e(s)ds\; \mbox{as $t\to+\infty$}.
$$ 

Hence, one classically derives that $\nu$ is invariant and  if $\nu'$ is also an invariant distribution then, still for any Bounded Borel function $f$,  $\nu'(f) =  \nu'P_t(f)(x) = \int P_t(f)(\xi)\nu'(d\xi)  \to \nu(f)$ so that $\nu' = \nu$.

\smallskip
Let $g:\R^d\to [0,1]$ be a bounded Borel function. One has for every $s$, $t\!\in [0,+\infty)$, $t\ge s$,
\begin{align*}
|\E\,(g(X_t)\,|\, {\cal F}_s)-\nu(g)| = |P_{t-s}g(X_s)-\nu(g)|\le \psi(X_s)e(t-s)
\end{align*}
so that we may set $\Upsilon_{t,s}= \psi(X_s)$ to fulfill~$\ATROIS(i)$. Condition~$\ATROIS(iii)$ straightforwardly  follows from~\eqref{eq:TVboundby}$(i)$ since $\|\psi(X_t)\|_q= \big(\mu_0P_t(\psi^q)\big)^{\frac 1q}$ and $\E\,|X_t|^q= \mu_0P_t |\cdot|^q$. \hfill$\Box$
%\[
%C_{1,q}= \int_0^{+\infty}\big(\mu_0P_s(\psi^q)\big)^{\frac 1q}e(s)ds.\hskip 9cm\Box
%\]

\bigskip
\noindent {\bf Proof of Proposition~\ref{prop:markovcontraction}.} We check the assumptions of Proposition~\ref{prop:Markovsetting}. Let $A$ be a Borel set of $\R^d$. By assumption $(i)$ we know that $f_A(x) = \E_x {\bf 1}_A(X_{\theta_0})= P_{t_0}(\mbox{\bf 1}_{_A})(x)$ is Lipschitz continuous with $[f_A]_{\rm Lip}\le c(\theta_0)$. Now let $t\ge \theta_0$. One has $P_t\mbox{\bf 1}_A= P_{t-\theta_0}f_A$. Consequently
\[
|P_t\mbox{\bf 1}_A(x)-\nu(A)|\le |\mu_0P_{t-\theta_0}f_A(x)-\nu (f_A)|\le c(t_0) {\cal W}_1(\mu_0P_{t-\theta_0}(x,dy),\nu)\le c(t_0)\psi(x) e(t-\theta_0).
\]
Consequently, setting $\tilde e(t):= e(t-\theta_0)$, $t\ge \theta_0$, one has for every $t\ge \theta_0$,
\[
\big\|P_t(x,dy)-\nu\big\|_{TV} \le \psi(x) \tilde e(t).
\]
On the other hand, it is clear that, when $t\!\in [0,\theta_0]$, $\big\|P_t(x,dy)-\nu\big\|_{TV} \le 2\le  2\big(1\vee\psi(x)\big)$. Set $\tilde e (t)= e(t-\theta_0) $, $t\!\in [0,\theta_0]$, and one straightforwardly checks  that $\tilde e \!\in \mathbb{L}^1([0, +\infty))$ so that $\ATROISbis(i)$ is fulfilled. Finally, $\ATROISbis(ii)$ follows as in the above proof. \hfill$\Box$
%\[
%\int_0^{+\infty}\big(\mu_0P_t(\psi\vee 2)^q)^{\frac 1q}\tilde e(t)dt <+\infty.
%\]
%This completes the proof.\hfill $\Box$

%\smallskip \tcr{@F \&G sous la condition actuelle en TV, la condition sous forme s\'erie brute est mal commode ici; 
%Il faudrait pouvoir le commencer en $t_0$ ou qq chose comme \c ca mais on doit pouvoir bidouiller. M\^eme si en fait on s'en fout.}

\medskip
\noindent {\bf Proof of~\cref{cor:Contraction}.} Let us consider a  test Lipschitz continuous function  with $[f]_{\rm Lip}\le 1$,
\begin{align*}
P_tf(x) -\nu(f) = P_tf(x) -\nu P_tf&=   P_tf(x) -\int_{\R^d}\nu(dx')P_tf(x')\\
			      	     & =  \int_{\R^d} \big(P_tf(x) - P_tf(x')\big)\nu(dx')\\
				      &\le \int_{\R^d} \W_1(P_t(x,dy),P_t(x',dy))\nu(dx')\\
				      &\le \int_{\R^d} \Psi(x,x')\nu(dx')\ee(t):=\psi(x)e(t),
\end{align*}
where we used the Monge--Kantorovich representation of $\W_1$-distance in the third line and $\ATROISter(i)$ in the last one. Taking the supremum over test-functions $f$ yields the announced result.\hfill$\Box$ 

\section{Applications to Brownian diffusions}
\label{sec:5bis}
\subsection{Proof of~\cref{prop:weakreverting}} \label{subsec:proof2.12} We apply~\cref{prop:Markovsetting}. To this end, we appeal to~\cite[Theorem 3.2]{doucfortguillin}. Without loss of generality, we can assume that $a\in(0,1)$. Setting ${\cal V}(x)=1+|x|^2$ and for a given $p\ge 1$,  $V_p={\cal V}^p$  one checks that  for some positive $\tilde{\beta}$ and $\tilde{\alpha}$,  ${\cal L}V_p\le \tilde{\beta}-\tilde{\alpha} {\cal V}^{p+a-1}=\tilde{\beta}-\tilde{\alpha} V_p^{1+\frac{a-1}{p}}$. This implies that Condition $(ii)$ of~\cite[Theorem 3.2]{doucfortguillin} holds with $\phi(s)=s^{1+\frac{a-1}{p}}$, $s\ge 1$. Furthermore, setting 
$H_\phi(s)=\int_1^s \frac{du}{\phi(u)}$, one can check that $r_*=\phi\circ H_\phi^{-1}$ satisfies $r_*(s)\sim s^{\frac{p}{1-a}-1}$ as $s\rightarrow+\infty$. Owing to the ellipticity condition which ensures the irreducibility condition $(i)$, we deduce from~\cite[Theorem 3.2]{doucfortguillin} (Eq. $(3.5)$) applied with $\Psi_1={\rm Id}$ and $\Psi_2=1$ that
$$
\|P_t(x,dy)-\nu\|_{TV}\le C \psi(x)  \ee(t),
$$
with $\ee(t)=1\wedge t^{1-\frac{p}{1-a}}$ and $\psi(x)=1+|x|^{2(p+a-1)}$. Thus, taking $p$ large enough, $\ee\in \mathbb{L}^1([0,+\infty)$ so that Condition $(i)$ of~\cref{prop:Markovsetting} holds. Owing to~\eqref{eq:condaltmark}, it is now enough to check that for any $q>0$
\begin{equation}\label{eq:condaltmarltoch}
\int_0^{+\infty}  (\E_{\mu_0}|X_t|^{q}) \ee(t) dt+\sup_{t> 0} \frac{1}{t}\int_0^t  \E_{\mu_0}|X_s|^{q}ds<+\infty.
\end{equation}
To this end, we deduce from the inequality ${\cal L}{\cal V}^p\le \tilde{\beta}-\tilde{\alpha} {\cal V}^{p+{a-1}}$ that 
$$ \frac{1}{t}\int_0^t \E_{\mu_0}[{\cal V}^{p+{a-1}}(X_s)] ds\le \mu_0 ({\cal V}^p)<+\infty$$
and from the Itô formula applied to $F(t,X_t)=\ee(t) {\cal V}^p(X_t)$ that $(S_t)_{t\ge1}$ defined by
$$ S_t:=\ee(t) {\cal V}^p(X_t)+\tilde{\alpha} \int_0^t {\cal V}^{p+{a-1}}(X_s) \ee(s) ds+\tilde{\beta} \int_t^{+\infty}\ee(s) ds,$$
is a non-negative super-martingale. This implies that $\sup_{t\ge1} \E_{\mu_0}[S_t]<+\infty$ so that 
$$\int_0^{+\infty} \E_{\mu_0}[{\cal V}^{p+{a-1}}(X_s)] \ee(s) ds <+\infty.$$
Using the fact that the above properties hold for any $p>0$,~\eqref{eq:condaltmarltoch} easily follows for any $q>0$ (taking $p$ such that $q=\frac{p+a-1}{2}$).
\begin{rem}
Note that we only proved polynomial rate of convergence to equilibrium since it is enough to prove our results. It is worth noting that~\cite{doucfortguillin} allows to get sub-exponential rates (but this requires to assume that $\mu_0$ has corresponding sub-exponential moments).
\end{rem}%\subs
\subsection{{Proof of \cref{thm:diffusionsconfluentes} and \cref{cor:A tous moments}}}\label{subsec:proof2.10-2.11}
\paragraph{{Proof of Theorem~\ref{thm:diffusionsconfluentes}.}} {$(\alpha)$}  Let $q\ge 2$. The existence of an invariant distribution, always   lying in ${\cal P}_q(\R^d)$, follows from  Hajek's criterion $({\rm \bf Haj)}_q$ applied to the Lyapunov function $|\cdot|^q$ since elementary computations based on It\^o's formula prove that ${\cal L}|\cdot|^q \le \b_q-\a_q |\cdot|^q$ for some $\alpha_q>0$ which in turn classically yields (see e.g.~\cite{EthKur}) the existence of an invariant distribution lying in ${\cal P}_q(\R^d)$. Uniqueness straightforwardly follows from the confluence properties~\eqref{eq:Lq-confluence} or~\eqref{eq:Lq-confluenceelliptic} by setting $\mu_1=\nu$ and $\mu_2=\nu'$ where $\nu$ and $\nu'$ are both invariant and lie in $L^r(\R^d)$ or $L^1(\R^d)$, hence always  in $L^1(\R^d)$. Moreover, see again~\cite{EthKur}, we have $P_t(|\cdot|^q)(x)\le e^{-\a_q t}|x|^q + \frac{\b_q}{\a_q}$ so that, if $\mu_0\!\in {\cal P}_q(\R^d)$, then $\mu_0P_t(|\cdot|^q) \le \mu_0(|\cdot|^q) +\frac{\b_q}{\a_q}$ for every $t\ge 0$.
 
 %\noindent \tcr{{\bf @Fabien: Avec la	nouvelle fct. Lyap. \c ca devient inutile ? On dit juste que c'est pareil avec $(1+|x|^2)^{q/2}$}. Assume now $q\!\in [1,2)$. Then $|\cdot|^q$ is $C^2$ only on $\R^d\setminus\{0\}$. We define $\varphi_{\ve_0}: [0,+\infty)\to [0, +\infty)$  a $C^2$ function such that  $\varphi''_{\ve_0}$ is a probability density supported by $[\ve_0/2,\ve_0]$, $\ve_0 >0$ being fixed, and $\varphi_{\ve_0} (0)= \varphi'_{\ve_0}(0)=0$. Note that $u= \varphi_{\ve}(u)+\ve -\varphi_{\ve}(\ve)$, $u\ge \ve$. Then set $V_{q,\ve_0}(x) = \varphi_{\ve_0}(|x|^q)$. The function $V_{q,\ve_0}$ is $C^2$ on $\R^d$ and one checks that $V_{q,\ve_0}$ is coercive, ${\cal L}(V_{q,\ve_0})\le \b_{q,\ve_0}-\a_{q,\ve_0}V_{q,\ve_0}$  so that an invariant distribution $\nu$ exists since such a diffusion is a Feller Markov process and any such invariant distribution satisfies $\nu(V_{q,\ve_0})<+\infty$ which clearly implies  $\nu(|\cdot|^q)<+\infty$ and $\sup_t\mu_0P_t(|\cdot|^q)<+\infty$.}
%
 {When $q\!\in[1,2)$, one proceeds likewise with a ${\cal C}^2$-function $V_q$ which satisfies $V_q(x)= |x|^{ q}$ on $B(0,1)^c$. For such a function, the fact that $\sup_{x\in \bar{B}(0,1)} {\cal L} V_q(x)<+\infty$ combined with ${\cal L}|\cdot|^q \le \b_q-\a_q |\cdot|^q$, for some $\alpha_q>0$ on $B(0,1)^c$ allows to obtain that ${\cal L}V_q \le \b'_q-\a'_q V_q$ with $\alpha'_q>0$. The sequel is then very similar to what precedes.}

 On the other hand, if we set $\mu_1= \delta_x$ and $\mu_2=\delta_y$  so that ${\cal W}_q(\mu_1,\mu_2)= |x-y|$ in~\eqref{eq:Lq-contraction} and~\eqref{eq:L1-contractionelliptic}. We can conclude that $\ATROISter$ holds true with $\psi$ and $\ee$ as above. Hence $\ATROISbis$ from Proposition~\ref{prop:markovcontraction}$(ii)$ is fulfilled in turn.
 
\noindent  {$(\beta)$}   It remains to prove  the condition~{$\LSF$} (see~\eqref{eq:LipStrgFeller}) of Proposition~\ref{prop:markovcontraction}. For that purpose we rely on the Bismuth--Elworthy--Li formula (see e.g.~\cite{GilPag2026} in a 1D setting): for every 
%\begin{prop}  \label{prop:ellipticity} Assume $d'=d$ and $\sigma$ is uniformly elliptic in the following sense
%\begin{equation}\label{eq:sigmaUellip}
%\exists\, \underline \sigma_0>0 \mbox{ such that } \forall\, x,\, \xi \!\in \R^d,Ê\quad  \xi^{\top} \sigma^{\top}\sigma (x)\xi = |\s(x)\xi|^2 \ge \underline \sigma_0^2 |\xi|^2.
%\end{equation}
bounded Borel function $f:\R^d\to \R$, $\E\, f(X^x_t)$ is differentiable as a function of $x$ and for every $t>0$, 
\begin{equation*}
%s\label{eq:BEL}
\nabla_x \E\, f(X^x_t)= \frac 1t\E\Big[ f(X^x_t) \int_0^{t}   \big\langle \s^{-1}(X^x_s)Y^{(x)}_s,dW_s\big\rangle\Big].
\end{equation*}
%where $\sigma$ is uniformly elliptic in the sense that there exists $\ve_0>0$ such that
%\begin{equation}\label{eq:sigmaUellip}
%\forall\, \xi \!\in \R^d, \quad \xi^{\top}\sigma\sigma^{\top}(x)\xi \ge \ve_0|\xi|^2.
%\end{equation}
where $(Y^{(x)}_t)_{t\ge 0}$ denotes the tangent process of $tX^x_t)$.
Then one has by It\^o's isometry and Fubini--Tonelli's Theorem
%\end{prop}
%
%\noindent {\bf Proof.}  Formula~\eqref{eq:BEL} is known as Bismuth-Elworthy-Li formula (see~\cite{GilPag2018} in a 1D setting).  As for the bound, one has
\[
|\nabla_x \E\, f(X^x_t)|^2\le \|f\|_{\sup}^2\bigg\| \int_0^{t}   \big\langle \s^{-1}(X^x_s)Y^{(x)}_s,dW_s\rangle \bigg\|^2_2=  \|f\|_{\sup}^2\bigg[\int_0^t\E\,|\s^{-1}(X^x_s)Y^{(x)}_s|^2 \bigg].
\]
Now
\begin{align*}
\E\,|\s^{-1}(X^x_s)Y^{(x)}_s|^2&= \E \,Y_s^{(x),*}(\sigma(X^x_s)^){-1}(\sigma(X^x_s))^{-1})^{\top}Y_s^{(x)}\\
&= \E\, (Y_s^{(x)})^{\top} (\sigma\sigma^{\top}(X^x_s))^{-1}Y_s^{(x)}\le \underline \sigma_0^{-2} \E \,|Y_s^{(x)}|^2
\end{align*}
from which one easily derives that
%Moreover, for every $  t>0$ 
%\textcolor{red}{@Gil  : ref. sur le flot tangent $\mathbb{L}^2$-born\'e}
\[
\sup_{x\in \R^d} |\nabla_x \,\E\, f(X^x_t)| \le C(t)\|f\|_{\infty} \quad \mbox{ with }\quad C(t)= \frac{1}{\underline \sigma_0\sqrt{t}}\sup_{0\le s\le t, \, x \in \R^d} \|Y^{(x)}_s\|_{_2}<+\infty
\]
so that {$ x\mapsto P_tf(x)= \E\, f(X^x_t) $ is Lipschitz continuous and $[P_tf]_{\rm lip} \le C(t)\|f\|_{\infty}$}. 
% the announced result.\hfill$\Box$
 
\smallskip 
\noindent  {$(\gamma)$}  It follows from the previous steps that the conclusions of Proposition~\ref{prop:markovcontraction}  hold true, hence those of Proposition~\ref{prop:Markovsetting}  so that, finally, Theorem~\ref{thm:abstrait} applies for any $t_0>0$ with  $\pi_t= \frac 12 (\bar \nu_t +\nu)$, $\b= \frac 12(1-\frac 1q)$ for $0<p<q$.   \hfill$\Box$
}

%\bigskip
%\noindent {\bf Remarks.} $\bullet$ Uniqueness of the invariant distribution $\nu$ is ensured by any of the two $L^1$-confluence and uniform ellipticity assumptions. \textcolor{red}{@G \&F: regularit\'e des coeffs dans l'uniforme ellipticit\'e quand m\^eme et bornitude de $\sigma$ ? }.
%
%\smallskip
%\noindent $\bullet$ Note that the condition  $q>d_+$ in the second and fourth  case of~\eqref{eq:boundsWasserdiff}
%is  just a necessary constraint on $q$ for the condition on $p$ to possibly  hold. Otherwise the case is empty.
% and if $p>\frac{d-1+\sqrt{(d+1)^2+4d}{4}$ then 

\paragraph{Proof of Corollary~\ref{cor:A tous moments}.}
%\tcr{@Fabien \& Gil: en fait il faudrait prendre $\varphi(|\cdot|^q)+1$ avec  $\varphi$ nulle au voisinage de $0$, croissante et \'egale \`a l'identit\'e au-del\`a de $1$ par exemple mais bon je craque un peu.}
%
Let $r\!\in [1,2]$ and $V_r(x)= (1+|x|^2)^{\frac r2}$. As a preliminary note that
\[
|x|^r \le V_r(x)\le 1+|x|^r
\]
since $u^{\frac r2}$ is Holder. One has, for every $x\!\in \R$, 
$$
 \nabla V_r(x)= r(1+|x|^2)^{\frac r2-1}x \quad \mbox{ and }\quad \nabla^2V_r(x)= r(1+|x|^2)^{\frac r2-1}\Big(I_d + (r-2)\tfrac{x}{(1+|x|^2)^{\frac 12}}\big(\tfrac{x}{(1+|x|^2)^{\frac 12}}\big)^{\top}\big).
$$

Then {one checks that}
{\begin{align*}
{\cal L}V_r(x)&= \langle \nabla V_r(x)\,|\, b(x)\rangle +\tfrac 12 {\rm Tr}\big(\sigma^{\top}(x)D^2V_r(x)\sigma(x)\big)\\
&= r(1+|x|^2)^{\frac r2-1} \Big(  \langle b(x)\,|\, x\rangle + \tfrac12 \|\sigma(x)\|^2_{_F}+ (\tfrac r2-1) \frac{|(\s(x)^{\top}x|^2}{1+|x|^2}\Big)\\
&\le \bar \kappa'-\underline \kappa' V_r(x)
\end{align*}
}
%with $\a= r\bar \kappa$ and $\b = r\underline \kappa$ 
\noindent {for some $\underline \kappa'>0$ owing to \eqref{eq:corcontsbn}. Let $\lambda>0$ . Then
\begin{equation}\label{eq:expLyap}
{\cal L}e^{\lambda V_r} \le  \lambda e^{\lambda V_r}\big({\cal L}V_r+\tfrac{\lambda}{2}  {|\sigma^{\top} \nabla V_r|^2}\big).
\end{equation}
Note that 
\begin{align*}
|\s(x)^{\top}\nabla V_r(x)|^2 \le \|\s(x)\|_{_F}^2|\nabla V_r(x)|^2&\le C_\s^2 (1+|x|)^{2-r}r^2(1+|x|^2)^{ r-2}|x|^2\\
&\le C'_{\s,r} (1+|x|^2)^{\frac r2}= C'_{\s,r}V_r(x).
%r^2C_\s^2(1+|x|)^{r}\le  2^{r-1}r^2C_\s^2V_r(x).
\end{align*}
 Choose $\lambda \!\in (0,\frac{2\underline \kappa'}{C'_{\s,r}})$. Set $\tilde \alpha = \underline \kappa'-C'_{\s,r}\tfrac{\lambda}{2}>0$ and $v= \frac 12 +\frac{ \bar \kappa'}{\tilde \alpha}$.
  Inserting the above  inequality in~\eqref{eq:expLyap}  yields 
\begin{align*}				  
 {\cal L} e^{\lambda V_r} & \le   \lambda e^{\lambda V_r} \Big( {\cal L}V_r +  C'_{\s,r} \tfrac{\lambda}{2} V_r\Big)\\
				   & \le \lambda e^{\lambda V_r}\big(\bar \kappa'-\tilde \alpha V_r\big)\\
				   & = \lambda e^{\lambda V_r}\big(\bar \kappa'-\tilde \alpha V_r \big)\mbox{\bf 1}_{\{V_r\le v\}} +\lambda e^{\lambda V_r}\big(\bar \kappa'-\tilde \alpha V_r\big)\mbox{\bf 1}_{\{V_r>v\}}\\
				   & \le \lambda\bar \kappa' e^{\lambda v}\mbox{\bf 1}_{\{V_r\le v\}}  -\tfrac {\lambda \tilde \a}{2} e^{\lambda V_r} \mbox{\bf 1}_{\{V_r>v\}}\\
				   &\le  \lambda (\bar \kappa' +\tfrac{\tilde \a}{2})e^{\lambda v}-\tfrac {\lambda \tilde \a}{2} \tilde \a e^{\lambda V_r}\\
				   & = \tilde \b' -\tilde \a'e^{\lambda V_r} \quad \mbox{with }\quad \tilde \a'>0,
\end{align*}
where we used in the fourth line that $\bar \kappa'-\tilde \a V_r\le -\frac{\tilde \a}{2}$ on $\{V_r>v\}$.
Classical arguments show that then  the invariant distribution $\nu$ has a finite exponential moment of the form $\nu(e^{\lambda V_r}) <+\infty$. Moreover for any distribution $\mu_0$ such that $\mu_0\big(e^{\lambda V_r}\big)< +\infty$, one has for every $t>0$, 
\[
\mu_0P_t e^{\lambda V_r} \le e^{\tilde \a' t} \mu_0\big(e^{\lambda V_r}\big)+\frac{\tilde \b '}{\tilde \a'}
\]
so that $\displaystyle \sup_{t\ge 0} \mu_0P_t e^{\lambda V_r} <+\infty$. In particular, for every $q\ge 1$,  $\displaystyle  \sup_{t\ge 0} \mu_0P_t |\cdot|^{q}<+\infty$. As a consequence we may apply the bounds obtained in Theorem~\ref{thm:possiblynonmarkov} with as large values $q$ as needed since $\nu(|\cdot|^q)+ \sup_{t\ge 0}\bar \nu(|\cdot|^q)<+\infty$ for every $q\ge 1$. \hfill$\Box$

%ection{SDEs driven by weak memory fractional processes}\label{sec:fractionalres}

%\subsection{McKean-Vlasov (why not??)}

%\subsection{Euler schemes}%
%J'ai l'impression que le sch\'ema d'Euler à pas d\'ecroissant ne va pas assez vite vers la proba invariante mais bon...

\section{Proofs related to~\cref{sec:fracsdes} (Gaussian and fractional driven SDEs)}\label{annexproofTVfrac}\label{sec:6}
\paragraph{Proof of~\cref{thm:fractrao}} \noindent {\sc Step 1}. 
The proof is mainly an adaptation of~\cite[Proposition 6]{ANPSieber} ({which is devoted to the fractional case $g(t)=t^{H-\frac{1}{2}}$}). We use the same notations: 
%The space ${\cal H}_H$ mentioned in~\cref{sec:fractionalres} is precisely, the closure of the space of ${\cal C}^\infty$ functions from $\ER^{-}$ to $\ER^d$ in the norm
%$$\|f\|_{{\cal H}_H}:=\sup_{s,t\le 0}\frac{|f(t)-f(s)|}{|t-s|^{\frac{1-H}{2}} \sqrt{1+|t|+|s|}}.
we set $\ell(x,w)=(\ell_t(x,w))_{t\ge0}$ the deterministic function given (when {it} makes sense) by
$$\ell_t(x,w)=x+\int_{-\infty}^0 g(t-u)-g(-u) dw_u,\quad t\ge0.$$
First, note that under $\HFBM (iii)$, $|g'(v)|\lesssim (1\vee v)^{1-\zeta}$ so that for $u<0$ and $t\ge0$, $|g(t-u)-g(-u)|\lesssim t (1\vee u)^{1-\zeta}$ which implies that
$\PE_{W^{-}}(dw)$-$a.s.$, for all $t\ge0$, for $\varepsilon\in(0,\zeta)$,
$$\limsup_{u\rightarrow-\infty} |(g(t-u)-g(-u)) w_u|\lesssim\limsup_{u\rightarrow-\infty}  u^{\frac{3}{2}-\zeta+\varepsilon} \sup_{v\ge1 } v^{-\frac{1}{2}-\varepsilon} w_v=0.$$
Then, an integration by parts combined yields: $\PE_{W^{-}}(dw)$-$a.s.$,
\begin{equation}\label{IPPeq}
\ell_t(x,w)=x+\int_{-\infty}^0 g'(t-u) w_u du, \quad t\ge0.
\end{equation}
One can check that $\PE_{W^{-}}(dw)$-$a.s.$, $t\mapsto \ell_t(x,w)$ is ${\cal C}^\infty$ on $(0,+\infty)$ and locally $\alpha$-H"older on $[0, +\infty)$ for any $\alpha<H$. In the sequel, we denote $\Cloc^{H-}(\R_+,\R^d)$ the related space. With these notations, one can check (see~\cite[{Eq. (19)}]{ANPSieber} for details) that
$$ {\cal L}(X_t|{\cal F}_s)={\cal L}(\Phi_{t-s}(\ell(x, w)))_{x=X_s,w=(W_{u+s})_{u\le0}},$$
where  for a given deterministic path  $\ell\in\Cloc^{H-}(\R_+,\R^d)$,  $\Phi_t(\ell)$ denotes the unique  solution to
\begin{equation}\label{eq:innovation_sde}
  \Phi_t(\ell)=\ell(t)+\int_0^t b\big(\Phi_s(\ell)\big)\,ds+\sigma\tilde{G}_t,\quad t\geq 0.
\end{equation}
Note that existence and uniqueness easily follow from the Lipschitz assumption on $b$. By disintegrating the invariant distribution $\Pi$ (with marginal $\nu$) of the Markov process $Z$, we have for a given bounded measurable function $h:\ER^d\rightarrow[0, +\infty)$.
$$
\E[h(X_t)|{\cal F}_s]-\nu(h)=\Psi_h(X_s,(W_{u+s})_{u\le0}),
$$
where
$$
\Psi_h(x,w)=\int \E[h(\Phi_{t-s}(\ell(x, w)))-h(\Phi_{t-s}(\ell(y, \tilde{w})))]\Pi(dy,d\tilde{w}).
$$
This implies that
\begin{equation}\label{eq:tvcondpeiozp}
\hskip-1cm \|{\cal L}(X_t|{\cal F}_s)-\nu\|_{TV}\le \int \|{\cal L}(\Phi_{t-s}(\ell(x, w)))_{x=X_s,w=(W_{u+s})_{u\le0}}-{\cal L}(\Phi_{t-s}(\ell(y, \tilde{w})))\|_{TV}\Pi(dy,d\tilde{w}).
\end{equation}
This thus suggests to exhibit some bounds for
$$ \|{\cal L}(\Phi_{t}(\ell(x, w)))-{\cal L}(\Phi_{t}(\ell(y, \tilde{w})))\|_{TV},$$
for any $(x,w)$, $(y,\tilde{w})$. In other terms, the aim is to study the behavior of the dynamics {conditioned to their past before time $0$}.  To this end, the idea is to first obtain $L^1$-bounds at time $t$ (with a synchronous coupling) and then to deduce TV-bounds from a final coalescent coupling (which succeeds with high probability in view of the first part). These two parts correspond to Steps $2$ and $3$ below.

\noindent {\sc Step 2} ($L^1$-bounds). This part is an adaptation of~\cite[Corollary 2]{ANPSieber}. The two main points are the following. First, if we assume that 
$g(t)=t^{H-\frac{1}{2}}$ on $(0,1]$, Lemma 1 and Proposition 5 of~\cite{ANPSieber} are not modified. When $t_0\neq 1$, this only involves to replace   the integer subdivision in the proof of~\cite[Proposition 2]{ANPSieber} by a subdivision $\{ kt_0,k\in\mathbb{N}\}$. We choose to leave these details to the reader. Second, in the adaptation of the proof of~\cite[Corollary 1]{ANPSieber}, the main point is to deduce from~\eqref{IPPeq} that, for any $t>0$, 
$$
\dot{\ell}_t(x,w)-\dot{\ell}_t(y,\tilde{w})=\int_{\infty}^0 g''(t-u) (\tilde{w}_u-w_u) du,
$$
so that by $\HFBM (iii)$, for $\varepsilon$ small enough,
\begin{align*}
|\dot{\ell}_t(x,w)-\dot{\ell}_t(y,\tilde{w})|&\lesssim \|\tilde{w}-w\|_{\infty,[-1,0]}+\sup_{u\le -1} \frac{|\tilde{w}_u-w_u|}{u^{\frac{1}{2}+\varepsilon}} \int_{-\infty}^{-1} |t-u|^{-\zeta} |-u|^{\frac{1}{2}+\varepsilon} du\\
&\lesssim \mathfrak{C}_\varepsilon(\tilde{w}-w) \frac{t^{-\zeta+\frac{3}{2}+\varepsilon}}{\zeta-\frac{3}{2}-\varepsilon}\quad\textnormal{with}\quad \mathfrak{C}_\varepsilon(w)=\sup_{u\in\ER_{-}} \frac{|w_u|}{(1\vee u)^{\frac{1}{2}+\varepsilon}}.
\end{align*}
A careful reading of the proof of~\cite[Corollary 2]{ANPSieber} then leads to: for any (small) $\varepsilon>0$,
\begin{equation*}
      \E[|\Phi_{t}(\ell(x,w)) - \Phi_{t}(\ell(y,\tilde{w}))|]\le C_\varepsilon\left(  e^{-c t}|x-y|+ \mathfrak{C}_\varepsilon(w-\tilde{w}) t^{-\zeta+\frac{3}{2}+\varepsilon}\right),
        \end{equation*}
        where $c$ is a positive constant (independent of $\varepsilon$). 
        
\noindent {\sc Step 2.} (TV-Bounds). Here, this is an adaptation of~\cite[Proposition 6]{ANPSieber}. Once again, since we assume that $g(t)=t^{H-\frac{1}{2}}$ on $(0,t_0]$, one can check that, at the price of sticking the paths at time $t_0$ instead of $1$, the strategy still works and the result only differs by the previous Wasserstein bounds. A careful reading leads to: for any $\varepsilon>0$, there exists $C_\varepsilon>0$ such that for any $t\ge 0$, 
\begin{equation}\label{eq:TVbounddklqjq}
 \|{\cal L}(\Phi_{\tau}(\ell(x,w)) - {\cal L}(\Phi_{\tau}(\ell(y,\tilde{w})))\|_{{\rm TV}}\le C_\varepsilon \left(e^{-c t}|x-y|+{\mathfrak{C}_\varepsilon(w-\tilde{w})} {(1\vee t)}^{-\zeta+\frac{3}{2}+\varepsilon}\right).
 \end{equation}
In other words, the cost for sticking the paths does not modify the orders of convergence.

\smallskip
\noindent We are now ready to conclude the proof by plugging~\eqref{eq:TVbounddklqjq} into~\eqref{eq:tvcondpeiozp}. This yields: for any $\varepsilon>0$, there exists $C_\varepsilon>0$ such that for any $0\le s\le t $,
\begin{equation*}
\|{\cal L}(X_t|{\cal F}_s)-\nu\|_{TV}\le C_\varepsilon   \left(C_1(X_s) e^{-c (t-s)}+ C_2((W_{u+s})_{u\le 0})(1\vee(t-s)^{-\zeta+\frac{3}{2}+\varepsilon}\right),
\end{equation*}
with 
$$ 
C_1(x)=\int |x-y| \nu(dy)\quad C_2(w)= \E[{\mathfrak{C}_\varepsilon(w-W^{-})}].
$$
It is classical to check that  that $\int |y|^q\nu(dy)<+\infty$ for all $q\ge1$  and that $\sup_{t\ge0} \E[|X_t|^q]<+\infty$ as soon as $\int |y|^q \bar{\mu}(dy)<+\infty$ (see~\cite[Proposition 3.12]{Hairer2005} {for the fBm and~\cite[Proposition A.4]{panloup-richard} for the general case}). Furthermore, the fact that $\mathfrak{C}_\varepsilon(W^{-})$ has moments of any order for any $\varepsilon>0$ is also a classical property of the Brownian motion. The same property easily follows for $C_2\big((W_{u+s})_{u\le 0}\big)$ by the triangle inequality} {and leads to the announced result.
%
 %   
%Precisely, this leads to: for $\ell_1,\ell_2\in{\cal C}^1(\ER_,\ER^d)$, 
%$$\E[|\Phi_{t}(\ell_1) - \Phi_{t}(\ell_2)|^2]\leq C\left(e^{-c t}\E[|\Phi_{t_0}(\ell_1) - \Phi_{t_0}(\ell_2)|^2] +\sum_{k=1}^{\lfloor\frac{t}{t_0}\rfloor}\rho^{\frac{t}{t_0}-k}\|{\dot{\ell}_1-\dot{\ell}_2}\|_{\infty;[k,(k+1)]}^2\right).
%
% \E[|\Phi_{\tau}(\ell(x,w)\TCR{)} - \Phi_{\tau}(\ell(y,\tilde{w}))|^2]\lesssim_{\varepsilon}  e^{-c\tau}|x-y|^2+ \TCR{\frC(w-\tilde{w},\epsilon))^2}\TCR{(1\vee\tau)}^{2H-2+\varepsilon}.
%        \end{equation*}
% and is based on the following approach. Set $\tilde{t}_0=t_0\wedge 1$. Let $t\ge0$.  To get $TV$-bounds at time $t+\tillde{t}_0$,  use a synchronous coupling from  $0$ to $t$ in order to obtain Wasserstein bounds and on $[t,t+\tilde{t}_0]$, build a coupling which ensure that the paths stick at time $t+\tilde{t}_0$. The basic idea is that when the paths are close at time $t$, then the cost to stick the paths may be weak. Let us first recall how to obtain Wasserstein bounds at time $t$.
%First, let us remark that 

%
%one uses synchronous coupling which lead to Wasserstein bounds and from $t$ to 
%For $\ell\in\Cloc^{H-}(\R_+,\R^d)$, we will denote by $\Phi_t(\ell)$ a  solution to
%\begin{equation}\label{eq:innovation_sde}
%  \Phi_t(\ell)=\ell(t)+\int_0^t b\big(\Phi_s(\ell)\big)\,ds+\sigma\tilde{B}_t,\qquad t\geq 0,
%\end{equation}
%
%}
\paragraph{Proof of~\cref{prop:TVfrac}}
The strategy of proof is completely similar to that of~\cref{thm:fractrao} but in the simpler case where the pasts before time $0$ are equal. More precisely, in this case,~\eqref{eq:tvcondpeiozp} is replaced by:
\begin{equation}
\|{\cal L}(X_t)-\nu\|_{TV}\le \int \|{\cal L}(\Phi_{t}(\ell(x, w))-{\cal L}(\Phi_{t}(\ell(y, w))\|_{TV}\Pi(dy,d{w}).
\end{equation}
Thus, inserting the bound~\eqref{eq:TVbounddklqjq} in the special case where $w=\tilde{w}$ leads to an exponential bound. For more details, we refer to the original proof by~\cite{sieber-li}.

\paragraph{Proof of~\cref{thm:specou}}
Let $(Y_t)_{t\ge0}$ denote a stationary solution. Then, for each $t\ge0$, $Y_t$ is centered and  ${\rm Var}(Y_t)=\sigma_H^2$. Furthermore, by~\cite[Theorem 2.3]{cheridito03}, for any $0\le s\le t$, with $t-s\ge 1$, 
$${\rm Cov}(Y_s,Y_t)=\frac{\sigma^2H(2H-1)}{\lambda^2} (t-s)^{2H-2}+{\cal O}((t-s)^{2H-4}).$$
In particular, there exists $C_1$ such that for any $(s,t)$ with $t-s\ge C_1$,
$${\rm Cov}(Y_s,Y_t)\le C (t-s)^{2H-2}\le 1/2.$$
 By~\cref{lem:gaussian}, for any bounded Borel function $f$,
$$|{\rm Cov}(f(Y_s),f(Y_t))|\le 2 {\rm Var}_\nu(f) \sigma_H^{-2} |t-s|^{2H-2}.$$
Thus, if $H<1/2$, for any $t\ge 1$,
\begin{align*}
\E[|\nu_t(A)-\nu(A)|^2]&\le \frac{C}{t^2}\int_0^t \int_s^{t} {\rm Cov}({\bf 1}_A(Y_s),{\bf 1}_A(Y_u))du ds\\
&\le  \frac{C{\rm Var}_\nu({\bf 1}_A)}{t^2}\int_0^t \int_s^{t} 1\wedge |u-s|^{2H-2} du ds\le \frac{C\nu(A)}{{t^{(2-2H)\wedge 1}}}.
\end{align*}
As a consequence, $\AUNgamma$ holds with $\pi_t=\nu$, $\beta=1/2$ and $\gamma=\frac{1}{2}$ if $H<1/2$ and $\gamma=1-H$ if $H>1/2$.
\hspace{7cm} \hfill $\Box$

\begin{lem}\label{lem:gaussian}
Let $(U,V)$ be an $\ER^2$-valued centered Gaussian variable such that $\sigma_U=\sigma_V$. Then, if $|{\rm Cov}(U,V)|\le 1/2$, we have for every Borel functions $f:\ER\mapsto\ER$ and $g:\ER\mapsto\ER$ such that $\E[f^2(U) ]\vee\E[g^2(U)]<+\infty$,
$$|{\rm Cov}(f(U),g(V))|\le 2 \sqrt{{\rm Var}(f(U)){\rm Var}(g(U))} \sigma_U^{-2} |{\rm Cov}(U,V)|.$$
\end{lem}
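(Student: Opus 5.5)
The natural route is the Hermite (Mehler) expansion. Normalize: set $\sigma:=\sigma_U=\sigma_V$, $\tilde U=U/\sigma$, $\tilde V=V/\sigma$, so that $(\tilde U,\tilde V)$ is a standard bivariate Gaussian with correlation $\rho={\rm Cov}(U,V)/\sigma^2$. Replace $f$ and $g$ by $\tilde f(x)=f(\sigma x)$ and $\tilde g(x)=g(\sigma x)$. Both lie in $\mathbb{L}^2(\gamma_1)$, where $\gamma_1$ is the standard Gaussian measure, since $\E[f^2(U)]<+\infty$ and $V\stackrel{d}{=}U$.

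Expand in normalized Hermite polynomials $(h_k)_{k\ge0}$, which form an orthonormal basis of $\mathbb{L}^2(\gamma_1)$:
\[
\tilde f=\sum_{k\ge0}a_kh_k,\qquad \tilde g=\sum_{k\ge0}b_kh_k .
\]
Then $a_0=\E f(U)$ and $\sum_{k\ge1}a_k^2={\rm Var}(f(U))$, and similarly for $g$, whose variance may be taken under $U$ or $V$ since the laws coincide. The key identity is the Mehler formula
\[
\E[h_k(\tilde U)h_m(\tilde V)]=\rho^k\,\mbox{\bf 1}_{\{k=m\}}.
\]
It follows from the generating function $e^{sx-s^2/2}=\sum_k \frac{s^k}{\sqrt{k!}}h_k(x)$ and the direct computation $\E\big[e^{s\tilde U-s^2/2}e^{r\tilde V-r^2/2}\big]=e^{sr\rho}$.

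Next justify the bilinear expansion. The partial sums converge in $\mathbb{L}^2$ of each marginal, and $(x,y)\mapsto \E[\,\cdot\,]$ is continuous for products of $L^2$ functions of $\tilde U$ and $\tilde V$ by Cauchy--Schwarz. This gives
\[
{\rm Cov}(f(U),g(V))=\sum_{k\ge1}a_kb_k\rho^k .
\]
Since $|\rho|\le1$, bound this by
\[
|\rho|\sum_{k\ge1}|a_k||b_k||\rho|^{k-1}\le|\rho|\Big(\sum_{k\ge1}a_k^2\Big)^{1/2}\Big(\sum_{k\ge1}b_k^2\Big)^{1/2}=|\rho|\sqrt{{\rm Var}(f(U)){\rm Var}(g(U))}.
\]
Substituting $|\rho|=\sigma_U^{-2}|{\rm Cov}(U,V)|$ yields the claimed inequality, in fact with constant $1$ instead of $2$.

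The hypothesis $|{\rm Cov}(U,V)|\le1/2$ then serves only as a safety margin; with the normalization as stated it does not even force $|\rho|\le 1/2$, but $|\rho|\le1$ is automatic from Cauchy--Schwarz and suffices. The only delicate step is the $L^2$ justification of interchanging the expectation with the double series. I would handle it by truncating at $N$, noting that the truncation error is at most $\|\tilde f-\tilde f_N\|_{L^2(\gamma_1)}\|\tilde g\|_{L^2(\gamma_1)}$ plus the symmetric term, and letting $N\to\infty$. The degenerate case $\sigma_U=0$ is excluded implicitly, since the right-hand side involves $\sigma_U^{-2}$.
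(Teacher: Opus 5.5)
Your proof is correct and is essentially the paper's argument: reduce to $\sigma_U=\sigma_V=1$, expand in the orthonormal Hermite basis, use the diagonal identity $\E[h_k(\tilde U)h_m(\tilde V)]=\rho^k\mathbf{1}_{\{k=m\}}$ (the paper cites Peccati--Taqqu for it, whereas you derive it from the generating function), and conclude by Cauchy--Schwarz. Your last step is slightly sharper: bounding $|\rho|^{k-1}\le 1$ gives constant $1$ without using $|{\rm Cov}(U,V)|\le 1/2$, whereas the paper's factor $2$ relies on $|\rho|\le 1/2$ --- which, as you correctly note, does not follow from the stated hypothesis after normalization when $\sigma_U<1$, so your version also removes that small mismatch.
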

\begin{proof}
%[Proof of~\cref{lem:gaussian}]
First, note that if the statement is true in the case $\sigma_U=\sigma_V=1$, then setting $\tilde{f}(u)=f(\sigma_U u)$, $\tilde{g}(u)=g(\sigma_U u)$, $\tilde{U}=\sigma_U^{-1}U$ and $\tilde{V}=\sigma_V^{-1}V$, we get
\begin{align*}
{\rm Cov}(f(U),g(V))={\rm Cov}(\tilde{f}(\tilde{U}),\tilde{g}(\tilde{V}))&\le 2 \sqrt{{\rm Var}(\tilde{f}(\tilde{U})){\rm Var}(\tilde{g}(\tilde{U}))} {\rm Cov}(\sigma_U^{-1}U,\sigma_V^{-1}V)\\
&\le 2\sqrt{{\rm Var}(f(U)){\rm Var}(g(U))}  \sigma_U^{-2}{\rm Cov}(U,V).
\end{align*}
We thus now assume that $\sigma_U=\sigma_V=1$. Let $(\bar{H}_k)_{k\ge1}$ denote the Hermite orthonormal basis of $\mathbb{L}^2(\ER,\gamma)$ where $\gamma$ denotes the standard normal distribution. For any $h\in \mathbb{L}^2(\ER, \gamma)$,  $ h-\int h d\gamma=\sum_{k\ge1} \langle h, \bar{H}_k\rangle_\gamma \bar{H}_k$ so that
$$
{\rm Cov}(f(U),g(V))=\sum_{k,\ell\ge1} \langle f-\int f d\gamma, \bar{H}_k\rangle_\gamma\langle g-\int g d\gamma, \bar{H}_\ell \rangle_\gamma \E[\bar{H}_k(U)\bar{H}_\ell(V)].
$$
By~\cite[Corollary 8.1.4]{peccati-taqqu} ({which is written for $H_k=\sqrt{k!} \bar{H}_k$}),  we deduce that
$$
{\rm Cov}(f(U),g(V))=\sum_{k\ge1}\langle f-\int f d\gamma, \bar{H}_k\rangle_\gamma\langle g-\int g d\gamma, \bar{H}_k\rangle_\gamma{\rm Cov}(U,V)^k.
$$
The result follows from Cauchy-Schwarz inequality and the fact that $|{\rm Cov}(U,V)|\le 1/2$.
\end{proof}

 \noindent \textbf{Acknowledgements.}  The first author benefited for this research of the support of the ``Chaire Risques Financiers'', Fondation du Risque.
 The second author benefited of the support
the Henri Lebesgue Center (ANR-11-LABX-0020-01) and the ANR
project RAWABRANCH (ANR-23-CE40-0008). The second author also thanks Julian Sieber for fruitful discussions (about the application to stationary Gaussian processes).

\small
\bibliographystyle{alpha}
\bibliography{bib_VTergo}
\end{document}